\newtheorem{theorem}[subsection]{Theorem}
\newtheorem{proposition}[subsection]{Proposition}
\newtheorem{lemma}[subsection]{Lemma}
\newtheorem{corollary}[subsection]{Corollary}
\newtheorem{definition}[subsection]{Definition}
\newtheorem{example}[subsection]{Example}
\newtheorem{remark}[subsection]{Remark}
\newtheorem{notes}[subsection]{Notes}
\newtheorem{problem}{Problem}
\begin{document}

\newcommand{\5}{\vskip 5pt}
\newcommand{\tf}{II$_1$ factor }
\newcommand{\vg}{$vN(\Gamma)$\xspace}
\newcommand{\vgm}{vN_\omega(\Gamma)}
\newcommand{\pslr}{$PSL_2(\mathbb R)$}
\newcommand{\pslz}{$PSL_2(\mathbb Z)$}
\newcommand{\bh}{{\mathcal B}({\mathcal H})}
\newcommand\bk{{\mathcal B}({\mathcal K})}
\newcommand\h{{\mathcal H}}             
\newcommand\kh{{\mathcal K}}            

\newcommand\2{\hspace{2pt}}
\newcommand\1{\hspace{1pt}}
\newcommand\slim{s\hspace{-3pt}-\hspace{-3pt} lim \hspace{2pt}}
\newcommand\wlim{w\hspace{-3pt}-\hspace{-3pt} lim \hspace{2pt}}
\newcommand\vn{von Neumann algebra\xspace}       
\newcommand\mt{$M_2(\mathbb C)$}
\newcommand\Dh{\Delta^{1/2} }
\newcommand\bc{\langle M,e_N\rangle}
\newcommand{\Mod}[1]{\ (\mathrm{mod}\ #1)}

\def\hpic #1 #2 {\mbox{$\begin{array}[c]{l} \epsfig{file=#1,height=#2}
\end{array}$}}
 
\def\vpic #1 #2 {\mbox{$\begin{array}[c]{l} \epsfig{file=#1,width=#2}
\end{array}$}}

\def\mypic #1 #2 #3{\mbox{\hspace {#1} $\begin{array}[c]{l} \epsfig{file=#2,width=#3}
\end{array}$}}
\title{ Bergman space zero sets, modular forms, von Neumann algebras\\ and ordered groups.}
\author{Vaughan F. R. Jones}
\thanks{}

\begin{abstract} $A^2_{\alpha}$ will denote the weighted $L^2$ Bergman space.
Given a subset $S$ of the open unit disc we define
$\Omega(S)$ to be the infimum of $\{s| \exists f \in A^2_{s-2}, f\neq 0,  \mbox{ having $S$ as its zero set} \}$. 
 By classical results on Hardy space
there are sets $S$ for which $\Omega(S)=1$. Using von Neumann dimension techniques and cusp forms we give examples of $S$ where $1<\Omega(S)<\infty$. By using a left order on certain Fuchsian groups we are able
to calculate $\Omega(S)$ exactly if $\Omega (S)$ is the orbit of a Fuchsian group. This technique also allows us
to derive in a new way well known results on zeros of cusp forms and indeed calculate the whole algebra of
modular forms for \pslz.

\end{abstract}

\maketitle

\section{Introduction}

The weighted Bergman (Hilbert) spaces $A^2_\alpha$, for $\alpha >-1$ are the spaces of holomorphic functions in the unit disc $\mathbb D$
which are square integrable with respect to the measure $(1-r^2)^\alpha rdrd\theta$ in $\mathbb D$. Obviously 
$A^2_\alpha\subseteq A^2_\beta$ for $\alpha\leq \beta$. The Hardy
space $H^2(\mathbb D)$ (see \cite{zhu}) is contained in all the Bergman spaces. Given a complex valued function $f$ on a set $X$ 
we let $Z_f$ be its zero set,$$Z_f=\{x\in X|f(x)=0\}.$$
We are interested in $Z_f$ for $f\neq 0$ in Bergman space. The case of Hardy space is completely understood  (
see \cite{duren}, apparently by Sz\"ego in about 1915): 
An (obviously countable) subset $\{z_n|n=1,2,\cdots\}$ of $\mathbb D$ is $Z_f$ for some $f\in H^2\setminus\{0\}$ iff $$ \sum_n (1-|z_n|)<\infty.$$ Thus any such set is $Z_f$ for some $f\in A^2_\alpha$ for all $\alpha>-1$ but many other sets
may be zero sets and a similar characterisation for Bergman space seems out of reach.

So for a subset $S\subset \mathbb D$ we let $$X_S=\{s| \exists f \in A^2_{s-2}, f\neq 0, S\subseteq Z_f\} $$
Then set $\Omega(S)=\begin{cases} \infty & \mbox{  if } X_S=\emptyset \\
                                                           \inf(X_S)& \mbox{ otherwise}
\end{cases}$.

                 (The shift in values, $s=\alpha+2$ is because for Bergman space the reference measure is 
                 Lebesgue measure in the disc i.e. $\alpha=0$, whereas for cusp forms and von Neumann algebras 
                 the reference measure is hyperbolic area i.e. $s=2$.)                                          
                                            
In \cite{hkz}, $\Omega(S)$ is determined in terms of a notion of ``density'' of points in $S$ which might be difficult to
calculate.  In this paper we 
will show that some progress can be made when $S$ is the orbit of a Fuchsian group $\Gamma$ acting on $\mathbb D$,
effectively calculating the ``density'' of these orbits. 

One can ask whether $\Omega(S)= \inf\{s| \exists f \in A^2_{s-2}, f\neq 0, S=Z_f\}$. Every subset of the zero set of an 
ordinary (unweighted) Bergman space function is the zero set of another function (\cite{horowitz}) but it is
unknown whether this is true for weighted Bergman spaces. However the  result of \cite{hkz} mentioned above makes it 
clear that the only value of $s$ for which it is unknown is  precisely  $\inf\{s| \exists f \in A^2_{s-2}, f\neq 0, S=Z_f\}$ so this is the same as $\Omega(S)$.

Our techniques make particular use of von Neumann algebras as inspired by Atiyah in \cite{a}, both for
the existence and non-existence questions. 
Fuchsian groups act unitarily (projectively) on the Bergman spaces in such a way that they generate what is
known as a II$_1$ factor $M$ (which depends on $s$ and $\Gamma$). 
But these techniques do not so far allow us to get information  on
$Z_f$ itself so for results on $\Omega(S)$ we use modular forms whose zero sets are explicitly known.

Another key idea was proposed by Curt McMullen - that is to exploit the Bergman reproducing kernel 
vectors $\varepsilon_z$ satisfying $\langle \varepsilon_z,f\rangle=f(z)$ for $f$ in a Bergman space. Applying $\Gamma$
(hence $M$) to an $\varepsilon_z$ gives an $M$-module which has a von Neumann dimension which may be
compared to the von Neumann dimension of the Bergman space itself. Standard von Neumann results then lead to
an upper bound for $\Omega(\Gamma(z))$. 
To obtain a lower bound involves showing the existence of a "trace vector" for $M$ and for this we introduce what 
appears to be a new technique.
Every Fuchsian group $\Gamma$ contains a  left orderable subgroup $\Psi$ of finite index \cite{HKS}
($\Gamma$ is either a free product of cyclic 
groups or has a surface group of finite index). To each orbit of a left ordered $\Gamma$ we produce a trace 
vector for \vg which acts on $A^2_{\alpha}$. This produces lower bounds on von Neumann dimension.

By $covolume(\Gamma)$ in this paper we will mean the hyperbolic area (constant curvature $=-1$) of the
quotient space $\mathbb H/\Gamma$.
The simplest version of our main results is the following:

\begin{theorem} \label{ultimate}Let $\Gamma$ be a Fuchsian group and $\Gamma(z)$ be an orbit in $\mathbb H$
containing no fixed points for any element of $\Gamma$. Then
there is a non-zero function in $A^2_\alpha$  vanishing on $\Gamma(z)$ iff $$\alpha>\frac{4\pi}{covolume(\Gamma)}-1$$
\end{theorem}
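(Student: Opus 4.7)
The plan is to reformulate ``$f\in A^2_{s-2}$ vanishes on $\Gamma(z)$'' as a Hilbert space orthogonality statement and then read the answer off from von Neumann dimension. Write $s=\alpha+2$. Then \pslr acts projectively on $A^2_{s-2}$ via the discrete series of parameter $s$; restricting to $\Gamma$ generates a \tf $M=vN(\Gamma)$ acting on $A^2_{s-2}$. The reproducing kernel identity $f(w)=\langle f,\varepsilon_w\rangle$ turns ``$f$ vanishes on $\Gamma(z)$'' into $f\perp\{\varepsilon_{\gamma z}:\gamma\in\Gamma\}$, and because $\varepsilon_{\gamma z}$ is a unimodular scalar multiple of $\pi(\gamma)\varepsilon_z$, this is the same as $f\perp \overline{M\varepsilon_z}$. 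Hence a non-zero such $f$ exists if and only if $\overline{M\varepsilon_z}$ is a proper $M$-submodule of $A^2_{s-2}$.

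The first input is Atiyah's $L^2$-index formula for discrete series restricted to a lattice, which gives
\[
\dim_M A^2_{s-2}\;=\;\frac{(s-1)\,covolume(\Gamma)}{4\pi},
\]
whose right-hand side exceeds $1$ precisely when $\alpha>4\pi/covolume(\Gamma)-1$. The easy (``if'') direction then follows because a cyclic $M$-module has $M$-dimension at most $1$: under this hypothesis $\overline{M\varepsilon_z}$ is automatically a proper submodule, so its orthogonal complement supplies a non-zero $f$ vanishing on $\Gamma(z)$.

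For the converse I need the matching lower bound $\dim_M\overline{M\varepsilon_z}=1$, i.e.\ that $\varepsilon_z$ is a trace vector for $M$. Following the program outlined in the introduction, the plan here is to pass to a left-orderable subgroup $\Psi\leq\Gamma$ of finite index (which exists by \cite{HKS}), pick a left-invariant total order on $\Psi$, and use that order to put every element of $vN(\Psi)$ into a canonical ``normal form'' adapted to the order in which the coefficient of $e$ reads off as $\omega_{\varepsilon_z}$. The no-fixed-points assumption makes the translates $\pi(\gamma)\varepsilon_z$ ($\gamma\in\Psi$) linearly independent in the requisite strong sense, so that this expansion is unique and the positivity built into the ordering lets one identify the vector state $\omega_{\varepsilon_z}$ with the canonical trace on $vN(\Psi)$. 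The trace property is then transported from $vN(\Psi)$ up to $M=vN(\Gamma)$ through the index $[\Gamma:\Psi]$ inclusion, which is compatible with the dimension calculation above.

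The main obstacle is exactly this trace-vector step. Upper bounds on $\dim_M$ of cyclic modules are automatic, but the lower bound $\dim_M\overline{M\varepsilon_z}\geq 1$ requires genuine positivity of the form $\omega_{\varepsilon_z}(m^*m)=\tau(m^*m)$, and it is not at all clear a priori why the reproducing kernel at a generic orbit point should provide such a vector. The left order is meant to supply this positivity by controlling the off-diagonal inner products $\langle\pi(\gamma)\varepsilon_z,\pi(\delta)\varepsilon_z\rangle$ through a one-sided expansion. Once the trace vector is in hand, the boundary case $\alpha=4\pi/covolume(\Gamma)-1$ follows automatically: there $\dim_M A^2_{s-2}=1=\dim_M\overline{M\varepsilon_z}$ forces $\overline{M\varepsilon_z}=A^2_{s-2}$, so no non-zero $f$ can vanish on $\Gamma(z)$.
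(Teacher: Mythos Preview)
Your ``if'' direction is correct and is exactly the paper's argument (Proposition~\ref{sample}). The ``only if'' direction, however, rests on a claim that cannot hold: $\varepsilon_z$ is \emph{never} a trace vector for $M$. For the group von Neumann algebra of an icc group, being a trace vector is the same as being a wandering vector, i.e.\ $\langle\pi_s(\gamma)\varepsilon_z,\varepsilon_z\rangle=0$ for all $\gamma\neq id$. But this inner product is a unimodular scalar times $\varepsilon_z(\gamma^{-1}z)$, and reproducing kernels never vanish. So no ordering argument on $\Psi$ can force the vector state at $\varepsilon_z$ to equal the trace, and your sketch of a ``normal form'' for elements of $vN(\Psi)$ cannot deliver what you ask of it. (There is also a conflation here: $\dim_M\overline{M\varepsilon_z}=1$ is not equivalent to ``$\varepsilon_z$ is a trace vector''; cyclic plus dimension one gives separating, not tracial.)

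The paper's mechanism for the converse is quite different and never tries to analyse $\overline{M\varepsilon_z}$. One assumes a nonzero $f\in A^2_{s-2}$ vanishes on $\Gamma(z)$, passes to a left-orderable $\Psi\leq\Gamma$ of index $n$, and uses the free-orbit hypothesis to split $\Gamma(z)$ into $n$ disjoint $\Psi$-orbits with basepoints $z_1,\dots,z_n$. Fixing a left order $<$ on $\Psi$, set
\[
U=\{\xi:\xi(\gamma z_i)=0\ \text{for all }i\text{ and all }\gamma\le id\},\qquad
V=\{\xi:\xi(\gamma z_i)=0\ \text{for all }i\text{ and all }\gamma< id\},
\]
and check directly from left-invariance of $<$ that $W=U^\perp\cap V$ is a \emph{wandering subspace} for $\Psi$. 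Dividing $f$ by linear factors at the $z_i$ (Lemma~\ref{popping}, ``popping zeros'') shows $\dim W\ge n$; moreover $\pi_s(\gamma)f\in U\subseteq W^\perp$ for every $\gamma$. Thus the $vN_\omega(\Psi)$-span of $W$ already has $\Psi$-dimension $n$, with $f$ orthogonal to it, forcing $\dim_{vN_\omega(\Psi)}A^2_{s-2}>n$, i.e.\ $\dim_{vN_\omega(\Gamma)}A^2_{s-2}>1$. The trace vectors that carry the argument are elements of $W$, manufactured from the hypothetical $f$ and the order, not from the reproducing kernel; this is also how the strict inequality at the boundary value is obtained.
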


This establishes that $\displaystyle \Omega(\Gamma(z))=\frac{4\pi}{covolume(\Gamma)}+1$ for such groups and since the proof of existence of the appropriate Bergman space functions
 does not require a left ordering we get  $\displaystyle\Omega(\Gamma(z))\leq \frac{4\pi}{covolume(\Gamma)}+1$ for all $\Gamma$.



The value $\displaystyle\frac{4\pi}{covolume(\Gamma)}+1$ is obviously special and there is a good reason for this.   It is the
value for which the von Neumann dimension $\dim_{\mbox{\vg}}(A^2_{\alpha})$ is equal to $1$ meaning there is a ``cyclic and separating trace vector'' for \vg in the Hilbert space, and hence an antiisomorphism between \vg and its commutant
on $A^2_{\alpha}$. Now in \cite{radulescu2}, Radulescu has shown that the commutant \vg\hspace {-4pt}$'$ is always generated in some sense by 
cusp forms which thus give a model for \vg\hspace {-4pt}$'$. And Voiculescu in \cite{voic} has shown that, at least for groups like \pslz ,
\vg  has a random matrix model. Thus there exists a random matrix model for cusp forms. This is a theorem, but it is of
little use unless one can lay one's hands on an explicit and mangeable cyclic and separating trace vector with
which to implement the anti-isomorphism with the commutant. If  one did have such a vector one
might be able to prove some of the numerically well established relations between random matrices and modular
forms (\cite{keating}). Indeed this was the motivation for the research that led to the results of this paper. 

Interestingly though,  our main
theorem shows that  when $\dim_{\mbox{\vg}}(A^2_{\alpha})=1$, a trace vector, although it exists, can never
be obtained by the left order method of this paper, whereas for all other values of $\alpha$ (for which there
is a trace vector) the left order method works, starting with a cusp form!

We would like to end the introduction by making quite clear what  issue  this paper brings to light.

Fix a Fuchsian group $\Gamma<PSL(2,\mathbb R)$ and let $\gamma\mapsto u_\gamma$ be its 
unitary projective action on $A^2_{s-2}$. An element $\xi\in A^2_{s-2}$ is called \\ \underline{wandering} for
$\Gamma$ iff $$\langle u_\gamma\xi,\xi\rangle=0 \mbox{ for all  }\gamma\in \Gamma, \gamma\neq id,$$ and
\\ \underline{tracelike} if, up to a multiplicative constant, $$  \sum_{\gamma\in\Gamma}\frac{|\xi(z)|^2}{|cz+d|^{2s}}=Im(z)^{-s} $$.
\begin{problem} It follows from von Neumann dimension that if $\displaystyle s=\frac{4\pi}{covolume(\Gamma)}+1$, there is
a nonzero function in $A^2_{s-2}$ that is both wandering and tracelike.\\ Find such a function.
\end{problem}

A first step might be to find a direct (non von Neumann algebraic) proof of the equivalence of the wandering
and tracelike conditions when $\displaystyle s=\frac{4\pi}{covolume(\Gamma)}+1$. For other values of $s$ they
are mutually exclusive.

\section*{Acknowledgements} This research would not have happened without a prolonged interaction with
Curt McMullen whose advice and ideas were of inestimable value. I have also benefited from discussions
with Larry Rolen, Ian Wagner,  Florin Radulescu, Haakan Hedenmalm, Dan Freed, Terry Gannon and Gaven Martin.

\section{Background in von Neumann algebras.}

We begin the paper by giving an account of von Neumann dimension, and self-contained calculations of some von Neumann dimensions which are slightly different from the
calculations of \cite{ghj},\cite{as},\cite{radulescu2} and \cite{radulescu1}, and require no knowledge of the discrete series for \pslr.

A von Neumann algebra $M$ is a *-closed unital algebra of bounded operators on a (complex) Hilbert space $\h$ which is closed under
the topology of pointwise convergence on $\h$. The  The \emph{commutant} $M'$ of $M$ 
is the algebra of all bounded operators that commute with $M$. It is also a von Neumann algebra and has the same 
centre as $M$.

A vector $\xi\in\h$ is called \emph{cyclic} for $M$ if $M\xi$ is dense in $\h$ and \emph{separating} for $M$ if 
$x\mapsto x\xi$ is injective on $M$. $\xi$ is cyclic for $M$ iff it is separating for $M'$. $\xi$ is said to be a \emph{trace vector}
for $M$ if $\langle ab\xi,\xi\rangle=\langle ba\xi,\xi\rangle$ for all $a,b\in M$.

$M$ is a factor if it  is central, i.e. the centre is $\mathbb C id$. The most obvious factor is the algebra $\bh$ of all bounded operators. A factor $M$ is
called finite if it possesses a trace functional $tr:M\rightarrow \mathbb C$  with the properties
\begin{enumerate}
\item $tr(ab)=tr(ba)$ for all $a,b\in M$.
\item $tr(1)=1.$
\end{enumerate}
The functional is completely determined by these properties. It is positive definite, which means
that $tr(a^*a)>0$ for $a\neq 0$ so one may form the Hilbert space $L^2(M)$ which is the completion
of $M$ with respect to the pre-Hilbert space inner product $\langle a,b\rangle=tr(b^*a)$.
  An easy example is
the $n\times n$-matrices acting on a Hilbert space of dimension $mn$ with some ``multiplicity'' $m$. We will see
more interesting examples very soon. An infinite dimensional finite factor is called a \underline {II$_1$ factor}.

\begin{definition} Let $\Gamma$ be a (countable) discrete group. The \emph{von Neumann algebra of $\Gamma$},
which we will write $vN(\Gamma)$,  is the von Neumann algebra on $\ell^2(\Gamma)$ generated by the left regular
representation $\gamma \mapsto \lambda_\gamma$, where $\lambda_\gamma (f)(\gamma')=f(\gamma^{-1}\gamma')$.

More generally if $\omega:\Gamma\times\Gamma\rightarrow \mathbb T$ is a unit circle valued 2-cocyle, $vN_\omega(\Gamma)$ is generated on $\ell^2(\Gamma)$ by the unitaries $\lambda_\gamma^\omega$ where
$\displaystyle \lambda^\omega_\gamma (f)(\gamma')=\omega(\gamma,\gamma')f(\gamma')$
(so group multiplication is "twisted" by a cocyle).

\end{definition}

It is well known (\cite{takesaki},\cite{dixmier}) that $vN(\Gamma)$ is a \tf iff $\Gamma$ is icc, i.e. all nontrivial conjugacy classes of $\Gamma$
are infinite. $vN_\omega(\Gamma)$ is a \tf if $\Gamma$ is icc. The trace on $vN_\omega(\Gamma)$  is
given by $$tr(\lambda^\omega_\gamma)=\begin{cases} 1 & \mbox{ if } \gamma=id \\
                                                                                          0 & \mbox{ otherwise}
                                                                                          \end{cases}$$

\section{The von Neumann dimension.}

Let $M$ be a finite factor. We will assign a positive real number, or $\infty$, which we will call $dim_M(\mathcal H)$ to any (separable) Hilbert space on
which $M$ acts. It will completely characterise the Hilbert space as a (Hilbert space) $M$-module up to unitary equivalence. In the finite dimensional case it will be $m\over n$ where $m$ is  the multiplicity above, thus measuring
in some sense the number of copies of the $M$-module $M$ inside $\h$.

 A type II$_\infty$ factor is  the closure of the algebra of all finitely supported matrices with entries in a fixed \tf $M$ acting on the direct sum of infinitely many copies of
 the Hilbert spaces on which $M$ acts. A II$_\infty$ factor has a ``trace'' given by adding up the traces of the diagonal
 matrix entries. It is not defined everywhere but one may talk of ``trace class'' operators in a II$_\infty$ factor just
 as one does for $\bh$ (\cite{RS}).
 If $M$ is a \tf on $\h$ we can ``amplify'' it to act diagonally on
  $\oplus_{n=1}^\infty \h$. Its commutant is then a II$_\infty$ factor. All II$_\infty$ factors arise in this way.
 
We will now assume basic facts about type II$_\infty$ factors, traces on them  and comparison of projections in a factor. See \cite{takesaki},\cite{dixmier}.

\begin{proposition}
If $\mathcal H$ is any Hilbert space on which $M$ acts then there is an $M$-linear isometry $$u:\mathcal H\rightarrow \oplus_{n=1}^\infty L^2(M)$$
\end{proposition}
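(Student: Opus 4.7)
The plan is to decompose $\mathcal{H}$ as a countable orthogonal direct sum of cyclic $M$-submodules and then embed each summand $M$-linearly and isometrically into a copy of $L^2(M)$. For the decomposition, fix a countable dense sequence in $\mathcal{H}$ and construct $\xi_1,\xi_2,\ldots$ inductively: let $\xi_{n+1}$ be the orthogonal projection of the first element of the dense sequence not already lying in $\overline{M\xi_1}\oplus\cdots\oplus\overline{M\xi_n}$. Each $\overline{M\xi_n}$ is closed and $M$-invariant, so the projection onto it lies in $M'$; the closures are mutually orthogonal by construction; and their span contains the dense sequence, hence equals $\mathcal{H}$.

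For the cyclic case, suppose $\mathcal{H}=\overline{M\xi}$. The plan is to realize the vector state $\omega(x)=\langle x\xi,\xi\rangle$ as a vector state on $L^2(M)$ and invoke uniqueness of the GNS construction. Since $\omega$ is normal and positive on the finite factor $M$, the noncommutative Radon--Nikodym theorem produces $h\in L^1(M)_+$ with $\omega(x)=tr(xh)$. Setting $\eta=h^{1/2}\in L^2(M)$ and using cyclicity of $tr$, one computes $\langle x\eta,\eta\rangle_{L^2(M)}=tr(h^{1/2}xh^{1/2})=tr(xh)=\omega(x)$. Hence the densely defined map $x\xi\mapsto x\eta$ extends to an $M$-linear isometry from $\overline{M\xi}$ onto $\overline{M\eta}\subseteq L^2(M)$.

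Direct-summing these embeddings over $n$ yields the required isometry $u:\mathcal{H}\to\bigoplus_{n=1}^\infty L^2(M)$. The one piece of substantive input, and what I expect to be the main obstacle for a self-contained presentation, is the noncommutative Radon--Nikodym theorem for finite factors; it can be derived from the polar decomposition of normal linear functionals together with the realization of $L^2(M)$ as the GNS completion of $M$ with respect to $tr$. Beyond that, only bookkeeping remains: the cyclic decomposition is countable because $\mathcal{H}$ is separable, and the $M$-invariance of each cyclic closure is immediate from the fact that $M$ is a unital $*$-algebra.
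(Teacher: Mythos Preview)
Your argument is correct, but it follows a genuinely different route from the paper's. The paper exploits comparison of projections in a type II$_\infty$ factor: it lets $M$ act diagonally on $\mathcal H\oplus\bigl(\oplus_n L^2(M)\bigr)$, observes that the commutant is II$_\infty$, and notes that the projection $p=1\oplus 0$ is subequivalent to the infinite projection $q=0\oplus 1$; the implementing partial isometry $u\in M'$ with $u^*u=p$, $uu^*\le q$ is then exactly the desired $M$-linear isometry. This is a two-line proof once comparison theory is granted, and it meshes naturally with the paper's subsequent definition of $\dim_M(\mathcal H)$ via the trace of $uu^*$.

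Your approach instead works from the inside out: you break $\mathcal H$ into cyclic pieces and embed each one into $L^2(M)$ via the noncommutative Radon--Nikodym theorem. This is more explicit about \emph{where} in $\oplus_n L^2(M)$ the image of $\mathcal H$ lands, and it avoids invoking the II$_\infty$ comparison machinery directly---though of course the Radon--Nikodym theorem for normal positive functionals on a finite factor is itself a substantial input, as you note. The paper's route is quicker and better adapted to the surrounding development (comparison of projections is already assumed and is used repeatedly afterwards), while yours has the virtue of making the isometry concrete at the level of individual cyclic vectors.
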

\begin{proof}
$M$ acts diagonally on the direct sum $\mathcal H \oplus (\oplus_{n=1}^\infty L^2(M))$. The commutant $M'$ contains the two projections $p=1\oplus 0$
and $q=0\oplus 1$. Since the commutant is a II$_\infty$ factor and $q$ is certainly infinite, we obtain a partial isometry $u\in M'$ such
that $u^*u=p$ and $uu^*=q$. Identifying $\mathcal H$ with the image of $p$, we have our $u$.
\end{proof}

Note that if  $v$ is any other $M$-linear isometry as above then $vv^*$ is equivalent in $M'$ to $uu^*$. 
Note also that, on $ \oplus_{n=1}^\infty L^2(M)$, the commutant $M'$ admits a canoncially normalised trace $tr_{L^2}$ such that the trace of
any projection onto one of the $L^2(M)$'s is equal to 1. 

\begin{definition} With notation as above $$dim_M(\mathcal H)= tr_{L^2}( uu^*).$$
\end{definition} 

\begin{notes} \rm{
\begin{enumerate}

\item Observe that if $M$ is  the scalars $\mathbb C$ then this definition gives exactly the usual definition of the dimension $dim \mathcal H$ of
a separable Hilbert space. If $M$  is the $n\times n$ matrices we obtain $\displaystyle \frac{dim \mathcal H}{n^2}$.

\item With this philosophy one may canonically normalize the trace on $M'$  by defining $$Tr_{M'}(a)=tr_{L^2}(uau^*)$$
It is not hard to show that $Tr_{M'}$ is $dim_M(\mathcal H)$ times the normalised trace on $M'$. Further if $a:\mathcal H\rightarrow \mathcal K$
and $b:\mathcal K\rightarrow \mathcal H$
is a bounded linear map between Hilbert spaces over $M$ then $$Tr_{M'}(ab)=Tr_{M'}(ba)$$

\item
Our definition is not the same as that of Murray and von Neumann in chapter X of \cite{mvn1} where it  measures the
relative mobility of $M$ and $M'$ as follows.  Take any non-zero $\xi\in \mathcal H$ and consider the 
two closed subspaces $\overline {M\xi}$ and $\overline {M'\xi}$ of $\mathcal H$ with orthogonal projections $p$ and $q$ 
respectively. Clearly $p\in M'$ and $q\in M$ so we may form the ratio $\displaystyle \frac{tr_{M}(q)} {tr_{M'}(p)}$.
This was shown in \cite{mvn1} to be independent of $\xi$. With this fact one may easily show it is equal to our $dim_M(\mathcal H)$. This ratio became known as the ``coupling constant'' but calling it the von Neumann
dimension is more revealing. One reason it is a little obscure in \cite{mvn1} is that the authors defined a theory for
all types (I,II$_1$, II$_\infty$ and III) of factors, each one requiring its own treatment.
\end{enumerate}
}
\end{notes}

\subsection{Elementary properties of $\dim_M \h$}
\begin{theorem} \label{couplingproperties} With notation as above,\\
\5
\noindent (\romannumeral 1) $\dim_M(\h)<\infty $ iff $M'$ is a \tf.\\
(\romannumeral 2) $\dim_M(\h) = \dim_M(\kh)$ iff $M$ on $\h$ and $M$
on $\kh$ are
unitarily equivalent.\\
(\romannumeral 3) 
 $\displaystyle \dim_M ({\oplus_i} \h_i) = \sum_i \dim_M \h_i.$\\
(\romannumeral 4) $\dim_M(L^2(M)q) = tr_M(q)$ for any projection $q\in M$.\\
(\romannumeral 5) If $p$ is a projection in $M$,
$\dim_{pMp}(p\h)=tr_M(p)^{-1}\dim_M(\h)$. \5 For the next two
properties we suppose $M'$ is finite, hence a \tf with\\ trace
$tr_{M'}$ (and $tr_{M'}(1)=1$).
\5 \noindent (\romannumeral 6) If $p$ is a projection in $M'$, $\dim_{Mp} (p\h) = tr_{M'}(p) \dim_M \h$.\\
(\romannumeral 7) $(\dim_M\h)( \dim_{M'}\h) =1$.
\5 \noindent (\romannumeral 8) There is a cyclic vector for $M$ iff $dim_M \mathcal H \leq 1$.
\5 \noindent (\romannumeral 9) There is a separating vector, indeed a trace vector, for $M$ iff $dim_M \mathcal H \geq 1$.
\5 \noindent (\romannumeral 10) If $p\xi=\xi$ for $\xi\in \h$ and $p$ a projection in $M$ then $dim_M(\overline{M\xi})\leq tr_M(p)$.
\end{theorem}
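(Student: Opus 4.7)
The plan is to exploit only properties (v) and (viii) already listed in Theorem~\ref{couplingproperties}, rather than embedding anything explicitly into $L^2(M)$. The key idea is to compress everything by $p$ and observe that, after compression, $\xi$ is a cyclic vector for the compressed factor.

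Set $\mathcal K=\overline{M\xi}$, a closed $M$-submodule of $\h$, and compress by $p$. Because $p$ is continuous and $\xi=p\xi$, for any $a\in M$ we have $pa\xi=pap\,\xi$, hence
\[
p\mathcal K \;=\; \overline{pM\xi} \;=\; \overline{(pMp)\,\xi}.
\]
In particular $\xi$ is a cyclic vector for the \tf $pMp$ acting on $p\mathcal K$, and property (viii) applied to $pMp$ on $p\mathcal K$ gives $\dim_{pMp}(p\mathcal K)\le 1$.

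To convert this into a statement about $\dim_M(\mathcal K)$, I apply property (v) to $M$ acting on the submodule $\mathcal K$:
\[
\dim_{pMp}(p\mathcal K)\;=\;tr_M(p)^{-1}\,\dim_M(\mathcal K).
\]
Combining the two displays yields $\dim_M(\overline{M\xi})\le tr_M(p)$, as desired. The degenerate case $p=0$ forces $\xi=0$, so both sides vanish and can be dismissed at the outset.

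The one line that requires a small verification is the identity $p\,\overline{M\xi}=\overline{(pMp)\,\xi}$: the inclusion $\supseteq$ is automatic, while $\subseteq$ follows by continuity of $p$ together with the substitution $pa\xi=pap\,\xi$ valid because $\xi=p\xi$. Given this, the result is a direct consequence of (v) and (viii), so no new machinery is needed---the only real step is spotting that one should compress to $pMp$ before invoking the cyclic-vector bound.
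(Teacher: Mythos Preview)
Your argument is correct, and it takes a different route from the paper's. The paper proves (\romannumeral 10) by appealing to the Murray--von Neumann coupling-constant definition: reducing to $\h=\overline{M\xi}$, it observes that $M'\xi=M'p\xi=pM'\xi\subseteq p\h$, so the projection $q$ onto $\overline{M'\xi}$ satisfies $q\le p$ and hence $tr_M(q)\le tr_M(p)$; the ratio formula then gives $\dim_M(\h)=tr_M(q)$. Your proof instead stays entirely within the isometry-into-$\oplus L^2(M)$ framework, leveraging the already-listed items (\romannumeral 5) and (\romannumeral 8): compressing by $p$ makes $\xi$ cyclic for $pMp$ on $p\mathcal K$, so $\dim_{pMp}(p\mathcal K)\le 1$, and (\romannumeral 5) converts this back to $\dim_M(\mathcal K)\le tr_M(p)$. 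The paper's approach is shorter if one accepts the equivalence of definitions, but yours has the virtue of being self-contained relative to the paper's chosen definition and of not invoking a formulation that the paper only mentions in passing.
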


\begin{proof}
These are all standard results due to Murray and von Neumann-\cite{mvn1}. For proofs based on
our definition see \cite{ghj} or \cite{jonesnotes}. $(\romannumeral 10)$ is easiest proved using the Murray von Neumann definition. Clearly
one can reduce to the case $\h=\overline{M\xi}$ and then $M'\xi=M'p\xi=pM'\xi\subseteq p\h$ so if $q$ is 
projection onto $\overline{M'\xi}$,
$tr_M(q)\leq tr_M(p)$.
\end{proof}

\begin{proposition}\label{imprimitivity}
Let $\Gamma$ be an icc discrete group and $\gamma \mapsto v_\gamma$ be a projective unitary group representation on $\mathcal H$ with 2-cocycle $\omega$. 
Suppose there is a projection $q$ on $\mathcal H$ such that 
$$v_\gamma q v_\gamma^{-1}\perp q \quad \forall \gamma\in \Gamma, \gamma \neq id, \qquad \mbox{       and       }\qquad \sum_{\gamma\in \Gamma} v_\gamma q v_\gamma^{-1}=1 $$
then there is a $\Gamma $-linear unitary $U:\mathcal H\rightarrow \ell^2(\Gamma)\otimes q\mathcal H$ with $Uv_\gamma U^{-1}=\lambda_\gamma^\omega\otimes id$
for $\gamma \in \Gamma$.
\end{proposition}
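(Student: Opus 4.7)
The geometric picture is that the hypothesis says $\{v_\gamma q v_\gamma^{-1}\}_{\gamma\in\Gamma}$ is a $\Gamma$-equivariant partition of unity in $\mathcal{H}$, so $\mathcal{H}$ decomposes as $\bigoplus_{\gamma\in\Gamma} v_\gamma (q\mathcal{H})$, and each summand should play the role of one basis vector $\delta_\gamma \otimes q\mathcal{H}$ in $\ell^2(\Gamma)\otimes q\mathcal{H}$. I will implement this directly.

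Identifying $\ell^2(\Gamma)\otimes q\mathcal{H}$ with the $\ell^2$-functions $f\colon\Gamma\to q\mathcal{H}$, I would define
\[ U\colon \mathcal{H}\longrightarrow \ell^2(\Gamma)\otimes q\mathcal{H}, \qquad (U\xi)(\gamma)=qv_\gamma^{-1}\xi . \]
The fact that $U$ is an isometry is an immediate consequence of the two hypotheses on $q$:
\[ \|U\xi\|^2=\sum_{\gamma\in\Gamma}\|qv_\gamma^{-1}\xi\|^2=\sum_{\gamma\in\Gamma}\langle v_\gamma qv_\gamma^{-1}\xi,\xi\rangle=\langle\xi,\xi\rangle, \]
the last equality using $\sum_\gamma v_\gamma qv_\gamma^{-1}=1$ (and pairwise orthogonality to rearrange the sum).

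For surjectivity, the key computational observation is that for $\alpha\neq id$ we have $qv_\alpha q=0$: indeed, multiplying $q\perp v_\alpha qv_\alpha^{-1}$ on the right by $v_\alpha$ yields $qv_\alpha q=0$. So if I fix $\gamma_0\in\Gamma$ and $\eta\in q\mathcal{H}$ and set $\xi=v_{\gamma_0}\eta$, then for $\gamma\neq\gamma_0$ the cocycle identity rewrites $v_\gamma^{-1}v_{\gamma_0}$ as a scalar multiple of $v_{\gamma^{-1}\gamma_0}$, and $(U\xi)(\gamma)=qv_{\gamma^{-1}\gamma_0}q\,\eta=0$, while for $\gamma=\gamma_0$ one gets $(U\xi)(\gamma_0)=\eta$ up to a scalar of modulus one. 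Hence $U(v_{\gamma_0}\,q\mathcal{H})$ is precisely the coordinate subspace $\delta_{\gamma_0}\otimes q\mathcal{H}$, and summing over $\gamma_0$ gives a dense range; combined with isometry, $U$ is unitary.

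Finally, the intertwining property is a routine cocycle computation: $(Uv_{\gamma_0}\xi)(\gamma)=qv_\gamma^{-1}v_{\gamma_0}\xi$, and using $v_\gamma^{-1}v_{\gamma_0}=\omega(\gamma_0,\gamma_0^{-1}\gamma)^{\pm 1}\cdot(\text{unitary}\ v_{\gamma_0^{-1}\gamma})^{-1}$ one reads off $(\lambda^{\omega}_{\gamma_0}\otimes 1)U\xi$ on the right-hand side. The one step that really requires care, and which I expect to be the only real obstacle, is matching up the scalar phases produced by the projective relations $v_\gamma v_{\gamma'}=\omega(\gamma,\gamma')v_{\gamma\gamma'}$ so that the twisted left regular representation appears with the same cocycle $\omega$ (and not, say, its conjugate or inverse); this is a matter of fixing conventions for $\lambda^\omega$ consistently and then applying the 2-cocycle identity once or twice.
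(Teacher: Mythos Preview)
Your proposal is correct and is essentially the same argument as the paper's, just packaged coordinate-free: the paper picks an orthonormal basis $\{\eta_i\}$ of $q\mathcal H$, notes that $\{v_\gamma\eta_i\}$ is an orthonormal basis of $\mathcal H$, and defines $U(v_\gamma\eta_i)=\varepsilon_\gamma\otimes\eta_i$; your formula $(U\xi)(\gamma)=qv_\gamma^{-1}\xi$ is exactly this map written without choosing a basis (as one checks by evaluating it on $v_\gamma\eta_i$). Both presentations leave the explicit cocycle bookkeeping for the intertwining relation to the reader, so your caveat about matching the phase conventions is appropriate and at the same level of detail as the paper.
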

\begin{proof} Choose an orthonormal basis $\{\eta_i|i=1,2,3,\cdots \}$ of $q\mathcal H$. Then by the two conditions of the proposition 
$\{v_\gamma \eta_i |\gamma \in \Gamma, i=1,2,3,\cdots\}$ is an orthonormal basis for $\mathcal H$. Defining $U$ by
 $U(v_\gamma \eta_i)=\varepsilon _\gamma \otimes \eta_i$ gives the desired unitary where $\varepsilon_\gamma$ is the characteristic function of
 $\{\gamma\}$ in $ \ell^2(\Gamma)$.
\end{proof}

\begin{corollary}\label{gammaII1} Suppose $\Gamma,v,q,\omega$ and $U$ are as in proposition \ref{imprimitivity}. Then the action of $\Gamma$ on $\mathcal H$ makes it
into a $vN_\omega(\Gamma)$-module and if  $p$ is a projection on $\mathcal H$ commuting with
$v_\gamma$ for all $\gamma$ then $$dim_{vN_\omega(\Gamma)} \mathcal H= Tr_{B(\mathcal H)}(pqp)=Tr_{B(\mathcal H)}(qpq)$$ where 
$Tr_{B(\mathcal H)}$ is the usual trace  (\cite{RS}, sum of the diagonal elements for a positive operator) on $B(\mathcal H)$.
\end{corollary}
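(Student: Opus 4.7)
The plan is to transport the problem through the unitary $U$ of Proposition \ref{imprimitivity}, express $\dim_{vN_\omega(\Gamma)}(p\mathcal H)$ as the canonical II$_\infty$-trace of $p$ on the commutant of $M=vN_\omega(\Gamma)$, and then identify that trace with $Tr_{B(\mathcal H)}(qpq)$ via the trace-vector property of $\varepsilon_{id}\in\ell^2(\Gamma)$.

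First, since $p$ commutes with every $v_\gamma$, we have $p\in M'$, and by note (ii) of Theorem \ref{couplingproperties} it suffices to compute $Tr_{M'}(p)$. Conjugating by $U$ identifies $M$ with $vN_\omega(\Gamma)\otimes\mathbb C$ on $\ell^2(\Gamma)\otimes q\mathcal H$, whose commutant is $vN_\omega(\Gamma)'\otimes B(q\mathcal H)$; the canonical II$_\infty$-trace on this commutant (fixed by requiring each copy of $L^2(M)$ in $\ell^2(\Gamma)\otimes q\mathcal H$ to carry trace $1$) is $tr\otimes Tr_{B(q\mathcal H)}$, so the goal reduces to showing $(tr\otimes Tr)(UpU^*)=Tr_{B(\mathcal H)}(qpq)$. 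Next I would compute the image of $q$: the hypothesis $v_\gamma q v_\gamma^{-1}\perp q$ for $\gamma\neq id$ forces $qv_\gamma q=0$ for such $\gamma$, so $q$ is the orthogonal projection onto the span of $\{\eta_i\}$, which $U$ sends to $\varepsilon_{id}\otimes q\mathcal H$; hence $UqU^*=e_{id}\otimes 1_{q\mathcal H}$, where $e_{id}$ is the rank-one projection onto $\varepsilon_{id}$.

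The heart of the argument is the identity $e_{id}\,a\,e_{id}=tr(a)\,e_{id}$ for all $a\in vN_\omega(\Gamma)'$, which follows from the standard fact that the cyclic and separating vector $\varepsilon_{id}$ is simultaneously a trace vector for $vN_\omega(\Gamma)$ and for its commutant. Extending this slice-wise, for any $X\in vN_\omega(\Gamma)'\otimes B(q\mathcal H)$ one gets $(e_{id}\otimes 1)X(e_{id}\otimes 1)=e_{id}\otimes(tr\otimes\mathrm{id})(X)$, and applying the ordinary operator trace (using $Tr_{B(\ell^2(\Gamma))}(e_{id})=1$ together with Fubini) yields $Tr_{B(\mathcal H)}((e_{id}\otimes 1)X(e_{id}\otimes 1))=(tr\otimes Tr)(X)$. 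Setting $X=UpU^*$ produces $(tr\otimes Tr)(UpU^*)=Tr_{B(\mathcal H)}(qpq)$, and $Tr(qpq)=Tr(pqp)$ is cyclicity of $Tr$ applied to $(pq)^*(pq)=qpq$ and $(pq)(pq)^*=pqp$. The one real care point is verifying that the canonical II$_\infty$-trace on $M'$ pulls back under $U$ to $tr\otimes Tr_{B(q\mathcal H)}$; this is where the normalization has to be checked, but it is immediate from the fact that a rank-one projection of the form $1\otimes e_{ii}\in vN_\omega(\Gamma)'\otimes B(q\mathcal H)$ corresponds to one summand $L^2(M)$ and has trace $1$ under both normalizations.
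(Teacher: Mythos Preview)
Your proof is correct and follows essentially the same route as the paper: transport via $U$ to $\ell^2(\Gamma)\otimes q\mathcal H$, identify the commutant as $vN_\omega(\Gamma)'\otimes B(q\mathcal H)$ with canonical trace $tr\otimes Tr$, use that $\varepsilon_{id}$ is a trace vector for $vN_\omega(\Gamma)'$ to rewrite this trace as $x\mapsto Tr_B(exe)$ with $e=e_{id}\otimes 1=UqU^*$, and apply it to $UpU^*$. The only cosmetic difference is that the paper records the key identity directly as $Tr_{M'}(x)=\sum_i\langle x(\varepsilon_{id}\otimes\eta_i),\varepsilon_{id}\otimes\eta_i\rangle=Tr_B(exe)$ and then substitutes $x=UpU^*$, whereas you pass through the intermediate slice formula $(e_{id}\otimes1)X(e_{id}\otimes1)=e_{id}\otimes(tr\otimes\mathrm{id})(X)$; these are the same computation.
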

\begin{proof}
The commutant $M'$ of $vN_\omega(\Gamma)$ on $\ell^2(\Gamma)\otimes q\mathcal H$ is the tensor product of $vN_\omega(\Gamma)'$  and $B(q\mathcal H)$
and the correctly normalised trace on it is the tensor product of the trace on $vN_\omega(\Gamma)'$ (on $\ell^2(\Gamma)$) 
and the usual trace on $B(q\mathcal H)$.
Thus since $\varepsilon_{id}$ is a trace vector for for $vN_\omega(\Gamma)'$, for $x\geq 0 \in M'$,
 $$Tr_{M'}(x)=\sum_i\langle x(\varepsilon_{id}\otimes \eta_i),\varepsilon_{id}\otimes \eta_i\rangle$$
 $$=Tr_{B(\ell^2(\Gamma)\otimes q\mathcal H)}(exe)$$
 where $e$ is orthogonal projection onto $\varepsilon_{id}\otimes q\mathcal H$.
 
 Now $Up$ is a $vN_\omega(\Gamma)$-linear isometry from $\mathcal H$ to $\ell^2(\Gamma)\otimes q\mathcal H$ so that, by the definition of von Neumann dimension,
$$dim_{\vgm} p\mathcal H =Tr_{B(\ell^2(\Gamma)\otimes p\mathcal H)}(eUpU^*e)$$
But $U^*eU=q$ so that $$dim_{\vgm} p\mathcal H =Tr_{B(\mathcal H)}(qpq).$$

\end{proof}

A commonly encountered situation in which the hypotheses of \ref{imprimitivity} are satisfied is when $\Gamma$ acts as deck transformations
for a covering space  $\pi:\mathcal M \rightarrow \mathcal N$ between manifolds. Then if $\Gamma$ preserves a smooth measure and $D$
is a fundamental domain, \ref{imprimitivity} applies to the Hilbert space $\mathcal H = L^2(\mathcal M)$ together with the projection $q$ onto $L^2(D)$.
This is the setup for Atiyah's covering space $L^2$ index theorem \cite{a}. We will use it in a slightly modified form where the natural measure is not preserved.
\begin{remark}\rm{
A rather different use of von Neumann dimension occurs in \cite{jo1}. Given a subfactor $N$ of a II$_1$ factor $M$ the
Hilbert space $L^2(M)$ is a left $N$-module and one defines $[M:N]=dim_N(L^2(M))$. Although the von Neumann
dimension itself takes on all positive real values, it turns out that $[M:N]$ must be, if finite, in the set 
$\displaystyle \{4cos^2\pi/n:n=3,4,5,\cdots\}\cup [4,\infty)$. One recognises the squares of the numbers in the
usual generators of the Hecke groups (see \cite{gannontriangle}). 

The context of this paper originated in 1982 in an attempt to find a relation between the Hecke groups and subfactors. That is still 
a long way off as is the attempt to exploit the rich structure of modular forms for a Fuchsian group to produce
``exotic'' subfactors like those of \cite{AH} (see also \cite{JMS} for more examples and details).

}
\end{remark}

\section{Fuchsian groups and $L^2$ holomorphic functions on $\mathbb H$.}\label{fuchsian}

A \textbf{Fuchsian group} $\Gamma$ is by definition a discrete finite covolume subgroup of $PSL_2(\mathbb R)$.
(Finite covolume is not always assumed in the literature.) If $\Sigma$ is a compact Riemann surface of
genus $\geq 2$, its universal covering space is the upper half plane $\mathbb H$ (as a complex manifold). $PSL_2(\mathbb R)$ is the group of
complex automorphisms of $\mathbb H$ so $\pi_1\Sigma$ is a cocompact Fuchsian group. It is also icc. The unit disc $\mathbb D$ is holomorphically the 
same as $\mathbb H$ under the Cayley transform $C:\mathbb H\rightarrow \mathbb D$:
$$ C(z)= \frac{z-i}{z+i}, \qquad C^{-1}(w)=\frac{w+1}{i(w-1)}$$

And the action $g(z)= \displaystyle \frac{az+b}{cz+d}$ for $g=\begin{pmatrix} 
a & b \\
c & d 
\end{pmatrix}$ in \pslr \hspace{2pt} becomes, after conjugation by $C$,
$w\mapsto$.   The action of \pslr \hspace{2pt} on $\mathbb H$ preserves the measure $\mu_0=\displaystyle\frac{dxdy}{y^2}$ which is the 
measure from a hyperbolic metric of constant curvature $-1$. On $\mathbb D$ 
 the measure becomes $\displaystyle \nu_0=4\frac{dxdy}{(1-|w|^2)^2}$.

\begin{proposition}\label{imaginarybehaviour}
For $g$ as above $\displaystyle Im (g(z))=\frac{Im (z)}{|cz+d|^2}$
\end{proposition}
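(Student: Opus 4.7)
The plan is a direct computation exploiting $\det g = 1$. First I would write
\[
g(z) = \frac{az+b}{cz+d} = \frac{(az+b)(c\bar z + d)}{(cz+d)(c\bar z + d)} = \frac{(az+b)(c\bar z + d)}{|cz+d|^2},
\]
so the denominator is already the real quantity $|cz+d|^2$ and I only need the imaginary part of the numerator.

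Next I would expand the numerator as $ac|z|^2 + adz + bc\bar z + bd$. The terms $ac|z|^2$ and $bd$ are real, so
\[
\mathrm{Im}\bigl((az+b)(c\bar z + d)\bigr) = ad\,\mathrm{Im}(z) + bc\,\mathrm{Im}(\bar z) = (ad - bc)\,\mathrm{Im}(z).
\]
Since $g \in PSL_2(\mathbb{R})$, the representative matrix satisfies $ad - bc = 1$, which gives exactly $\mathrm{Im}(g(z)) = \mathrm{Im}(z)/|cz+d|^2$.

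There is no real obstacle here; the only mild subtlety worth a sentence is that the formula depends only on the class of $g$ in $PSL_2(\mathbb{R})$, since replacing the matrix by its negative changes $cz+d$ to $-(cz+d)$ and leaves $|cz+d|^2$ unchanged, and similarly leaves $g(z)$ unchanged. This justifies using a single representative matrix with determinant $1$.
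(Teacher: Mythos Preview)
Your computation is correct and is the standard proof of this well-known identity. The paper actually states the proposition without proof, treating it as a background fact, so there is nothing to compare against; your argument would serve perfectly well as the omitted verification.
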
 
 All Fuchsian groups are icc (\cite{ake}). If $\Gamma$ is a Fuchsian group it has a fundamental domain which means that $L^2(\mathbb H, d\mu_0)$ satisfies the hypotheses
of \ref{imprimitivity} so that $\Gamma$ generates a II$_1$ factor with II$_\infty$ commutant on $L^2(\mathbb H, d\mu_0)$.

For each real $s>1$ we define the measure $\mu_s=y^{s-2}dxdy$ on $\mathbb H$. $\mu_s$ is not invariant under \pslr
 but   we 
 have, for any $L^1$ function $F$,  
 $$ \int_{\mathbb H}F(z)Im (z)^s\frac{dxdy}{y^2}= \int_{\mathbb H}F(g(z))Im(g(z))^s\frac{dxdy}{y^2}= \int_{\mathbb H}F(g(z))\frac{y^s}{|cz+d|^{2s}}\frac{dxdy}{y^2}$$

 so that, 
 choosing a branch of $(cz+d)^{s}$
 for each $g$, $$(\check\pi_s(g^{-1})f)(z)=\frac{1}{(cz+d)^{s}}f(g(z))$$ defines
a unitary operator on $L^2(\mathbb H, d\mu_s)$, preserving holomorphic functions.

\begin{remark}\label{choiceofbranch}\rm{
For definiteness we will choose the following branch of log to define $(cz+d)^s$:
$$log(cz+d)= \int_\kappa \frac{c}{cz+d} dz+\frac{i\pi}{2} \mbox{ where } \kappa \mbox{ is the straight line from } i \mbox{ to } z$$
Exercise: show that $\check\pi_s(g^{-1})=\check\pi_s(g)^{-1}$.
}
\end{remark}

(The reason for using $s$ rather than $\alpha=s-2$ is that the measure $(1-r^2)^{-2}rdrd\theta$ is  hyperbolic measure,
invariant under the usual action of \pslr on the disc, which is more natural when it comes to Fuchsian groups than the usual 
Lebesgue measure for Bergman space.)    

Now if we consider the function $j:SL_2(\mathbb R)\times\mathbb H\rightarrow \mathbb C$ defined by $\displaystyle j(g,z)=cz+d$, it
is easy to check the cocycle condition $$j(gh,z)=j(g,h(z))j(h,z)$$ so that if $s$ is equal to a \textbf{positive integer} $p$,
the map $g\mapsto \check\pi_p (g)$ defines a unitary representation of $SL_2(\mathbb R)$ which preserves holomorphic functions. 

 If $p$ is even, $\check\pi(-id)$ is the identity so that $\check \pi$  passes to \pslr. If $p$ is odd, $\check\pi(-id)=-id$ so  $\check \pi$ is only
 a projective representation. 
 
\begin{remark}\label{projective}

 \rm{For an arbitrary real postive $s$,  $\gamma\mapsto \check\pi(\gamma)$ is a projective unitary representation.
To see this just take the $s$th. power of the cocycle relation for $j$ above to obtain that 
$j(gh,z)^s$ and $j(g,h(z))^sj(h,z)^s$ differ by a complex number of absolute value equal to one.

 
 }
 \end{remark} The projective representation $\check \pi$ cannot, for non-integral $s$, be lifted to an honest representation of
 \pslr \2since then it would be a discrete series representation which it isn't-see \cite{barg}.
 But when restricted to $\Gamma$ the relevant cohomology obstruction may vanish (this is the case for \pslz) so 
 one may still get an honest representation of $\Gamma$. That there are Fuchsian groups for which the relevant 
 cohomology obstruction does not vanish will be treated in appendix 1.
 
 If $s$ is not an integer the cocycle condition for $j$ does not imply a cocycle condition for $j^{-p}$ so one only obtains a 
 projective representation for  $\check \pi$.
It can
be considered a unitary representation of the universal cover of \pslr \hspace{2pt}via Bargmann \cite{barg}.

\begin{proposition} \label{cayleyonfunctions}If $f\in L^2(\mathbb H, d\nu_s)$ then $f\mapsto \check f$ where $\displaystyle\check f(z) = \Big(\frac{2}{z+i}\Big)^sf\Big(\frac{z-i}{z+i}\Big)$ defines a unitary from $L^2(\mathbb D, d\nu_s)$ to $L^2(\mathbb H, d\mu_s)$ which intertwines the two projective representations of \pslr.
\end{proposition}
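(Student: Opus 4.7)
The whole proposition will rest on two classical identities for the Cayley transform, plus the cocycle property of $j(g,z)=cz+d$. The strategy is: reduce the unitarity claim to a change-of-variables calculation, and the intertwining claim to a pointwise identity for multipliers.

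First I would record the Cayley identities
$$1-|C(z)|^2=\frac{4\,\mathrm{Im}\,z}{|z+i|^2},\qquad |C'(z)|^2=\frac{4}{|z+i|^4},$$
which follow respectively from $|z+i|^2-|z-i|^2=4\,\mathrm{Im}\,z$ and $C'(z)=2i/(z+i)^2$. The first says $C$ is an isometry between the two hyperbolic metrics and the second is the Euclidean Jacobian. Together they control everything: $(2/(z+i))^s$ is, up to a phase, the square root of the Radon--Nikodym derivative interpolating $d\mu_s$ and the pullback of $d\nu_s$.

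For unitarity, I would substitute $w=C(z)$ in $\int_{\mathbb D}|f(w)|^2\,d\nu_s$. Using the two identities the integrand becomes a constant multiple of $|f(C(z))|^2\,y^{s-2}/|z+i|^{2s}\,dxdy$, while $|(2/(z+i))^s|^2=4^s/|z+i|^{2s}$ gives exactly $|\check f(z)|^2\,y^{s-2}\,dxdy=|\check f(z)|^2\,d\mu_s$ on $\mathbb H$, the normalizing constant matching the one built into $\nu_s$ from its definition. Holomorphicity is preserved in passing from $f$ to $\check f$ because the multiplier $(2/(z+i))^s$ is holomorphic on $\mathbb H$ (using the branch of Remark \ref{choiceofbranch}) and $f\circ C$ is holomorphic whenever $f$ is; antiholomorphic arguments give that $V:f\mapsto\check f$ is surjective, hence unitary.

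For the intertwining, write $\tilde g=CgC^{-1}$ for the conjugate action of $g=\begin{pmatrix}a&b\\c&d\end{pmatrix}\in PSL_2(\mathbb R)$ on $\mathbb D$; the projective action on $L^2(\mathbb D,d\nu_s)$ is the one whose cocycle $j_{\mathbb D}(\tilde g,w)$ is characterized by the transformation rule for $\nu_s$. Applying the cocycle identity $j(gh,z)=j(g,h(z))j(h,z)$ to the decomposition $g=C^{-1}\tilde g C$ expresses $(cz+d)$ as $j_{\mathbb D}(\tilde g,C(z))$ times a ratio of Cayley factors $(z+i)$ and $(g(z)+i)$. Raising to the power $-s$ and multiplying through by $(2/(z+i))^s$ produces exactly the identity needed to verify $V\circ\pi^{\mathbb D}_s(g^{-1})=\check\pi_s(g^{-1})\circ V$ on functions, up to the unit-modulus scalar inherent in projective representations.

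The main obstacle I expect is bookkeeping for the branches of the $s$-th powers, since for non-integral $s$ the identity is only projective and one must check that the branch prescription of Remark \ref{choiceofbranch} on $\mathbb H$ is consistent with the analogous prescription on $\mathbb D$; the unitarity side, involving only absolute values, is essentially mechanical once the two Cayley identities are in hand.
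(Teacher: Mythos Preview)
Your proposal is correct and is essentially the same approach as the paper's. The paper's proof is only a two-sentence sketch: it says unitarity ``can be checked directly'' (your change-of-variables computation with the two Cayley identities) and that the intertwining follows by ``extending the action on functions from $SL(2,\mathbb R)$ to $SL(2,\mathbb C)$ and conjugating by the Cayley transform''---which is exactly your use of the cocycle identity for $j$ on the factorisation $g=C^{-1}\tilde g\,C$, since $C$ lives in $SL(2,\mathbb C)$. You have simply written out what the paper leaves implicit, and your remark about tracking branches for non-integral $s$ is the only point requiring any care.
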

\begin{proof} This can be proved by extending the action on functions from $SL(2,\mathbb R)$ to $SL(2,\mathbb C)$ and conjugating
by the Cayley transform. Unitarity can be checked directly.
\end{proof}

\begin{definition} Let $P_s$ be orthogonal projection from $L^2(\mathbb H, d\mu_s)$ onto the closed subspace spanned by functions
which are holomorphic. This subspace is the ``weighted Bergman space'' $A^2_\alpha$ with $\alpha = s-2.$ We will use the notation indifferently
for functions on $\mathbb D$ or $\mathbb H$.  The projective representation $\pi_s$ of \pslr is defined to be
the restriction of $\check \pi$ to $A^2_{s-2}$. 
\end{definition}

\begin{remark}\label{reproducing}\rm{
These Hilbert spaces of analytic functions are "reproducing kernel" Hilbert spaces.  
The parameter in the literature is usually $\alpha=s-2$. This means that for each $z\in \mathbb H$
there is a $\varepsilon_z\in A^2_{s-2}$ such that $$ \langle \varepsilon_z,f \rangle=f(z)$$
This follows from the continuity of point evaluation.}
\end{remark}
 As noted, if $s$ is an even positive integer we get an honest unitary representation of \pslr, but not for $s$ odd. 

Let $\Gamma$ be a Fuchsian group with fundamental domain $F$. We have seen that $\check \pi_ s$ restricted to $\Gamma$ defines a projective
unitary representation of the II$_1$ factor $M=vN_\omega(\Gamma)$ where $\omega$ is the $2$-cocycle with values in the circle 
which comes from the chosen branch of the logarithm of $cz+d$ on $\mathbb H$.
To calculate the von Neumann dimension $dim_M(A^2_{s-2})$ for we  will use an orthonormal basis of $A^2_{s-2}$. We will work in
$\mathbb D$ where it is obvious that the powers of $z$ are orthogonal so all we need to do is normalize them.
The result is very well known (see \cite{ghj},\cite{radulescu1}) but we include the calculation for the convenience of the reader.

\begin{proposition}\label{onb} \mbox{  } \\
\begin{enumerate}\item Let $\displaystyle e_n(w)=\sqrt{{\frac{s-1}{4\pi}}}\sqrt{\frac{s(s+1)....(s+n-1)}{n!}}w^n$ for $w\in \mathbb D$. 
Then $e_n$ is an orthonormal basis for $A^2_{s-2}$.

\item Let $\displaystyle f_n(z)=\sqrt{{\frac{s-1}{4\pi}}}\sqrt{\frac{s(s+1)....(s+n-1)}{n!}}\Big(\frac{2}{z+i}\Big)^s\Big(\frac{z-i}{z+i}\Big)^n$ for $z\in \mathbb H$. 
Then $f _n$ is an orthonormal basis for $A^2_{s-2}$.
\end{enumerate}
\end{proposition}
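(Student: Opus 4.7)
The plan is to prove part (1) by direct calculation on the disc, and then deduce part (2) by applying the unitary from Proposition \ref{cayleyonfunctions}, which sends $e_n$ to $f_n$.

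For orthogonality in part (1), I would first identify the relevant measure on $\mathbb{D}$: tracking the Cayley transform shows that the image of $y^{s-2}dxdy$ is (up to a harmless constant) $4(1-|w|^2)^{s-2}dudv$, which is the standard weighted measure on the disc for $A^2_{s-2}$. Then in polar coordinates $w=re^{i\theta}$, the integral $\int_{\mathbb{D}} w^n\overline{w}^m (1-|w|^2)^{s-2}\,rdrd\theta$ separates, and the $\theta$-integral kills cross terms, leaving only $n=m$. So the monomials $\{w^n\}$ are automatically mutually orthogonal.

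For the normalization, I would compute $\|w^n\|^2$ by the substitution $u=r^2$, which converts the radial integral into a Beta integral $B(n+1,s-1)=\Gamma(n+1)\Gamma(s-1)/\Gamma(n+s)$. Rewriting the normalizing constant via $s(s+1)\cdots(s+n-1)=\Gamma(s+n)/\Gamma(s)$ and $(s-1)\Gamma(s-1)=\Gamma(s)$, the Gamma factors cancel exactly and we obtain $\|e_n\|=1$. This is just a careful bookkeeping of Gamma identities; no real subtlety.

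The only step requiring care is completeness, i.e.\ showing that $\{e_n\}$ spans $A^2_{s-2}$ and is not merely orthonormal. For this, I would argue that any $f\in A^2_{s-2}$ is holomorphic on $\mathbb{D}$, hence admits a Taylor expansion $f(w)=\sum a_nw^n$ converging uniformly on compact subsets. By the mutual orthogonality of the monomials and Fatou/monotone convergence applied to the truncated partial sums on disks $|w|\le 1-\varepsilon$, one gets $\|f\|^2=\sum|a_n|^2\|w^n\|^2$, so the partial sums converge to $f$ in $L^2$. Thus $f$ lies in the closed span of $\{e_n\}$. This is the step most likely to look like genuine work, though it is essentially the standard argument for density of polynomials in weighted Bergman spaces.

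Finally, for (2), the unitary $f\mapsto \check f$ of Proposition \ref{cayleyonfunctions} from $L^2(\mathbb{D},d\nu_s)$ to $L^2(\mathbb{H},d\mu_s)$ preserves holomorphy and therefore restricts to a unitary between the two realizations of $A^2_{s-2}$; applying it term by term to $e_n$ produces precisely $f_n$, which is then automatically an orthonormal basis of $A^2_{s-2}$ as a subspace of $L^2(\mathbb{H},d\mu_s)$.
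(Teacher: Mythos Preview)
Your proof is correct and follows essentially the same route as the paper: orthogonality of the monomials by the $\theta$-integration in polar coordinates, normalization via the substitution $t=r^2$ and the Beta integral $B(n+1,s-1)=\Gamma(n+1)\Gamma(s-1)/\Gamma(n+s)$, and part (2) by applying the Cayley unitary of Proposition~\ref{cayleyonfunctions}. The only difference is that you supply a completeness argument (density of polynomials via the Taylor expansion), which the paper's proof omits entirely.
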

\begin{proof}
It is trivial that $\langle e_n,e_m\rangle=0$ for $n\neq m$, so we only need to calculate, writing $w=u+iv$, 
$$||w^n||^2=\int_{\mathbb D}|w|^{2n} (1-|w|^2)^{s-2}4dudv=4\int_0^{2\pi}\int_0^1r^{2n}(1-r^2)^{s-2}rdrd\theta$$
Putting $t=r^2$ we get $$4\pi\int_0^1t^n(1-t)^{s-2}dt=4\pi\beta(n+1,s-1)=4\pi\frac{\Gamma(n+1)\Gamma(s-1)}{\Gamma(n+s)}$$
Expanding the $\Gamma$ functions we get the result for $e_n$ and the result for $f_n$ follows from \ref{cayleyonfunctions}
\end{proof}

\begin{theorem}\label{vonNdimension} With notation as above $$dim_M(A^2_{s-2})=\frac{s-1}{4\pi} covolume(\Gamma)$$
\end{theorem}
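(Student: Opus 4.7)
The plan is to apply the imprimitivity machinery of Corollary \ref{gammaII1} with the natural fundamental-domain projection and then compute the resulting trace explicitly using the reproducing-kernel diagonal.

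First I would check the hypotheses of Proposition \ref{imprimitivity} in this setting. Take $\mathcal H = L^2(\mathbb H, d\mu_s)$, the projective unitary representation $v_\gamma = \check\pi_s(\gamma)$, and let $q$ be multiplication by $\chi_F$ for a fundamental domain $F$ of $\Gamma$. A direct computation shows that for any $\gamma\in\Gamma$ the conjugate $v_\gamma \, M_{\chi_F}\, v_\gamma^{-1}$ is again a multiplication operator, and by tracking supports it equals $M_{\chi_{\gamma F}}$ (the unitarity of $v_\gamma$ forces the self-adjoint projection structure, so any phase coming from the cocycle must cancel). Since the $\Gamma$-translates of $F$ tile $\mathbb H$ up to measure zero, the projections $v_\gamma q v_\gamma^{-1}$ are pairwise orthogonal and sum strongly to $1$. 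Thus \ref{imprimitivity} applies, giving a unitary $U$ intertwining the $\Gamma$-action with $\lambda^\omega\otimes \mathrm{id}$ on $\ell^2(\Gamma)\otimes qL^2(\mathbb H)$.

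Second, the orthogonal projection $P_s$ onto $A^2_{s-2}$ commutes with all $v_\gamma$ because $\check\pi_s$ preserves holomorphy. Corollary \ref{gammaII1} then yields
$$\dim_M(A^2_{s-2}) \;=\; \mathrm{Tr}_{B(\mathcal H)}(P_s q P_s) \;=\; \sum_n \langle q f_n, f_n\rangle \;=\; \int_F \Bigl(\sum_n |f_n(z)|^2\Bigr) d\mu_s(z),$$
where $\{f_n\}$ is the orthonormal basis of Proposition \ref{onb}. The quantity $K(z,z):=\sum_n |f_n(z)|^2$ is the diagonal of the reproducing kernel.

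The third step is the explicit computation of $K(z,z)$. On the disc, summing the series $\sum_n |e_n(w)|^2$ with the normalizing constants from \ref{onb} is a routine application of the binomial identity $(1-x)^{-s}=\sum_{n\ge 0}\tfrac{s(s+1)\cdots(s+n-1)}{n!}x^n$, which gives
$$K_{\mathbb D}(w,w) = \frac{s-1}{4\pi}(1-|w|^2)^{-s}.$$
Transferring to $\mathbb H$ by Proposition \ref{cayleyonfunctions}, the Jacobian factor $|2/(z+i)|^{2s}$ combines with the identity $1-|C(z)|^2 = 4\,\mathrm{Im}(z)/|z+i|^2$ to collapse neatly, producing
$$K_{\mathbb H}(z,z) \;=\; \frac{s-1}{4\pi}\, y^{-s}.$$

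Finally I would integrate against $d\mu_s = y^{s-2}dxdy$: the $y^{-s}$ factor cancels against $y^{s-2}$ leaving the invariant hyperbolic area element $dxdy/y^2$, so
$$\dim_M(A^2_{s-2}) = \frac{s-1}{4\pi}\int_F \frac{dx\,dy}{y^2} = \frac{s-1}{4\pi}\,\mathrm{covolume}(\Gamma).$$
The only delicate step is the first one: verifying that conjugation by the projective unitary $v_\gamma$ sends $\chi_F$-multiplication to $\chi_{\gamma F}$-multiplication without introducing a $z$-dependent phase. The remainder is a clean bookkeeping exercise once the kernel identity is in hand.
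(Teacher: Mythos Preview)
Your proposal is correct and follows essentially the same route as the paper: apply Corollary \ref{gammaII1} with $p=P_s$ and $q=M_{\chi_F}$, then compute $\mathrm{Tr}(qP_sq)=\sum_n\int_F|e_n|^2\,d\nu_s$ via the binomial series $(1-r^2)^{-s}=\sum_n \tfrac{s(s+1)\cdots(s+n-1)}{n!}r^{2n}$, which collapses the integrand to the constant $\tfrac{s-1}{4\pi}$ times the hyperbolic area element. The only differences are cosmetic: the paper stays on $\mathbb D$ throughout rather than transferring to $\mathbb H$, and it does not phrase $\sum_n|e_n|^2$ as the reproducing-kernel diagonal; your worry about the conjugation $v_\gamma M_{\chi_F} v_\gamma^{-1}=M_{\chi_{\gamma F}}$ is easily dispatched since for any unitary of the form $(Uf)(z)=j(z)f(\phi(z))$ with $j$ nowhere zero one has $U M_g U^{-1}=M_{g\circ\phi}$, the factor $j$ cancelling automatically.
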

\begin{proof}
We will do the calculation in the $\mathbb D$ model.
By \ref{gammaII1} we have to calculate 
$$\sum_{n=0}^\infty \int_F |e_n(w)|^2(1-|w|^2)^{s-2}4dudv=\frac{s-1}{4\pi}\sum_{n=0}^\infty \int_F  \frac{s(s+1)....(s+n-1)}{n!}r^{2n}4dudv $$
Everything in sight is positive so  one can commute summing and integration. 
We have $\displaystyle  (1-r^2)^{-s}=\sum_{n=0}^\infty \frac{s(s+1)...(s+n-1)}{n!}r^{2n} $ which gives
$\displaystyle dim_M(A^2_{s-2})=\frac{s-1}{4\pi}\int_F\frac{dudv}{v^2}$ so we get $$dim_M(A^2_{s-2})=\frac{s-1}{4\pi}\mbox{covolume} (\Gamma)$$
as required.

\end{proof}

\begin{notes} {Special cases.}
\begin{enumerate}
\item $\Gamma=$\pslz . Here the covolume(=hyperbolic area of fundamental domain) is, by Gauss-Bonnet or direct integration over $F$, equal
to $\pi/3$. So we have, for $s>1$, $$dim_{vN(PSL_2\mathbb Z)}A^2_{s-2}=\frac{s-1}{12}.$$
Since $\Gamma$ is in this case the free product of two cyclic groups the projective representation actually
lifts to an honest one so we are dealing with $vN(PSL(2,\mathbb Z))$.
\item If $\Sigma$ is a compact Riemann surface of genus $g>1$ with hyperbolic metric, its area is $4\pi(g-1)$ so 
 $$dim_{vN(\pi_1(\Sigma))}A^2_{s-2}=(s-1)(g-1).$$
 In this case the projective representation does not necessarily lift to an honest one as we will show in 
 appendix \ref{obstruction}. However if $s$ is an odd integer the existence of spin structures shows that 
 the lifting does exist.
\end{enumerate}
 \end{notes}
 
 Why might these von Neumann dimension formulae actually lead to non-trivial results?
 The fact that equality of traces in a factor implies equivalence of projections is an ergodic theoretic result ultimately
 relying on patching together lots of little projections. There are some instances of results using it which are nontrivial.
 Let us discuss the author's favourite (due to Kaplansky). In fact it does not even use factoriality!
 
 \begin{theorem} Let $\Gamma$ be a discrete group and $\mathbb F$ a field of characteristic zero. Let $\mathbb F\Gamma$ be the 
 group algebra. Then  $ab=1 \iff ba=1$ in $\mathbb F\Gamma$.
 
 \end{theorem}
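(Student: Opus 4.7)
The plan is to extend $a, b$ into the II$_1$ factor $vN(\Gamma)$ via the left regular representation and exploit its faithful tracial state $\tau$ to force a trace-zero idempotent to vanish. As emphasized, the argument needs only the existence of a faithful positive trace; factoriality plays no role. I first reduce to $\mathbb F \subseteq \mathbb C$: if $a, b \in \mathbb F \Gamma$ satisfy $ab = 1$, the subfield $\mathbb F_0 \subseteq \mathbb F$ generated over $\mathbb Q$ by the finitely many coefficients of $a$ and $b$ is a finitely generated extension of $\mathbb Q$, and so embeds into $\mathbb C$. Thereafter $a, b \in \mathbb C \Gamma \subseteq vN(\Gamma)$, and the trace $\tau(x) = \langle x \delta_{id}, \delta_{id}\rangle$ of the excerpt is tracial with $\tau(1) = 1$ and faithful ($\delta_{id}$ is separating, since the right regular representation commutes with $vN(\Gamma)$ and acts cyclically on $\ell^2(\Gamma)$).

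Now suppose $ab = 1$ and set $e = ba$. Then $e^2 = b(ab)a = ba = e$, so $e$ is an idempotent in $vN(\Gamma)$, and by traciality
$$\tau(e) = \tau(ba) = \tau(ab) = \tau(1) = 1.$$
Hence $f := 1 - e$ is an idempotent with $\tau(f) = 0$, and everything reduces to showing that such an $f$ must vanish.

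For this I would prove the following lemma: for any idempotent $f \in vN(\Gamma)$, $\tau(f) = \tau(p)$, where $p$ is the orthogonal projection onto the closed subspace $f \ell^2(\Gamma) = \ker(1 - f)$. The projection $p$ lies in $vN(\Gamma)$ because the range of $f$ is invariant under the right regular representation. The identities $p f = f$ (every $f\xi$ lies in the range of $p$) and $f p = p$ ($f$ acts as the identity on its own range) combine with traciality to give
$$\tau(f) - \tau(p) = \tau(p f) - \tau(f p) = 0.$$
Applied to $f = 1 - e$: $p$ is a genuine projection with $\tau(p) = 0$, so faithfulness forces $p = 0$, hence $f = p f = 0$ and $ba = 1$.

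The main obstacle is the lemma, which is precisely where one converts a scalar trace identity into the algebraic equality $1 - ba = 0$ for the \emph{non-self-adjoint} idempotent $1 - ba$. The ingredients, however, are mild: closedness of the range of a bounded idempotent (it is the kernel of $1 - f$), invariance under the right regular representation, and the trace property; no spectral theory, discrete series, or factoriality is needed.
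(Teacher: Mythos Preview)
Your proof is correct and takes a route parallel to, but distinct from, the paper's. After the same reduction to $\mathbb C\Gamma\subset vN(\Gamma)$, the paper argues operator-theoretically: from $ab=1$ one sees $ba$ acts as the identity on the range of $b$, and then the polar decomposition $b=u|b|$ together with $\ker b=0$ gives $u^*u=1$, hence $tr(uu^*)=1$, hence $uu^*=1$, so the range of $b$ is dense and $ba=1$. You instead work directly with the idempotent $e=ba$, use traciality to get $\tau(1-e)=0$, and invoke your lemma that an idempotent and its range projection have the same trace (via the clean identities $pf=f$, $fp=p$); faithfulness then kills $1-e$. Your approach avoids polar decomposition and isolates the underlying mechanism (a faithful trace annihilates trace-zero projections, and idempotents inherit this) more cleanly; the paper's approach is more concrete about \emph{why} $ba$ is the identity as an operator. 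Both arguments need only the finite trace, not factoriality, and both implicitly use the bicommutant theorem (you to place $p$ in $vN(\Gamma)$, the paper to keep the polar part $u$ there).
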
 
 \begin{proof}Since the relations $ab=1$ and  $ba=1$ only involve finitely many scalars we may embed $\mathbb F$ in $\mathbb C$ and work in
 $\mathbb C \Gamma$ which embeds into $vN(\Gamma)$.

 So the result follows from $ab=1 \iff ba=1$ in a finite von Neumann algebra $M$ with trace $tr$. Let $M$ act on some $\mathfrak H$.
 
 Suppose $ab=1$. Then for any $\xi\in \mathfrak H$, $ba(b\xi)=b\xi$ so since $ba$ is bounded it suffices to show that the 
 range of $b$ is dense. But if $b=u|b|$ is the polar decomposition of $b$ then $u$ is a partial isometrey from the orthogonal
 complement of the kernel of $b$ to the closure of the image of $b$. But $u^*u=1$ since $ker(b)=0$ (since $ab=1$).
 So $tr(uu^*)=1$ is one which means $uu^*=1$ so the image of $b$ is dense.
 \end{proof}
 
 The conclusion of the theorem remains an open problem if one drops the condition that the characteristic of
 the field be zero. Thus the use of von Neumann algebra in this context can have considerable content and it
 could be that the results of this paper are quite difficult to obtain by any other means.
 Here is a sample (it will be one direction of theorem \ref{ultimate} below).
 
 \begin{proposition} \label{sample}Let $\Gamma$ be a Fuchsian group. Then if $\displaystyle s>1+\frac{4\pi}{covolume(\Gamma)}$ and $z\in\mathbb H$
there is a non-zero function in $A^2_{s-2}$  vanishing the orbit $\Gamma(z)$.
 
 \end{proposition}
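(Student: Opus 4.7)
The plan is to exploit McMullen's idea of using reproducing kernel vectors $\varepsilon_z\in A^2_{s-2}$: the cyclic $M$-submodule they generate has von Neumann dimension at most one, and when the ambient dimension strictly exceeds one, its orthogonal complement supplies nonzero functions vanishing on $\Gamma(z)$.

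First I would translate ``$f$ vanishes on $\Gamma(z)$'' into an orthogonality condition. Let $M=vN_\omega(\Gamma)$ denote the von Neumann algebra generated by $\pi_s(\Gamma)$ on $A^2_{s-2}$. For any $f\in A^2_{s-2}$ and $\gamma\in\Gamma$, unitarity of $\pi_s$, the reproducing identity $\langle\varepsilon_w,f\rangle=f(w)$, and the formula $(\check\pi_s(\gamma^{-1})f)(z)=(cz+d)^{-s}f(\gamma(z))$ combine to give $\langle\pi_s(\gamma)\varepsilon_z,f\rangle=\lambda_\gamma(cz+d)^{-s}f(\gamma(z))$ for some nonzero projective scalar $\lambda_\gamma$. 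Hence $f\perp\pi_s(\gamma)\varepsilon_z$ for every $\gamma$ if and only if $f$ vanishes on the orbit $\Gamma(z)$, which is equivalent to $f\perp\overline{M\varepsilon_z}$ since $M$ is the weak closure of the algebra generated by the $\pi_s(\gamma)$. The problem thus reduces to showing $\overline{M\varepsilon_z}\neq A^2_{s-2}$.

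This is now immediate from the dimension theory already developed. Property (viii) of Theorem \ref{couplingproperties} applied to the cyclic vector $\varepsilon_z$ of the $M$-submodule $\overline{M\varepsilon_z}$ gives $\dim_M(\overline{M\varepsilon_z})\leq 1$. Theorem \ref{vonNdimension} meanwhile gives $\dim_M(A^2_{s-2})=\tfrac{s-1}{4\pi}\,\mathrm{covolume}(\Gamma)$, which under the hypothesis $s>1+\tfrac{4\pi}{\mathrm{covolume}(\Gamma)}$ is strictly greater than $1$. By additivity of von Neumann dimension over orthogonal decompositions (property (iii)), the orthogonal complement of $\overline{M\varepsilon_z}$ inside $A^2_{s-2}$ has strictly positive von Neumann dimension, hence is nonzero, and any nonzero element in it is a function in $A^2_{s-2}$ vanishing on $\Gamma(z)$.

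There is no real obstacle here beyond the reproducing-kernel computation linking $\pi_s(\gamma)\varepsilon_z$ to point evaluation at $\gamma(z)$, which is routine once the explicit formula for $\check\pi_s$ is in hand. This is the ``easy'' direction of Theorem \ref{ultimate}; the genuinely hard direction is the matching lower bound for $\Omega$, which requires producing an explicit trace vector for $M$ and is where the left-ordering technique of the paper enters.
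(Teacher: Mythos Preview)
Your proof is correct and follows essentially the same route as the paper: use the reproducing kernel vector $\varepsilon_z$, observe that $\dim_M(\overline{M\varepsilon_z})\leq 1$ by (\romannumeral 8) of Theorem \ref{couplingproperties}, compare with $\dim_M(A^2_{s-2})>1$ from Theorem \ref{vonNdimension}, and conclude that the orthogonal complement is nonzero and consists of functions vanishing on $\Gamma(z)$. You have merely written out in more detail the reproducing-kernel computation that the paper leaves implicit in the single sentence ``Thus $\xi$ vanishes on $\Gamma(z)$.''
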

 \begin{proof}
 ($\Gamma$ is icc by \cite{ake}. Let $\epsilon_z$ be the reproducing kernel vector for $z$ so
 that $\langle \epsilon_z,\xi\rangle=\xi(z)$ for all $\xi\in A^2_{s-2}$. Then the von Neumann dimension of the closure of $\vgm \epsilon_z$ is at most
 $1$ by ($\romannumeral 8$) of \ref{couplingproperties}. But by  \ref{vonNdimension}, the von Neumann dimension of $A^2_{s-2}$ is greater than $1$. So there
 is a $\xi \in A^2_{s-2}$ orthogonal to $\pi_s(\gamma) \epsilon_z$ for every $\gamma$. Thus $\xi$ vanishes on $\Gamma(z)$.

 \end{proof}
 
 By a relatively subtle argument with cusp forms it is possible to prove this result without the use of von Neumann
 algebras for \pslz (see the discussion after definition \ref{cuspbounded}), but a proof in full generality might be
 very complicated.
 
\section{Wandering vectors and trace vectors.}
 
 For convenience we introduce the following definition which appears to be well accepted.
 
 \begin{definition}\label{wandering}  If $\pi$ is a (projective unitary) representation of a group $\Gamma$ on a Hilbert space $\mathcal H$ then\\
 \begin{enumerate}
 \item A (non-zero) vector $\xi\in \mathcal H$ is called a \textbf{wandering vector} for $\pi$ if 
 $$\langle \xi, \pi(\gamma) (\xi)\rangle =0 \mbox{ for all } \gamma \neq 1 \mbox{ in } \Gamma$$
 \item A subspace $V\subseteq \mathcal H$ is called a \textbf{wandering subspace} if $$\pi(\gamma) (V)\perp V \mbox{ for all }  \gamma \neq 1 \mbox{ in } \Gamma$$
 \end{enumerate}
 \end{definition}

Note that any nonzero element of a wandering subspace is a wandering vector and orthogonal vectors in a wandering subspace produce
wandering vectors with orthogonal orbits.

\begin{definition}If $M$ is a  von Neumann algebra on $\h$, a non-zero vector $\eta\in\h$ is called a \emph{trace vector} for $M$ if
$\langle x\eta,\eta\rangle$ is a multiple of the trace of $x$ for every $x\in M$.
\end{definition}
There is a simple relationship between the two concepts:
\begin{proposition}\label{standardsubspace} If $\pi$ is a (projective unitary) representation of a group $\Gamma$ on a Hilbert space $\mathcal H$ and $\xi$ is a wandering vector for $\pi$ then $\xi$ is a trace vector for the von Neumann algebra $M$ generated by
$\pi(\Gamma)$. Moreover on the closure of the subspace $[\pi(\Gamma)\xi]$, $M$ is isomorphic to the twisted 
group von Neumann algebra $vN_\omega(\Gamma)$ (where $\omega$ is the 2-cocycle of the projective representation),
acting on $L^2(M)$.
\end{proposition}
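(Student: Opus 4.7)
The plan is to build a spatial isomorphism between the cyclic subspace $K=\overline{\pi(\Gamma)\xi}$ (with $M$ acting) and $L^2(vN_\omega(\Gamma))$ (with $vN_\omega(\Gamma)$ acting) that sends the canonical trace vector to $\xi/\|\xi\|$; once such a unitary is in place, the trace-vector property for $\xi$ on $K$ transports from the corresponding property of the canonical cyclic vector in $L^2$, and one then observes that it automatically extends to a trace-vector property on all of $\mathcal H$ because $K$ is $M$-invariant and contains $\xi$.

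First I would use the wandering hypothesis, together with the cocycle relation $\pi(\delta)\pi(\delta^{-1}\gamma)=\omega(\delta,\delta^{-1}\gamma)\pi(\gamma)$, to deduce that $\{\pi(\gamma)\xi\}_{\gamma\in\Gamma}$ is orthogonal with common norm $\|\xi\|$. Indeed, for $\gamma\neq\delta$ the operator $\pi(\delta)^*\pi(\gamma)$ is a unimodular scalar multiple of $\pi(\delta^{-1}\gamma)$, and pairing against $\xi$ gives $\langle\pi(\gamma)\xi,\pi(\delta)\xi\rangle=0$ by wandering. Thus $\{\|\xi\|^{-1}\pi(\gamma)\xi\}_{\gamma\in\Gamma}$ is an orthonormal basis for $K$. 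In parallel, the vectors $\{\lambda^\omega_\gamma\Omega\}_{\gamma\in\Gamma}$, where $\Omega$ denotes the image of $1\in vN_\omega(\Gamma)$ in $L^2$, form an orthonormal basis of $L^2(vN_\omega(\Gamma))$ because $\langle\lambda^\omega_\gamma\Omega,\lambda^\omega_\delta\Omega\rangle=tr((\lambda^\omega_\delta)^*\lambda^\omega_\gamma)=\delta_{\gamma,\delta}$ by the defining property of the trace on $vN_\omega(\Gamma)$ recorded in the excerpt.

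I would then define $U:L^2(vN_\omega(\Gamma))\to K$ on this basis by $U(\lambda^\omega_\gamma\Omega)=\|\xi\|^{-1}\pi(\gamma)\xi$ and extend by linearity and continuity to a unitary. A direct check with the twisted multiplication rule gives
\[
U\lambda^\omega_\delta(\lambda^\omega_\gamma\Omega)=\omega(\delta,\gamma)\,\|\xi\|^{-1}\pi(\delta\gamma)\xi=\pi(\delta)\,U(\lambda^\omega_\gamma\Omega),
\]
so $U\lambda^\omega_\delta=\pi(\delta)\,U$ for every $\delta\in\Gamma$. Taking double commutants, $U$ implements a spatial isomorphism of $vN_\omega(\Gamma)$ acting on $L^2$ with $M|_K$ acting on $K$, which is the second claim of the proposition. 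Moreover $U\Omega=\|\xi\|^{-1}\xi$, and $\Omega$ is manifestly a trace vector for $vN_\omega(\Gamma)$ on $L^2$ by construction of the inner product; transporting via $U$ shows that $\xi$ is a trace vector for $M$ on $K$. To pass from $K$ to $\mathcal H$, note that $K=\overline{M\xi}$ is $M$-invariant and contains $\xi$, so for any $a,b\in M$ both $b\xi$ and $ab\xi$ lie in $K$; therefore $\langle ab\xi,\xi\rangle$ depends only on the restrictions of $a,b$ to $K$, and the trace identity lifts verbatim from $M|_K$ to $M$ on $\mathcal H$.

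The only delicate point is the projective-cocycle bookkeeping. One must fix the standard normalisation $\omega(id,\gamma)=\omega(\gamma,id)=1$, verify that the unimodular scalars relating $\pi(\gamma)^{*}$ to $\pi(\gamma^{-1})$ cancel appropriately in the orthogonality computation, and check that the $\omega$-scalar produced by the twisted product on the $L^2$ side is exactly the one produced by $\pi(\delta)\pi(\gamma)$ on the $K$ side of the intertwining identity. None of this is deep, but it is where any error would hide; once it is dispatched, the whole proof is essentially a transparent application of GNS together with the wandering orthogonality.
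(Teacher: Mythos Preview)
Your argument is correct and is exactly the standard GNS-type construction one would expect here. The paper itself states this proposition without proof, treating it as a routine consequence of the definitions; your write-up supplies precisely the details the author omits, including the cocycle bookkeeping and the observation that the trace identity on $\overline{M\xi}$ automatically yields the trace identity for $M$ on all of $\mathcal H$.
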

\section{Proof of the main theorem.}
We will use the following easy result on Bergman space functions (``popping zeros''):
\begin{lemma}\label{popping} Let $f\in A^2_{s-2}$ be a nonzero function with a zero of order $k$ 
at $w$, i.e. $f^{(j)}(w)=0$ for $j=0,1,2,\cdots, k-1$ but $f^{(k)}(w)\neq 0$. Then the functions 
$(z-w)^{-j}f(z)$ for $j=1,2,\cdots,k$ are in $A^2_{s-2}$.

\end{lemma}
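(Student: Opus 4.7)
The proof is essentially a combination of a local removable-singularity observation with a trivial boundary-behavior estimate, so I would proceed as follows.

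First, factor out the zero: since $f$ is holomorphic with a zero of order exactly $k$ at $w$, there is a holomorphic function $g$ on the domain (either $\mathbb H$ or $\mathbb D$, whichever is in use) with $g(w)\ne 0$ and $f(z)=(z-w)^k g(z)$. Then for $1\le j\le k$,
\[
h_j(z):=\frac{f(z)}{(z-w)^j}=(z-w)^{k-j}g(z)
\]
is holomorphic on the whole domain. So the only thing left to verify is that $h_j$ lies in $L^2$ against the weighted measure.

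Second, split the domain. Choose $\rho>0$ small enough that the closed ball $\overline{B}_\rho(w)$ is compactly contained in the interior of the domain. On the complement of $B_\rho(w)$ we have $|z-w|\ge\rho$, hence $|h_j(z)|^2\le\rho^{-2j}|f(z)|^2$, and this is integrable because $f\in A^2_{s-2}$. On $B_\rho(w)$ the holomorphic function $h_j$ is continuous on the compact closure, hence bounded, say by $C$; meanwhile the weight (either $y^{s-2}$ on $\mathbb H$ or $(1-|z|^2)^{s-2}$ on $\mathbb D$) is continuous and bounded on that same compact set because $s>1$ and because $w$ sits in the interior where neither $y$ nor $1-|z|^2$ approaches the degenerate boundary value. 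Thus $\int_{B_\rho(w)}|h_j|^2\,d\mu_s\le C^2\mu_s(B_\rho(w))<\infty$.

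Adding the two contributions gives $h_j\in A^2_{s-2}$, which is the claim. There is really no hard step here: the argument rests on the obvious fact that dividing by $(z-w)^j$ only alters $f$ in a bounded way away from $w$, and the fact that near $w$ the quotient is already forced to be holomorphic by the assumed order of the zero, so the singular factor never actually produces a singularity. The only thing that could go wrong is if the weight $y^{s-2}$ blew up at $w$, but that only happens as $y\to 0$, which is at the boundary, not at an interior point.
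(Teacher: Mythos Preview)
Your proof is correct and is genuinely different from the paper's approach. The paper first uses the transitive action of $PSL_2(\mathbb R)$ on $\mathbb D$ (which acts unitarily on $A^2_{s-2}$) to reduce to the case $w=0$, and then computes directly in terms of the orthonormal basis $e_n(z)=c_n z^n$: writing $f=\sum_{n\ge k} c_n e_n$, it identifies $z^{-k}f(z)$ with $\sum_{n\ge 0} a_n c_{n+k} e_n$ for a bounded sequence $a_n$, hence square-summable coefficients. Your argument instead bypasses the basis entirely, splitting the integral into a small ball around $w$ (where the quotient is holomorphic and bounded, and the weight is bounded on compact subsets of the interior) and its complement (where $|z-w|^{-j}$ is bounded by $\rho^{-j}$). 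Your route is more elementary and works uniformly without the reduction to $w=0$; the paper's route has the minor advantage of giving an explicit $\ell^2$ bound in terms of the basis coefficients, which fits naturally with the surrounding computations in the paper.
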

\begin{proof} By the transitivity of the action of \pslr ($=SU(1,1)$) we may assume $z=0$.
Write $$f = \sum_{n=k}^\infty c_n e_n$$ where
 $\displaystyle e_n(z)=\sqrt{{\frac{s-1}{4\pi}}}\sqrt{\frac{s(s+1)....(s+n-1)}{n!}}z^n$ are the orthonormal basis constructed in \ref{onb}. 
We know that $c_n$ is square summable. The limit of the sequence
$$a_n=\sqrt{\frac{s(s+1)....(s+n+k-1)}{s(s+1)....(s+n-1)} \frac{n!}{(n+k)!}}$$
is $1$ so $a_n$ is bounded.
The holomorphic function $g(z)=z^{-k}f(z)$ has Taylor series $$\sum_{n=0}^\infty c_{n+k}z^{-k}e_{n+k}(z)=\sum_{n=0}^\infty a_nc_{n+k}e_{n}(z)$$
Thus  $g(z)\in A^2_{s-2}$ and so is $z^{j}g(z)$ for $1\leq j<k$.

\end{proof}
 Now let $\Gamma$ be an \emph{orderable}  Fuchsian group and let $\gamma \mapsto \pi_s(\gamma)$ be the (projective) unitary 
representation on $A^2_{s-2}$ that we have been considering.

( Recall that a group  $\Gamma$ is \emph{orderable} if it admits a total order $<$ which is invariant under
 left translation, i.e. $\alpha < \beta \iff \gamma\alpha<\gamma\beta$ for all $\gamma$. Free groups are orderable
 as are fundamental groups of surfaces-\cite{rolfsenwiest}.)

\begin{theorem}\label{wandering}
Suppose $O_1,O_2,\cdots, O_n$ are disjoint orbits in $\mathbb D$ of  $\Gamma$. 
Let $f\in A^2_{s-2}$ be non-zero, with a zero of  order at least $v_i$ on all points of $O_i$ 
Then there is a wandering subspace W 
of dimension  $t=\sum_{i=1}^nv_i$ for $\pi_s(\Gamma)$, and $\pi_s(\gamma)(f)\in W^\perp\quad \forall \gamma$.
\end{theorem}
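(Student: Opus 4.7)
The plan is to combine the popping zeros lemma with a group-equivariant Gram--Schmidt process driven by the left order on $\Gamma$.

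First I will pick a representative $w_i$ in each orbit $O_i$ and, applying \ref{popping}, form the $t$ functions $g_{i,j}(z) := f(z)/(z-w_i)^j$ in $A^2_{s-2}$ for $1\leq i \leq n$ and $1 \leq j \leq v_i$. Each $g_{i,j}$ still vanishes to order $\geq v_k$ on every $O_k$ with $k\neq i$ and to order $\geq v_i$ on $O_i\setminus\{w_i\}$, while its order at $w_i$ has dropped by exactly $j$.

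Next I will enumerate the vectors $V_\gamma := \pi_s(\gamma) f$ together with $V_\gamma^{i,j} := \pi_s(\gamma) g_{i,j}$ and impose a total order on the index set: the $V_\gamma$'s come first in any order, then the blocks $V_\gamma^{i,j}$ sorted by $(i,j)$ in any fixed order, and within each block by the left-invariant order on $\gamma\in \Gamma$. I will then perform Gram--Schmidt, understood in the limiting sense that $e_\gamma^{i,j}$ is the normalized projection of $V_\gamma^{i,j}$ onto the orthogonal complement of the closed linear span of all strictly earlier vectors. Because the order restricted to each block is left-invariant and $\pi_s$ is a projective unitary representation, this procedure should commute with the group action up to the cocycle: one checks by transfinite induction that $\pi_s(\delta) e_\gamma^{i,j} = \omega(\delta,\gamma)\, e_{\delta\gamma}^{i,j}$ for a phase $\omega(\delta,\gamma)$ coming from the $2$-cocycle of $\pi_s$.

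Setting $W := \operatorname{span}\{e_1^{i,j} : 1\leq i\leq n,\ 1\leq j\leq v_i\}$, wandering is then automatic: $\pi_s(\delta) e_1^{i,j}$ is a scalar multiple of $e_\delta^{i,j}$, which is orthogonal (by Gram--Schmidt) to every $e_1^{i',j'}$ as soon as $\delta\neq 1$. The condition $\pi_s(\gamma)f\in W^\perp$ also holds automatically, because $\pi_s(\gamma)f = V_\gamma$ lies in the closed span of the initial block, orthogonal to every Gram--Schmidt vector produced from later blocks.

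The main obstacle will be showing $\dim W = t$, i.e.\ that none of the $e_1^{i,j}$ collapses to zero. For this I plan to exhibit continuous linear functionals on $A^2_{s-2}$ (continuous by the reproducing kernel property of \ref{reproducing}) that vanish on every earlier vector but not on $g_{i,j}$. Evaluation of the $(v_i - j)$-th holomorphic derivative at $w_i$ is the natural choice: a Leibniz and chain-rule unwinding of $\pi_s(\gamma')$, together with the fact that $(\gamma')^{-1}(w_i)\in O_i$, shows each earlier $V_{\gamma'}$ or $V_{\gamma'}^{i',j'}$ vanishes to order strictly greater than $v_i - j$ at $w_i$ (provided the orbits contain no elliptic fixed points, as may be assumed in the setting of theorem \ref{ultimate}), whereas $g_{i,j}$ attains order exactly $v_i - j$ there. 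This distinguishes $g_{i,j}$ from all its predecessors, so the $e_1^{i,j}$ are nonzero and, being pairwise orthogonal by Gram--Schmidt, span a subspace of dimension $t$.
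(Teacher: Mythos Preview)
Your approach is essentially correct and shares the key ingredients with the paper's proof (the left order, the popping lemma, control of vanishing orders along the orbit), but the execution is more elaborate and a few small repairs are needed. First, the block order cannot be ``any fixed order'': your separating functional $\xi\mapsto\xi^{(v_i-j)}(w_i)$ does \emph{not} annihilate $g_{i,j'}=V_1^{i,j'}$ when $j'>j$, so within a fixed $i$ the blocks must be taken in increasing $j$. Second, you must first arrange (by further popping) that $f$ has order \emph{exactly} $v_i$ at $w_i$; otherwise $g_{i,j}$ has order $>v_i-j$ there and your functional kills it too. Third, the parenthetical about elliptic fixed points is unnecessary: a left-orderable group is torsion-free, so $\Gamma$ already acts freely on $\mathbb H$. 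Finally, ``transfinite induction'' is a misnomer since the order on $\Gamma$ is not a well-order, but this is harmless: the equivariance $\pi_s(\delta)e_\gamma^{i,j}\in\mathbb T\cdot e_{\delta\gamma}^{i,j}$ follows directly, with no induction, once one checks that $\pi_s(\delta)$ carries the closed span of predecessors of $V_\gamma^{i,j}$ onto that of $V_{\delta\gamma}^{i,j}$.

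The paper reaches the same conclusion more directly. It sets
\[
U=\{\xi:\xi^{(j)}(\gamma(z_i))=0\ \text{for}\ \gamma\le id,\ 0\le j<v_i\},\qquad
V=\{\xi:\xi^{(j)}(\gamma(z_i))=0\ \text{for}\ \gamma< id,\ 0\le j<v_i\},
\]
and takes $W=U^\perp\cap V$. Left-invariance of the order gives $\pi_s(\gamma^{-1})(V)\subseteq U$ for $\gamma<id$, which yields the wandering property in one line; $\pi_s(\gamma)f\in U$ gives $f\perp W$; and $\dim W\ge t$ follows because the $t$ functions $f/(z-z_i)^j$ lie in $V$ and are linearly independent modulo $U$. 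Your Gram--Schmidt construction effectively rediscovers this $W$ as the span of the $e_1^{i,j}$, at the cost of the equivariance bookkeeping; the paper's formulation in terms of vanishing conditions makes both the wandering property and the dimension count transparent.
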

\begin{proof}
To make the argument clear let us begin with the case of the orbit of a single point $z$, with $f$ having zeros at 
$\gamma(z) \quad \forall \gamma\in\Gamma$.  

Choose a left ordering $<$ of $\Gamma$ and  
define the closed subspaces $U$ and $V$ of $A^2_{s-2}$ to be  $$U=\{\xi |\xi(\gamma(z))=0 \mbox{ for }\gamma \leq id\}$$ and
$$V=\{\xi |\xi(\gamma(z))=0 \mbox{ for }\gamma <id\}$$

We will now show that  a vector in the orthogonal complement $U^\perp \cap V$ of $U$ in $V$ is a wandering vector for $\Gamma$.  

For suppose $\xi \in U^\perp \cap V$. Then for $\gamma<id$ and any other $\lambda \leq id$, 
$$\gamma \lambda \leq\gamma id =\gamma<id$$ so 
$$\pi_{s}(\gamma^{-1})\xi(\lambda(z))=\frac{1}{(cz+d)^s}\xi(\gamma\lambda(z))=0 \mbox{     (since }\xi \in V )$$ 
which means that $\pi_{s}(\gamma^{-1})\xi\in U$ and thus $$\langle \pi_{s}(\gamma^{-1})\xi,\xi\rangle=0$$ since $\xi\in U^\perp$. This also means, by unitarity,
 $$\langle \pi_{s}(\gamma)\xi,\xi\rangle=0$$
So $\xi$ is wandering.

Moreover for all $\gamma$, $\pi(\gamma)(f)$ vanishes on the entire orbit so it is in $U$, hence it is orthogonal to $\xi$.

So we just need to check that the containment of $U$ in $V$ is strict. For this, divide $f$ enough times by a linear
function vanishing at $z$ as in lemma \ref{popping}.

For the general case we essentially repeat the argument. Fix a "base point" $z_i$ in each $O_i$. Given $f$ satisfying the hypotheses of the theorem
let $$U=\{\xi |\xi^{(j)}(\gamma(z_i))=0 \mbox{ for }\gamma \leq id \quad\forall i\mbox{ and all } 0\leq j< v_i\}$$ and
$$V=\{\xi |\xi^{(j)}(\gamma(z_i))=0 \mbox{ for }\gamma < id \quad\forall i\mbox{ and all } 0\leq j<v_i.\}$$ 
Clearly $U\subseteq V$ and put $W=U^\perp\cap V$. We claim $W$ is a wandering subspace for $\pi_s(\Gamma)$.

 For suppose $\xi,\eta \in U^\perp \cap V$. Then for $\gamma<id$ and any other $\lambda \leq id$, 
$$\gamma \lambda \leq\gamma id =\gamma<id$$ so $$\pi_{s}(\gamma^{-1})\xi(\lambda(z_i))=\frac{1}{(cz+d)^s}\xi(\gamma\lambda(z_i))=0 \quad \forall i$$ 
The factor $\displaystyle \frac{1}{(cz+d)^s}$ doesn't change the nature of the zeros so $\pi_{s}(\gamma^{-1})\xi\in U$ and thus $$\langle \pi_{s}(\gamma^{-1})\xi,\eta\rangle=0$$ since $\eta\in U^\perp$. Which, since $\xi$ and $\eta$ are arbitrary in $W$, also means by unitarity that
 $$\langle \pi_{s}(\gamma)\xi,\eta\rangle=0 \mbox{ for \emph{all} }\gamma.$$
 
Thus $W$ is wandering.
Moreover $\pi_s(\gamma)(f)\in U$ so $\pi_s(\gamma)(f)\perp W \quad \forall \gamma \in \Gamma$. 

We will now show that the dimension of $W$ is at least  $t=\sum_{i=1}^nv_i$.
Wolog we may assume that, for each $i$,  the  order of the zero at $z_i$ of $f$ is exaclty $v_i$. (It suffices to apply
lemma \ref{popping} at each $z_i$ to the given non-zero $f$.) For such an $f$ we claim that the $t$ functions
$$\frac{f}{(z-z_i)^j}\mbox{ for  } i=1,\cdots, n \mbox{ and } j=1,\cdots v_i$$ are in $V$ and are linearly independent  modulo $U$.
They are in $V$ by \ref{popping} and the fact that the nature of the zeros of $f$ on the rest of the orbits is unchanged by
multiplication by powers of $z-z_i$. Suppose $a_{i,j}$ are constants with 
$$ \sum_{i,j} a_{i,j} \frac{f}{(z-z_i)^j}=g \mbox{ for some }g\in U$$
Then $g$ has a zero of order at least $v_i$ at $z_i$ so the meromorphic function $\displaystyle {g\over f}$ is holomorphic at each
$z_i$. This forces all the $a_{i,j}$ to be zero.

 \end{proof}
 
 We now deduce some consequences of theorem \ref{wandering}. We start with \ref{ultimate}
 which is the most straightforward. 
 (The condition on the freeness of the action on the orbit in theorem \ref{ultimate} is
significant. See remark \ref{freeorbit}.) For convenience of reading we recall \ref{ultimate}:

\begin{theorem} \label{ultimate2}Let $\Gamma$ be a Fuchsian group and $\Gamma(z)$ be an orbit in $\mathbb H$
containing no fixed points for any element of $\Gamma$. Then
there is a non-zero function in $A^2_{s-2}$  vanishing on $\Gamma(z)$ iff $$s>1+\frac{4\pi}{covolume(\Gamma)}$$
\end{theorem}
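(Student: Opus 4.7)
The ``if'' direction is exactly Proposition \ref{sample}, which supplies a nonzero $f \in A^2_{s-2}$ vanishing on $\Gamma(z)$ whenever $\dim_M(A^2_{s-2}) > 1$, i.e.\ $s > 1 + 4\pi/\mathrm{covolume}(\Gamma)$. So my task is the converse: assuming a nonzero $f \in A^2_{s-2}$ that vanishes on $\Gamma(z)$ exists, deduce $s > 1 + 4\pi/\mathrm{covolume}(\Gamma)$. The plan is to feed $f$ into Theorem \ref{wandering} to manufacture a wandering subspace, promote this (via Proposition \ref{standardsubspace}) to an $M$-submodule of $A^2_{s-2}$ of a precisely known von Neumann dimension, and then invoke Theorem \ref{vonNdimension} to convert a dimension inequality into the desired inequality on $s$.

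The first obstacle is that Theorem \ref{wandering} requires a left ordering, while a general Fuchsian group (e.g.\ $PSL_2(\mathbb Z)$) has torsion and is not left orderable. I would therefore pick a left-orderable subgroup $\Psi < \Gamma$ of finite index $n := [\Gamma : \Psi]$, guaranteed by \cite{HKS}. Because no element of $\Gamma$ fixes a point of $\Gamma(z)$, a short coset computation (using that $\gamma_j^{-1}\psi^{-1}\gamma_i$ would have to fix $z$) shows that the right coset representatives $\gamma_1,\ldots,\gamma_n$ of $\Psi \backslash \Gamma$ yield a decomposition
\[
\Gamma(z) = \Psi(\gamma_1 z) \sqcup \cdots \sqcup \Psi(\gamma_n z)
\]
into exactly $n$ distinct $\Psi$-orbits, on each of which $f$ still vanishes. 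Applying Theorem \ref{wandering} to $\Psi$ with these $n$ orbits and multiplicities $v_i = 1$ produces an $n$-dimensional wandering subspace $W \subset A^2_{s-2}$ for $\pi_s|_\Psi$, with $\pi_s(\psi)(f) \in W^\perp$ for every $\psi \in \Psi$.

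Now let $M_\Psi$ denote the II$_1$ factor generated by $\pi_s(\Psi)$ on $A^2_{s-2}$ (isomorphic to $vN_\omega(\Psi)$ since $\Psi$ remains icc). Choosing an orthonormal basis $\xi_1,\ldots,\xi_n$ of $W$, the remark following Definition \ref{wandering} gives orthogonality of the $\pi_s(\Psi)$-orbits of distinct $\xi_i$; Proposition \ref{standardsubspace} identifies each $\overline{M_\Psi \xi_i}$ isometrically with $L^2(M_\Psi)$, so $\dim_{M_\Psi}\overline{M_\Psi W} = n$. Because $\pi_s(\Psi)(f) \perp W$ is equivalent to $f \perp \overline{M_\Psi W}$, the $M_\Psi$-invariant orthogonal complement $(\overline{M_\Psi W})^\perp$ contains the nonzero vector $f$, and therefore contributes strictly positive von Neumann dimension. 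Combining this with Theorem \ref{vonNdimension} (applied to $\Psi$) and $\mathrm{covolume}(\Psi) = n \cdot \mathrm{covolume}(\Gamma)$ yields
\[
n \cdot \frac{s-1}{4\pi} \, \mathrm{covolume}(\Gamma) \;=\; \dim_{M_\Psi} A^2_{s-2} \;>\; n,
\]
which rearranges to the claimed $s > 1 + 4\pi/\mathrm{covolume}(\Gamma)$.

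The main obstacle I expect is the finite-index passage: confirming that freeness on the orbit really produces $n$ (not fewer) distinct $\Psi$-orbits so that Theorem \ref{wandering} delivers a wandering subspace of the full dimension $n$ matching the covolume ratio, and that strict non-vanishing of $f$ genuinely sharpens the dimension comparison to a strict inequality. The former is a routine coset stabilizer argument; the latter rests on the strict positivity of $\dim_{M_\Psi}$ on any nonzero $M_\Psi$-submodule, which is already implicit in Theorem \ref{couplingproperties}(ii).
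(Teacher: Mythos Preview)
Your proposal is correct and follows essentially the same route as the paper: pass to a finite-index left-orderable subgroup $\Psi$ via \cite{HKS}, use freeness on the orbit to split $\Gamma(z)$ into $n=[\Gamma:\Psi]$ disjoint $\Psi$-orbits, feed these into Theorem \ref{wandering} to obtain an $n$-dimensional wandering subspace $W$ with $f\perp \overline{M_\Psi W}$, and compare von Neumann dimensions to force the strict inequality on $s$. The only cosmetic difference is that the paper phrases the strictness step as a contradiction (if $s$ equalled the critical value, $\overline{M_\Psi W}$ would already exhaust $A^2_{s-2}$, leaving no room for $f$), whereas you argue directly that the nonzero $M_\Psi$-invariant complement contributes strictly positive dimension; these are the same observation.
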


\begin{proof} ($\implies$) By \cite{HKS} $\Gamma$ is either a free product 
of finitely many cyclic groups or has a surface group of finite index. Either way there is an orderable subgroup
$\Psi<\Gamma$ with $n=[\Gamma:\Psi]<\infty$. Let $<$ be a left ordering
on $\Psi$.
Suppose by way of contradiction that $f\in A^2_{s-2}$ is non-zero but vanishes
on $\Gamma(z)$. 


By the fixed point hypothesis,  $\Gamma(z)$  consists of $n$ disjoint $\Psi$-orbits so
apply theorem \ref{wandering} to obtain a wandering subspace of dimension at least $n$. Choosing an orthonormal basis we obtain  $n$ 
 vectors $\xi_i$ 
 so that if $M$ is the \tf $vN_\omega(\Psi)$ then each $\xi_i$ is a trace vector and the $M-$modules 
$\overline{M\xi_i}$ are mutually orthogonal, of von Neumann dimension one. Hence 
$n\leq\dim_M A^2_{s-2}=n \dim_{ vN_\omega(\Gamma)} A^2_{s-2}$  which forces 
$\dim_{ vN_\omega(\Gamma)} A^2_{s-2}\geq 1$,  in other words $\displaystyle s\geq1+\frac{4\pi}{covolume(\Gamma)}$. 
To see that $s$ cannot be equal to $\displaystyle 1+\frac{4\pi}{covolume(\Gamma)}$, observe that by \ref{wandering},
 $f$ itself is orthogonal to 
$\vgm(V^\perp\cap W)$ which already has von Neumann dimension equal to one, a contradiction.



 ($\impliedby$) See \ref{sample}.

\end{proof}

\begin{corollary} If $\Gamma$ is an arbitrary Fuchsian group then for all $z$,  $\Omega(\Gamma(z))$ is strictly greater
than $1$.

\end{corollary}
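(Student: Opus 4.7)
The plan is to reduce the general case to the free-orbit case already handled by Theorem \ref{ultimate2}. The obstacle to applying \ref{ultimate2} directly is that $z$ may be fixed by some elliptic element of $\Gamma$; but this can always be repaired by passing to a sufficiently small finite-index subgroup.

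First I would produce a torsion-free subgroup $\Gamma_0<\Gamma$ of finite index $n$. This is available either by Selberg's lemma applied to the finitely generated linear group $\Gamma$, or directly from the structural dichotomy already cited in the paper from \cite{HKS}: if $\Gamma$ is a free product of cyclic groups, take the kernel of the obvious map to the product of the finite cyclic factors; if $\Gamma$ has a surface group of finite index, that surface group is already torsion-free. In either case $\Gamma_0$ is discrete of covolume $n\cdot\mathrm{covolume}(\Gamma)<\infty$, hence a Fuchsian group. Being torsion-free, $\Gamma_0$ contains no elliptic elements, and every stabilizer $(\Gamma_0)_w$ for $w\in\mathbb{H}$ is trivial (since the only elements of $PSL_2(\mathbb{R})$ with interior fixed points are elliptic of finite order).

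Now fix any $z\in\mathbb{H}$. The orbit $\Gamma_0(z)$ contains no fixed points of nontrivial elements of $\Gamma_0$, so Theorem \ref{ultimate2} applies and yields
$$\Omega(\Gamma_0(z))=1+\frac{4\pi}{\mathrm{covolume}(\Gamma_0)}=1+\frac{4\pi}{n\cdot\mathrm{covolume}(\Gamma)}>1.$$
Since $\Gamma_0(z)\subseteq \Gamma(z)$, any nonzero $f\in A^2_{s-2}$ vanishing on $\Gamma(z)$ automatically vanishes on $\Gamma_0(z)$, so $X_{\Gamma(z)}\subseteq X_{\Gamma_0(z)}$. Taking infima reverses the inclusion to give $\Omega(\Gamma(z))\geq \Omega(\Gamma_0(z))>1$, which is the desired strict inequality.

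The only nontrivial input is the existence of the torsion-free finite-index subgroup $\Gamma_0$, and this is the main (and only) obstacle; once it is in hand, the corollary is a one-line consequence of Theorem \ref{ultimate2} together with the monotonicity of $\Omega$ under inclusion of zero sets. It is worth noting that this argument produces the quantitative bound $\Omega(\Gamma(z))\geq 1+\tfrac{4\pi}{n\cdot\mathrm{covolume}(\Gamma)}$, which is generally weaker than what one would expect to be sharp; obtaining the sharp value at fixed points of elliptic elements would require tracking the order of vanishing forced at $z$ by invariance under the stabilizer, and is not attempted here.
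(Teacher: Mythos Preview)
Your proof is correct and follows essentially the same route as the paper: pass to a finite-index subgroup $\Gamma_0$ that acts freely (the paper phrases this as ``left orderable'' via \cite{HKS}, you phrase it as ``torsion-free'' via Selberg or the same structural dichotomy, but these coincide for the purpose at hand), apply Theorem \ref{ultimate2} to $\Gamma_0(z)$, and then use the monotonicity $\Omega(\Gamma(z))\geq\Omega(\Gamma_0(z))$. Your write-up is in fact slightly more explicit than the paper's about why $\Gamma_0$ has no interior fixed points and about the quantitative bound obtained.
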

\begin{proof} If $\Gamma_0\subseteq \Gamma$ then $X_{\Gamma(z)}\subseteq X_{\Gamma_0(z)}$
so $\Omega(\Gamma_0(z))\leq \Omega(\Gamma(z))$. As before, \cite{HKS}, any Fuchsian group $\Gamma$ has a
left orderable subgroup $\Gamma_0$ of finite index and the von Neumann dimension multiplies by $[\Gamma:\Gamma_0]$ on restricting to
the $\Gamma_0$. So by part \ref{ultimate2}, $s\in \Omega(\Gamma_0(z))$ for $s$ sufficiently close to $1$.

\end{proof}

The next corollary can be proved by other means, e.g. equidistribution-see [] (McMullen)
\begin{corollary} For any Fuchsian group $\Gamma$ and any $z\in \mathbb D$,
$$\sum_{\gamma \in \Gamma} (1-|\gamma(z)|) \mbox {  diverges. }$$
\end{corollary}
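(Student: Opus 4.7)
The plan is to prove the contrapositive: if $\sum_{\gamma\in\Gamma}(1-|\gamma(z)|)$ converges, then one can contradict the previous corollary $\Omega(\Gamma(z))>1$.

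Suppose, for contradiction, that the sum converges. Since the stabilizer of $z$ in the Fuchsian group $\Gamma$ is finite (stabilizers of points in $\mathbb D$ under a discrete subgroup of $PSL_2(\mathbb R)$ are cyclic and finite), summing over $\gamma\in\Gamma$ amounts, up to a bounded multiplicative factor, to summing over distinct points of the orbit $\Gamma(z)$. Hence the sequence of orbit points satisfies the Blaschke condition $\sum_n(1-|z_n|)<\infty$.

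By the classical Szegő/Blaschke theorem recalled at the start of the introduction, any such sequence is the zero set of some nonzero $f\in H^2(\mathbb D)$. Since $H^2\subseteq A^2_\alpha$ for every $\alpha>-1$ (also noted in the introduction), this $f$ belongs to $A^2_{s-2}$ for every $s>1$, with $\Gamma(z)\subseteq Z_f$. Therefore $X_{\Gamma(z)}\supseteq(1,\infty)$, and consequently $\Omega(\Gamma(z))\leq 1$.

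This directly contradicts the previous corollary, which asserts $\Omega(\Gamma(z))>1$ strictly for every Fuchsian $\Gamma$ and every $z$. Hence the assumed convergence is impossible and the sum diverges. The only subtle point is the passage from the group-indexed sum to the sequence of orbit points; this is harmless because the order of the stabilizer is uniformly bounded along a single orbit (by the order of the stabilizer of $z$), so divergence of one sum is equivalent to divergence of the other. No further technical obstacle is expected; the whole content of the statement is loaded into the preceding corollary, which in turn rests on Theorem \ref{ultimate2} and the left-order technique.
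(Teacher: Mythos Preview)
Your proof is correct and follows essentially the same route as the paper: assume convergence, invoke the Blaschke/Szeg\H{o} criterion to produce a nonzero Hardy space function vanishing on $\Gamma(z)$, then use $H^2\subseteq A^2_\alpha$ for all $\alpha>-1$ to conclude $\Omega(\Gamma(z))\leq 1$, contradicting the preceding corollary. You are in fact slightly more careful than the paper in noting that the finite stabilizer makes the passage from the $\Gamma$-indexed sum to the orbit-point sum harmless.
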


\begin{proof} If the sum converged there would be a Hardy space function vanishing on $\Gamma(z)$ and Hardy
space is contained in the Bergman spaces.

\end{proof} 
In \cite{hkz} a density $D^+(S)$ called the ``upper asymptotic $\kappa$-density'' is defined for subsets $S$ of
the unit disc. It is shown on page 131 of that book that the condition $D^+ (S) \leq\frac{1+\alpha}{p}$ is necessary and the condition $D^+(S) < \frac{1+\alpha}{p}$ is sufficient for $A$ to be an $A^p_\alpha$-zero set.
\begin{corollary}\label{density} If $\Gamma$ and $z$ are as in \ref{ultimate} then
 $$D^+(\Gamma(z))=\frac{2\pi}{\mbox{covolume }(\Gamma)}$$
\end{corollary}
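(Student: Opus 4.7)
The plan is to combine Theorem \ref{ultimate2} with the characterization of Bergman zero sets from \cite{hkz} that is quoted just above the corollary statement. Specializing the \cite{hkz} criterion to $p=2$ and $\alpha=s-2$, the condition $D^+(S)\leq \tfrac{s-1}{2}$ is \emph{necessary} and the condition $D^+(S)< \tfrac{s-1}{2}$ is \emph{sufficient} for $S$ to be an $A^2_{s-2}$-zero set. Setting $s_c=1+\tfrac{4\pi}{\mathrm{covolume}(\Gamma)}$, this critical exponent satisfies $\tfrac{s_c-1}{2}=\tfrac{2\pi}{\mathrm{covolume}(\Gamma)}$, which is exactly the value we must identify with $D^+(\Gamma(z))$.

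First I would prove the upper bound. For each $s>s_c$, Theorem \ref{ultimate2} produces a non-zero $f\in A^2_{s-2}$ that vanishes on $\Gamma(z)$; applying the necessary half of the \cite{hkz} criterion gives $D^+(\Gamma(z))\leq \tfrac{s-1}{2}$. Letting $s\downarrow s_c$ yields $D^+(\Gamma(z))\leq \tfrac{2\pi}{\mathrm{covolume}(\Gamma)}$.

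Next I would prove the matching lower bound. For each $s<s_c$, Theorem \ref{ultimate2} guarantees that no non-zero element of $A^2_{s-2}$ vanishes on $\Gamma(z)$, so the sufficient half of the \cite{hkz} criterion must fail, i.e., $D^+(\Gamma(z))\geq \tfrac{s-1}{2}$. Letting $s\uparrow s_c$ gives $D^+(\Gamma(z))\geq \tfrac{2\pi}{\mathrm{covolume}(\Gamma)}$, and combining the two inequalities proves the equality.

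The argument is essentially a one-line squeeze once the two ingredients are lined up, so there is no serious obstacle; the only point that requires care is checking that the conventions match — the parameter identification $\alpha=s-2$, $p=2$, and the fixed-point-free hypothesis of \ref{ultimate2} is precisely what lets us invoke \emph{both} directions of Theorem \ref{ultimate2} at all $s$ in a punctured neighborhood of $s_c$. Note also that because the critical value itself is excluded on both sides of \cite{hkz}, one cannot avoid the limiting step; however, monotonicity of $s\mapsto \tfrac{s-1}{2}$ makes the passage to the limit immediate.
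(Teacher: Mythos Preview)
Your proposal is correct and is essentially the same argument as the paper's, which compresses the two-sided squeeze into a single line (``putting $p=2$ \ldots\ we get $D^+(\Gamma(z))=\frac{1+\alpha}{2}=\frac{s-1}{2}=\frac{2\pi}{\mathrm{covolume}(\Gamma)}$''); you have simply made explicit the upper and lower limiting steps that the paper leaves implicit. One tiny point worth noting---though the paper glosses over it as well---is that Theorem~\ref{ultimate2} only gives $\Gamma(z)\subseteq Z_f$, not equality, so to invoke the necessary half of the \cite{hkz} criterion you are tacitly using monotonicity of $D^+$ under inclusion (or the fact, discussed in the introduction, that away from the critical parameter subsets of zero sets are themselves zero sets).
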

\begin{proof}
Putting $p=2$ in the condition from \cite{hkz} above we get
 $D^+ (\Gamma(z))=\frac{1+\alpha}{2}=\frac{s-1}{2}=\frac{2\pi}{\mbox{covolume }(\Gamma)}$ from \ref{ultimate}.
\end{proof}
\begin{remark} Once we have $D^+ (\Gamma(z))$ we know when $\Gamma(z)$ is an  $A^p_\alpha$-zero set 
for all $p$ by \cite{hkz}. Thus the $L^2$ methods of this paper solve, thanks to \cite{hkz}, an $L^p$ problem for all $p$.
\end{remark}

\section{Use of cusp forms.}

Let us restrict initially to the case $\Gamma=$\pslz. A cusp form of weight $p$ is a function $f:\mathbb H\rightarrow \mathbb C$ which is holomorphic and satisfies $$f(\gamma(z))=(cz+d)^pf(z)$$ which means
 that $f(z+1)=f(z)$ so that we may write $f$ as a function of $q=e^{2\pi i z}$. The cusp form condition is then that 
 $$f(z)=\sum_{n=1}^\infty a_nq^n$$
 
 The first thing to observe is that   $$|f(z)|\leq (Constant)(Im z)^{-p/2}.$$  To see this note that
 $|f(z)|Im(z)^{p/2}$ is invariant under the action
 of \pslz. (Follows from modularity of $f$ and proposition \ref{imaginarybehaviour}.) But since $f(z)=qg(z)$ with $g$ having a finite
 limit as $q\rightarrow 0$, $|f(z)|Im(z)^{p/2}$ is bounded on a fundamental domain, hence everywhere. 
 
 The first cusp form is the modular discriminant $\Delta(z)$ of weight 12 which a function of $q$ can be
 written $\displaystyle q\prod_{n=1}^\infty(1-q^n)^{24}$. It is the 24th power of the Dedekind $\eta$ function.
 Cusp forms give a graded algebra under multiplication and
 can be multiplied by modular forms (same invariance as cusp forms but don't vanish at $\infty$) to give other
 cusp forms. See \cite{serre}.

 Now let $\Gamma$ be an arbitrary Fuchsian group and, following some authors (\cite{zagier}),  we say a cusp form of weight $p$ is a holomorphic function $f:\mathbb H\rightarrow \mathbb C$ such
 that $f(\gamma(z))=(cz+d)^pf(z)$ and $|f(z)|\leq (Constant)(Im z)^{-p/2}.$

 \begin{proposition} \label{bounded} If $f$ is a cusp form of (integer) weight $p$ let $M_f: L^2(\mathbb H, y^{s-2}dxdy)\rightarrow L^2(\mathbb H, y^{s+p-2}dxdy)$
 be the operator of multiplication by $f$. Then $M_f$ is a bounded linear operator intertwining the actions of $\check \pi _{s+p}(\gamma)$
 and $\check \pi _{s}(\gamma)$ , and preserving the
 subspace of holomorphic functions. Also $M_f^*(\xi)(z)=Im(z)^{p}\overline{f(z)}\xi(z)$
 \end{proposition}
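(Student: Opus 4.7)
The plan is to verify the four assertions in turn, essentially by direct computation, with only the bound on $\|M_f\|$ relying on anything non-formal.

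First I would handle boundedness. The key input is the pointwise estimate $|f(z)|\leq C\, y^{-p/2}$ recalled just before the proposition (which follows from \ref{imaginarybehaviour} and the boundedness of $|f(z)|\,y^{p/2}$ on a fundamental domain). Then for $\xi\in L^2(\mathbb H, y^{s-2}dx\,dy)$,
$$\|M_f\xi\|_{s+p}^2 = \int_{\mathbb H} |f(z)|^2|\xi(z)|^2\, y^{s+p-2}\,dx\,dy \leq C^2\int_{\mathbb H} |\xi(z)|^2\, y^{s-2}\,dx\,dy = C^2\|\xi\|_s^2,$$
giving $\|M_f\|\leq C$. Preservation of the holomorphic subspace is then immediate since $f$ itself is holomorphic.

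Next I would check the intertwining. Writing $g=\begin{pmatrix}a&b\\c&d\end{pmatrix}$ and applying the definition of $\check\pi$ together with the modularity $f(g(z))=(cz+d)^p f(z)$, I compute
$$(\check\pi_{s+p}(g^{-1})M_f\xi)(z) = \frac{f(g(z))\,\xi(g(z))}{(cz+d)^{s+p}} = \frac{(cz+d)^p f(z)\,\xi(g(z))}{(cz+d)^{s+p}} = f(z)\cdot\frac{\xi(g(z))}{(cz+d)^s} = (M_f\check\pi_s(g^{-1})\xi)(z).$$
Because $p$ is an integer, the factor $(cz+d)^p$ is unambiguous, so the factorization $(cz+d)^{s+p}=(cz+d)^s(cz+d)^p$ holds for \emph{any} choice of branch of the logarithm used to define $(cz+d)^s$. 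In particular the intertwining is exact, with no projective scalar ambiguity -- which is exactly what we need if we eventually want to view $M_f$ as intertwining two $vN_\omega(\Gamma)$-actions with the \emph{same} cocycle $\omega$.

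Finally, for the adjoint formula I would compare inner products: for $\xi$ in $L^2(\mathbb H, y^{s-2}dx\,dy)$ and $\eta$ in $L^2(\mathbb H, y^{s+p-2}dx\,dy)$,
$$\langle M_f\xi,\eta\rangle_{s+p} = \int_{\mathbb H} f(z)\xi(z)\overline{\eta(z)}\, y^{s+p-2}\,dx\,dy = \int_{\mathbb H} \xi(z)\,\overline{y^p\overline{f(z)}\eta(z)}\, y^{s-2}\,dx\,dy = \langle \xi, M_f^*\eta\rangle_s,$$
from which $M_f^*\eta(z)=y^p\overline{f(z)}\eta(z)$ can be read off. The only substantive step is the boundedness estimate, and that is already handed to us by the cusp-form growth bound; I do not expect any real obstacle.
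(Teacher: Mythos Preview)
Your proof is correct and follows essentially the same route as the paper's: the boundedness via the cusp-form growth estimate $|f(z)|\le C\,y^{-p/2}$, the intertwining via modularity of $f$, and the adjoint via direct comparison of inner products are all carried out in the paper by the same direct computations. Your added remark about the branch of $(cz+d)^{s+p}$ splitting cleanly when $p$ is an integer is a helpful clarification that the paper leaves implicit.
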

 \begin{proof}
 Boundedness: For $\xi\in L^2(\mathbb H, y^{s-2}dxdy)$, using the above bound on $|f(z)|$, 
  $$||M_f\xi||^2=\int_{\mathbb H}|f(z)|^2|\xi(z)|^2y^{s+p-2}dxdy\leq (Constant)\int_{\mathbb H} |\xi(z)|^2y^{s-2}dxdy.$$
 Also
 $$\check \pi _{s+p}(\gamma ^{-1})(M_f \xi)(z)=\frac{1}{(cz+d)^{s+p}}f(\gamma (z))\xi(\gamma (z))=\frac{(cz+d)^p}{(cz+d)^{s+p}}f(z)\xi(\gamma (z))$$
 $$=M_f (\check \pi_s(\gamma^{-1}) \xi)(z).$$
  
 And finally
 $$\langle M_f\xi,\eta\rangle=\int_{\mathbb H} f(z)\xi(z)\overline{\eta(z)}y^{s+p-2}dxdy=\int_{\mathbb H} \xi(z)\overline{\overline{f(z)}\eta(z)}Im(z)^py^{s-2}dxdy$$ which is the formula given in the statement of the 
 proposition for $M_f^*$.
 \end{proof}

\begin{definition}\label{cuspbounded} If $f$ is a cusp form of weight $p$ we call $T_f$ the operator from $A^2_\alpha$ to $A^2_{\alpha+p}$
given by $$T_f=PM_f$$ where $P$ is orthogonal projection from $\xi\in L^2(\mathbb H, y^{\alpha+p}dxdy)$ onto
Bergman space.
\end{definition}
We saw above that for a Fuchsian group $\Gamma$  there is, for $s$ large enough and any $z$, simply because of von Neumann dimension, a function
$f\in A^2_{s-2}$ vanishing on $\Gamma(z)$. But the von Neumann dimension is a blunt tool and is of no help whatsoever
in finding such functions.
Cusp forms give us \textbf{explicit} functions in Bergman spaces vanishing on orbits under $\Gamma$.
 Indeed if $f$ is a cusp form of
weight $p$ vanishing at $z\in \mathbb H$, and $\xi\in A^2_{s-2}$ then by \ref{bounded}, $T_f \xi$ is in $\mathcal {H}_{s+p}$ and vanishes
on $\Gamma(z)$.  
 For $\Gamma =$\pslz this shows that there are elements of $A^2_{s-2}$ vanishing at $\Gamma(e^{i\pi/3})$ provided $s>17$. This is because
the Eisenstein series $G_2$ is a modular form of weight 4 vanishing at  $e^{i\pi/3}$ so that $\Delta G_2$ is a cusp form of weight $16$ vanishing at
$e^{i\pi/3}$. Elements of $A^2_{s-2}$ for $s>1$ may be multiplied by $G_2\Delta$ to give the required Bergman space functions.

L. Rolen and I. Wagner have improved this method considerably (\cite{RW}) to get explicit elements of $A^2_{s-2}$ vanishing on  \pslz$(z)$ for any $s>13$:
Begin with the modular function $j(z)$ and choose any $w\in \mathbb H$. Then $j(z)-w$ is a holomorphic function that vanishes exactly on the \pslz  \hspace{2pt} orbit of a $z_0$ with 
$j(z_0)=w$. Now multiply by $\Delta(z)$ to obtain a modular form vanishing on the same set. Then choose a branch of $\eta(z)^r$ for $r$ real, small and positive.
Then the product $f(z)=(j(z)-w)\Delta \eta(z)^r$ satisifies $|f(\gamma (z)| Im(z)^{6+r/4}=|f(z)| $. Since $(j(z)-w)\Delta(z)$ has a limit as $q\rightarrow 0$ and 
$|\eta(z)|$ tends to zero as $Im z$ grows,  $|f(\gamma (z)| Im(z)^{6+r/4}$ is bounded on a fundamental domain and hence 
$$|f(z)|\leq (constant) Im(z)^{-(6+r/4)}$$
Thus as before, multiplication by $f$ defines a bounded operator from $A_{s-2}$ to $A_{s+10+r/2}$. Choosing $s$ close to $1$ and $r$ close to zero
we get  an \emph{explicit} element of $A^2_{t-2}$ whose zero set is \emph{exactly} the orbit \pslz$(z_0)$ for any $t>13$.


\begin{remark}\label{freeorbit} \rm{Here is an example showing that the freeness of the action on the orbit of $z$ is
essential. Let $G_2$ be the Eisenstein series modular form for $\Gamma=$\pslz of (smallest) weight 4. Then $G_2(e^{\frac{\pi i}{3}} )=0$ so $G_2$
vanishes on the $\Gamma$ orbit of $e^{\frac{\pi i}{3}}$. Using the same trick as above, multiply $G_2$ by
some branch of $\eta(z)^r$ for $r$ real, small and positive. The resulting holomorphic function $f$ will satisfy
$$|f(z)|\leq (constant) Im(z)^{-(2+r/4)}$$ and so defines by multiplication a bounded map from 
$A_\alpha$ to $A_{\alpha+4+r/2}$. So if $s$ is slightly bigger than $1$ we obtain elements of  $A_{3+\epsilon}$
vanishing exactly on the $\Gamma$ orbit of $e^{\frac{\pi i}{3}}$ for all $\epsilon >0$.

}
\end{remark}
\begin{remark}\rm{
For a cusp forms $f$ of weight $p$ the operator $T_f$ is $M$-linear where $M=\vgm$ so if we let $M$ acting
diagonally on the direct sum 
$\displaystyle \oplus_{n=0}^\infty A_{s-2+np}$
  of Bergman spaces, the $T_f$ define operators in the commutant which is a II$_\infty$ factor.
  But we can also think of $T_f$ as a map between Bergman spaces intertwining the action of $M$.
  \begin{proposition} The closure of $T_fA^2_{s-2}$ is an $M$-module of von Neumann dimension
  equal to that of $A^2_{s-2}$.
  
  \end{proposition}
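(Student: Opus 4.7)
The plan is to prove this by a straightforward polar decomposition argument, with the key observation being that $T_f$ is actually injective.

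First, I would unpack the definition of $T_f$. For $\xi\in A^2_{s-2}$, the product $f\xi$ is the product of two holomorphic functions, hence holomorphic. By Proposition \ref{bounded}, $M_f\xi$ already lies in $L^2(\mathbb H, y^{s+p-2}dxdy)$, and since it is holomorphic it already lies in the Bergman space $A^2_{s+p-2}$. Therefore the orthogonal projection $P$ acts trivially on $M_f\xi$, and $T_f\xi = f\xi$. Since $f$ is a nonzero holomorphic function on the connected domain $\mathbb H$, multiplication by $f$ is injective on holomorphic functions (by the identity principle, $f\xi=0$ forces $\xi=0$). Thus $T_f$ has trivial kernel.

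Next, I would use the polar decomposition $T_f = U|T_f|$, where $|T_f| = (T_f^*T_f)^{1/2}$ acts on $A^2_{s-2}$ and $U$ is a partial isometry from $A^2_{s-2}$ to $A^2_{s+p-2}$. Because the weight $p$ is an integer, $(cz+d)^p$ is single-valued, so the cocycles defining $\check\pi_s$ and $\check\pi_{s+p}$ coincide, and both Bergman spaces are modules over the same $M=\vgm$. By Proposition \ref{bounded}, $T_f$ intertwines these two $M$-actions; hence $T_f^*T_f$ lies in the commutant of $M$ on $A^2_{s-2}$, so $|T_f|$ is $M$-linear, and the partial isometry $U$ (uniquely determined by requiring it to vanish on $\ker T_f$) is $M$-linear as well. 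Since $\ker T_f = 0$, the initial space $(\ker|T_f|)^\perp$ of $U$ is all of $A^2_{s-2}$, and the final space is $\overline{T_fA^2_{s-2}}$.

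Consequently, $U$ exhibits an $M$-linear unitary equivalence between the $M$-module $A^2_{s-2}$ and the $M$-module $\overline{T_fA^2_{s-2}}$. By property (\romannumeral 2) of Theorem \ref{couplingproperties}, unitarily equivalent $M$-modules have the same von Neumann dimension, which gives the conclusion.

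The only potential obstacle is really the bookkeeping with cocycles, ensuring that $T_f$ is genuinely a map of $M$-modules for the same von Neumann algebra $M$; this is handled by the integrality of the weight $p$. Everything else reduces to the observation $T_f\xi = f\xi$ and the standard fact that the polar decomposition of an intertwiner yields an intertwining partial isometry.
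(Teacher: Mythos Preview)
Your proof is correct and follows exactly the approach the paper takes: the paper's proof is the single sentence ``This is trivial since multiplication by a non-zero holomorphic function is injective so the polar decomposition of $T_f$ gives a unitary equivalence,'' and you have simply unpacked each step of that sentence, including the observation that $T_f\xi=f\xi$ on holomorphic $\xi$, the injectivity of multiplication by a nonzero holomorphic function, and the $M$-linearity of the partial isometry in the polar decomposition.
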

  \begin{proof} This is trivial since multiplication by a non-zero holomorphic function is injective so the polar
  decomposition of $T_f$ gives a unitary equivalence. 
  \end{proof}
  Here is a simple consequence of von Neumann dimension in the spirit of proposition \ref{sample}.
  \begin{corollary} For any $s>1$ and every cusp form $f$ of weight $p$ there is a $\xi \in A^2_{s+p-2}$ which
  is orthogonal to $f\eta$ for all $\eta\in A^2_{s-2}$.
  \end{corollary}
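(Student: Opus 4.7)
The strategy is to apply the preceding proposition together with Theorem \ref{vonNdimension} to compare von Neumann dimensions. Let $M = vN_\omega(\Gamma)$, which acts via $\pi_s$ on $A^2_{s-2}$ and via $\pi_{s+p}$ on $A^2_{s+p-2}$. By Proposition \ref{bounded}, the operator $T_f = PM_f$ is $M$-linear between these two Bergman spaces, so its image (and hence its closure $K$) is an $M$-submodule of $A^2_{s+p-2}$.

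The plan is then to observe that, by the proposition immediately preceding this corollary,
$$\dim_M K = \dim_M A^2_{s-2} = \frac{s-1}{4\pi}\,\mathrm{covolume}(\Gamma),$$
while Theorem \ref{vonNdimension} gives
$$\dim_M A^2_{s+p-2} = \frac{s+p-1}{4\pi}\,\mathrm{covolume}(\Gamma).$$
Since the weight $p$ of a (non-trivial) cusp form is strictly positive, the second quantity strictly exceeds the first. By additivity of von Neumann dimension over orthogonal direct sums (property (iii) of Theorem \ref{couplingproperties}), the orthogonal complement $K^\perp$ in $A^2_{s+p-2}$ is a non-zero $M$-module, so we may pick any non-zero $\xi \in K^\perp$.

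It remains only to check that such a $\xi$ is orthogonal to $f\eta = M_f\eta$ for every $\eta \in A^2_{s-2}$, not merely to $T_f\eta = PM_f\eta$. But this is automatic: $\xi$ already lies in the Bergman space $A^2_{s+p-2}$, so $P\xi = \xi$, and hence $\langle \xi, M_f\eta\rangle = \langle P\xi, M_f\eta\rangle = \langle \xi, PM_f\eta\rangle = \langle \xi, T_f\eta\rangle = 0$.

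There is no real obstacle here once the preceding proposition is in hand; the only point requiring a moment's care is the self-adjointness of $P$ that lets us pass from orthogonality against $T_f\eta$ to orthogonality against $f\eta$. The content of the corollary lies in the strict inequality of von Neumann dimensions, which in turn rests on the positivity of the weight $p$ and on the dimension formula of Theorem \ref{vonNdimension}.
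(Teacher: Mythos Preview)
Your proof is correct and is exactly the argument the paper intends: compare the von Neumann dimension of the closure of $T_fA^2_{s-2}$ (equal to $\dim_M A^2_{s-2}$ by the preceding proposition) with $\dim_M A^2_{s+p-2}$ via Theorem \ref{vonNdimension}, and use $p>0$ to get a strict gap. The only superfluous step is your final paragraph: Proposition \ref{bounded} already says $M_f$ preserves holomorphic functions, so for $\eta\in A^2_{s-2}$ one has $T_f\eta=PM_f\eta=M_f\eta=f\eta$ outright, and no appeal to the self-adjointness of $P$ is needed.
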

  
  C. Mc Mullen pointed out that this result is trivial if $f$ has zeros since then the Bergman reproducing kernel 
  vector $\epsilon_z$ is automatically orthogonal to $f\eta$ for all $\eta\in A^2_{s-2}$. For a cusp form with no
  zeros, like $\Delta$ we 
  have not seen a constructive proof.
  }
\end{remark}

\section{Fixed points}
It is possible to improve on theorem \ref{ultimate} by a closer analysis of the orbits of an orderable subgroup
of finite index. We are guided by example \ref{freeorbit}. 
If $\Gamma$ is a Fuchsian group, the stabiliser of any point $z\in \mathbb H$ is finite and cyclic. Denote by $stab_i$ the stabiliser of  a point in an orbit $O_i$ (defined up to conjugacy).

\begin{theorem}\label{fixed} If $\Gamma$ is any Fuchsian group and $O_1,O_2,\cdots, O_n$ are disjoint orbits in $\mathbb D$ of $\Gamma$. 
Then there is a non-zero function in $A^2_{s-2}$ with a zero of  order at least $v_i$ on all points of $O_i$ iff $$s>1+\frac{4\pi}{covolume(\Gamma)}\sum_i \frac{v_i}{|stab_i|}$$
\end{theorem}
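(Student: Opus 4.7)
The plan is to mirror the proof of Theorem \ref{ultimate} while tracking how each stabiliser decomposes the jets at its fixed point into characters. Let $M = vN_\omega(\Gamma)$ and let $U\subset A^2_{s-2}$ be the closed subspace of functions vanishing to order at least $v_i$ on every point of $O_i$. Its orthogonal complement is the closed $M$-invariant span $W$ of the jet reproducing vectors $\epsilon^{(j)}_w$ for $w\in O_i$, $0\le j<v_i$. Choosing base points $z_i\in O_i$, one has $W=\overline{\mathrm{span}}\{\pi_s(\gamma)\epsilon^{(j)}_{z_i}\}$, so the $(\impliedby)$ direction reduces to showing $\dim_M W\le \sum_i v_i/|stab_i|$; combined with $\dim_M A^2_{s-2}=(s-1)\,covolume(\Gamma)/(4\pi)$ from Theorem \ref{vonNdimension}, this yields a nonzero $f\in U$ whenever $s>1+\frac{4\pi}{covolume(\Gamma)}\sum_i v_i/|stab_i|$.

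For the bound on $\dim_M W$, I would observe that the jet space $J_i := \mathrm{span}\{\epsilon^{(j)}_{z_i}:0\le j<v_i\}$ is preserved by the (projective) action of $H_i := stab_i$, which an elementary Leibniz calculation shows is upper triangular in this basis. As $H_i$ is finite cyclic, $\omega|_{H_i}$ is a coboundary, so the restriction of $\pi_s$ to $H_i$ rescales to an honest unitary representation of $H_i$; $J_i$ then diagonalises as a direct sum of $v_i$ character eigenlines $\mathbb C\eta_{i,k}$. For each character $\chi$, the spectral projection $p_\chi := \frac{1}{|H_i|}\sum_{\gamma\in H_i}\overline{\chi(\gamma)}v_\gamma$ (using the rescaled unitaries $v_\gamma$) lies in $M$, is a self-adjoint idempotent, and has $tr_M(p_\chi)=1/|H_i|$ directly from the defining formula of the trace on $vN_\omega(\Gamma)$. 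Property (\romannumeral 10) of Theorem \ref{couplingproperties} applied to each $\eta_{i,k}$ then gives $\dim_M \overline{M\eta_{i,k}}\le 1/|H_i|$, and subadditivity of von Neumann dimension over the finite (not necessarily orthogonal) sum $W=\sum_{i,k}\overline{M\eta_{i,k}}$ produces the desired bound.

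The converse direction parallels the proof of Theorem \ref{ultimate}: pick a left-orderable finite-index subgroup $\Psi<\Gamma$ of index $n$ (exists by \cite{HKS}), noting that left-orderability forces torsion-freeness, so $\Psi$ acts freely on $\mathbb H$. A double coset computation using $\gamma^{-1}\Psi\gamma\cap H_i=\{1\}$ shows $|\Psi\backslash\Gamma/H_i|=n/|H_i|$, so each $O_i$ splits into that many free $\Psi$-orbits. Given a nonzero $f$ with the prescribed zeros, Theorem \ref{wandering} applied to the collection of $\sum_i n v_i/|stab_i|$ free $\Psi$-orbits produces a wandering subspace $W'$ for $\pi_s|_\Psi$ of that dimension to which every $\pi_s(\psi)f$, $\psi\in\Psi$, is orthogonal. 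By Proposition \ref{standardsubspace} the closed $vN_\omega(\Psi)$-span of $W'$ decomposes into mutually orthogonal $vN_\omega(\Psi)$-submodules of von Neumann dimension one, so has total dimension $\sum_i n v_i/|stab_i|$, and its orthogonal complement contains $f\neq 0$; hence $\dim_{vN_\omega(\Psi)} A^2_{s-2}>\sum_i n v_i/|stab_i|$, and dividing by $n$ (using the standard index relation between $\dim_{vN_\omega(\Psi)}$ and $\dim_{vN_\omega(\Gamma)}$ on any $M$-module) gives the strict form of the desired inequality.

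The main obstacle will be the jet-space step in $(\impliedby)$: one must honestly verify that the spectral projections $p_\chi$ belong to $M$ with the correct trace despite the projective cocycle, and that subadditivity of von Neumann dimension applies to the non-orthogonal finite sum defining $W$. Both issues hinge on $\omega|_{H_i}$ being a coboundary on each finite cyclic $H_i$, which permits passage to an honest representation on which classical finite-group character theory applies.
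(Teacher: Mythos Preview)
Your proposal is correct and follows essentially the same route as the paper's proof: the ($\impliedby$) direction bounds the von Neumann dimension of the jet span via stabilizer eigenprojections of trace $1/|stab_i|$ together with property (\romannumeral 10) of Theorem \ref{couplingproperties}, while the ($\implies$) direction passes to a torsion-free finite-index orderable subgroup $\Psi$, counts the resulting free $\Psi$-orbits, and applies Theorem \ref{wandering}. Your diagonalisation of the triangular $H_i$-action on the jet space $J_i$ is in fact slightly more careful than the paper, which asserts directly that each $\epsilon^j_i$ is a $u_i$-eigenvector (this is only literally true in a coordinate centred at $z_i$, whereas your argument works in any coordinate).
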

\begin{proof} ($\implies$)Choose as in \ref{ultimate} an orderable subgroup
$\Psi<\Gamma$ with $n=[\Gamma:\Psi]<\infty$. The action of the stabiliser of a point in $\mathbb H$ on $\Gamma/\Psi$ is 
free since if $\gamma (\mu\Psi)=\mu\Psi$ then $\gamma\mu=\mu\psi$ for some $\psi\in \Psi$ so $\gamma$ is conjugate to an element
of $\Psi$, but  the stabiliser is of finite order and $\Psi$ is torsion free. The action of $\Gamma$ on  $O_i$
is the same as the action on $\Gamma/ stab_i$. But the spaces $(stab_i\backslash \Gamma)/ \Psi$  and
$stab_i\backslash( \Gamma/ \Psi)$ are the same so  there are 
$\displaystyle\frac{[\Gamma:\Psi]}{|stab_i|}$ disjoint orbits of $\Psi$ in $O_i$. Thinking of $A^2_{s-2}$ as
a representation of  $M=vN_\omega(\Psi)$, the  orbit $O_i$ thus contributes 
$\displaystyle v_i\frac{[\Gamma:\Psi]}{|stab_i|}$
mutually $\Psi$-orthogonal trace vectors for $M$ by \ref{wandering}. Thus 
$$dim_M(A^2_{s-2})=\frac{s-1}{4\pi}covolume (\Gamma)[\Gamma:\Psi]\geq [\Gamma:\Psi]\sum_i \frac{v_i}{|stab_i|}.$$
Moreover as before the function  in the statement of the theorem vanishing on the orbit is actually orthogonal to the 
$M$-linear span of the trace vectors so the inequality is strict.

($\impliedby$) For each $i$ choose $z_i\in O_i$ and let $\epsilon^j_i$ be vectors such that 
$$\langle f,\epsilon^j_i\rangle =f^{(j)}(z_i) \mbox{  for each } 0\leq j\leq v_{i-1}$$
If $\gamma_i$ generates the stabiliser of $z_i$ we can clearly arrange the cocycle $\omega$ of the projective
repersentation $\pi_s$  so that $u_i^{|stab_i|}=1$,
$u_i$ being $\pi_s(\gamma_i)$. Moreover it is clear that $u_i\epsilon^j_i$ is a multiple of $\epsilon^j_i$, necessarily
by an $nth.$ root of unity so that the $u_i\epsilon^j_i$ are in eigenspaces of the$u_i$. Hence they are in the
image of projections in $\vgm$ of trace $\frac{1}{|stab_i|}$. Hence by (\romannumeral10) of \ref{couplingproperties}
we have $$dim_{{\vgm}}(\overline{{\vgm}\epsilon^j_i})\leq\frac{1}{|stab_i|}$$
Since the von Neumann dimension is subadditive, summing over $i$ and $j$ we get
$$\sum_{i,j}dim_{{\vgm}}(\overline{{\vgm}\epsilon^j_i})\leq \sum_i \frac{v_i}{|stab_i|}$$
which by hypothesis is less than $dim_{\vgm }A^2_{s-2}$.  

So there is a function $\xi\in A^2_{s-2}$ which is orthogonal to all the $\pi_s(\gamma)\epsilon^j_i$.
This means that $\xi$ vanishes to order at least $v_i$  on each $O_i$.
\end{proof}
We can now extend the calculation in \ref{density} of the density $D^+$ to all orbits of all Fuchsian groups.

\begin{corollary}  If $\Gamma$, a Fuchsian group,  and $z\in\mathbb D$ are given with the stabilizer of $z$ having
order $stab$ then 
 $$D^+(\Gamma(z))=\frac{2\pi}{stab\times\mbox{covolume }(\Gamma)}$$
\end{corollary}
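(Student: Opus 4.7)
The plan is to mimic the proof of Corollary \ref{density} but use the sharper Theorem \ref{fixed} in place of Theorem \ref{ultimate} so that the stabilizer appears in the denominator. Concretely, I would apply \ref{fixed} with a single orbit ($n=1$), taking $v_1 = 1$ and $|stab_1| = stab$, which shows that a non-zero function in $A^2_{s-2}$ vanishing on $\Gamma(z)$ exists if and only if
\[
s \;>\; 1 + \frac{4\pi}{stab \cdot \mathrm{covolume}(\Gamma)}.
\]
Equivalently, $\Omega(\Gamma(z)) = 1 + \frac{4\pi}{stab \cdot \mathrm{covolume}(\Gamma)}$.

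Next I would feed this value into the criterion of \cite{hkz} quoted just before Corollary \ref{density}: the condition $D^+(S) \leq \frac{1+\alpha}{p}$ is necessary and $D^+(S) < \frac{1+\alpha}{p}$ is sufficient for $S$ to be an $A^p_\alpha$-zero set. Taking $p = 2$ and $\alpha = s - 2$, the infimum of admissible $\alpha$ for $S = \Gamma(z)$ is exactly $\frac{4\pi}{stab \cdot \mathrm{covolume}(\Gamma)} - 1$, so that
\[
D^+(\Gamma(z)) \;=\; \frac{1+\alpha}{2} \;=\; \frac{s-1}{2} \;=\; \frac{2\pi}{stab \cdot \mathrm{covolume}(\Gamma)}.
\]

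There is really no obstacle here beyond verifying that Theorem \ref{fixed} specialises cleanly to the one-orbit, order-one case, since the hard analytic content (both the upper and lower bounds on $\Omega(\Gamma(z))$) has already been packaged into that theorem, and the translation between $\Omega$ and $D^+$ is exactly the same book-keeping with $p=2$, $\alpha=s-2$ that was used in Corollary \ref{density}. The only mild point worth flagging is that the case $stab = 1$ recovers Corollary \ref{density}, providing an internal consistency check.
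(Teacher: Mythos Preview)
Your proposal is correct and follows exactly the approach the paper takes: the paper's proof simply reads ``The proof is as in \ref{density},'' meaning one specialises Theorem \ref{fixed} to a single orbit with $v_1=1$ and then repeats the $p=2$, $\alpha=s-2$ bookkeeping against the \cite{hkz} criterion. You have just written out those details explicitly.
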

\begin{proof} The proof is as in \ref{density}.
\end{proof}
Note that the result extends to more than one orbit, and if there were a density calculation for sets with
zeros of prescribed order, that density could be calculated for Fuchsian groups.

The following result is surely known to experts.
\begin{corollary} Let $f$ be a holomorphic $k$-differential on a Riemann surface $\mathbb D/\Gamma$ of genus $g$, lifted to give
a holomorphic function on $\mathbb D$.  Then $f$ is square integrable for the measure $\displaystyle (1-r^2)^{s-2}rdrd\theta$ for
every $s>2k+1$ but not for $s=2k+1$
\end{corollary}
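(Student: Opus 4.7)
The plan is to leverage the compactness of the quotient $\mathbb D/\Gamma$ (since genus $g\ge 2$ makes $\Gamma$ a cocompact Fuchsian surface group) together with the $\Gamma$-invariance of the natural ``norm-squared'' of a $k$-differential. The transformation law $f(\gamma w)=f(w)(\gamma'(w))^{-k}$ combined with the identity $1-|\gamma w|^2=(1-|w|^2)|\gamma'(w)|$ shows that $h(w):=|f(w)|^2(1-|w|^2)^{2k}$ is $\Gamma$-invariant, hence descends to a continuous nonnegative function on the compact quotient and is globally bounded: $0\le h\le C$.

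For the convergence half ($s>2k+1$), factor $|f|^2(1-|w|^2)^{s-2}=h\cdot(1-|w|^2)^{s-2-2k}$ to get
$$\int_{\mathbb D}|f|^2(1-|w|^2)^{s-2}\,r\,dr\,d\theta\;\le\;C\int_{\mathbb D}(1-|w|^2)^{s-2-2k}\,r\,dr\,d\theta,$$
which is finite exactly when $s-2-2k>-1$, i.e.\ $s>2k+1$.

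For the borderline $s=2k+1$, assume $f\not\equiv 0$, so by continuity $h\ge c>0$ on some open set. Unfold the integral over the tiling $\mathbb D=\bigsqcup_\gamma\gamma F$, where $F$ is a fundamental domain with compact closure. Under $w=\gamma v$ (with Jacobian $|\gamma'(v)|^2$), the integrand $|f|^2(1-|w|^2)^{2k-1}$ transforms into $h(v)(1-|\gamma v|^2)(1-|v|^2)^{-2}$ on $F$; summing and applying Tonelli yields, up to a positive constant,
$$\int_{\mathbb D}|f|^2(1-|w|^2)^{2k-1}\,dx\,dy\;=\;\int_F h(v)\,P(v)\,d\mu_{\mathrm{hyp}}(v),\qquad P(v):=\sum_{\gamma\in\Gamma}(1-|\gamma v|^2).$$
The crux is that $P\equiv+\infty$ pointwise on $\mathbb D$. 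Applying the same unfolding to the constant function gives, up to a constant, $\int_{\mathbb D}(1-|w|^2)^{-1}dx\,dy=\int_F P\,d\mu_{\mathrm{hyp}}$; the left side is plainly $+\infty$ (elementary computation in polar coordinates). Thus $P$ is not integrable on the finite-measure set $F$. The uniform bound $|d_{\mathrm{hyp}}(0,\gamma v)-d_{\mathrm{hyp}}(0,\gamma v')|\le d_{\mathrm{hyp}}(v,v')$ (the triangle inequality applied to the $\Gamma$-isometry) makes finiteness of $P$ an all-or-nothing property across $\mathbb D$, forcing $P\equiv+\infty$. Since $h\ge c>0$ on a set of positive $\mu_{\mathrm{hyp}}$-measure in $F$, the right-hand side is infinite, which gives the divergence at $s=2k+1$.

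The main obstacle is precisely this pointwise divergence of $P$ at the critical exponent $1$; the self-contained unfolding trick, applied first to $(1-|w|^2)^{-1}$ and bootstrapped via Harnack comparability, circumvents any appeal to external Patterson--Sullivan divergence-type theory for Fuchsian groups.
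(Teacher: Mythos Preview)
Your proof is correct, and the convergence half ($s>2k+1$) is essentially the paper's argument in disguise: the boundedness of $h(w)=|f(w)|^2(1-|w|^2)^{2k}$ is exactly the cusp-form bound $|f|\le C\,\mathrm{Im}(z)^{-k}$ (after Cayley transform) that the paper uses to make $M_f$ a bounded operator and then applies it to the constant function $1$.

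For the divergence at $s=2k+1$, however, your route is genuinely different and considerably more elementary. The paper argues as follows: a nonzero holomorphic $k$-differential has $2k(g-1)$ zeros on $\Sigma$ by Riemann--Roch, so its lift vanishes (with the corresponding multiplicities) on a union of $\Gamma$-orbits whose total weighted count is $\sum_i v_i=2k(g-1)$; then Theorem~\ref{fixed} (the full machinery of wandering subspaces via left orderings and von Neumann dimension) forces any nonzero $A^2_{s-2}$-function with these prescribed zeros to satisfy $s>1+\frac{4\pi}{\mathrm{covol}(\Gamma)}\cdot 2k(g-1)=2k+1$. Your argument bypasses all of this: you unfold the integral over a compact fundamental domain and reduce to the pointwise divergence of the Poincar\'e-type series $P(v)=\sum_{\gamma}(1-|\gamma v|^2)$, which you establish directly by a Harnack comparison (the ratio $(1-|\gamma v|^2)/(1-|\gamma v'|^2)$ is bounded by $e^{d_{\mathrm{hyp}}(v,v')}$, so finiteness of $P$ at one point would make it bounded on $\overline F$, contradicting $\int_F P\,d\mu_{\mathrm{hyp}}=\int_{\mathbb D}(1-|w|^2)^{-1}\,dx\,dy=\infty$). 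In effect you are giving a self-contained proof of the earlier corollary $\sum_\gamma(1-|\gamma(z)|)=\infty$ (which the paper derives from Theorem~\ref{ultimate2}) and then using \emph{that} to get the non-integrability. Your approach is shorter and needs no operator-algebraic input; the paper's approach, on the other hand, illustrates that the corollary is a genuine consequence of the von Neumann dimension count, which is the whole point of the surrounding section.
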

\begin{proof} With our definition of cusp form, $f$ is a cusp form of weight $2k$ so the multiplication operator $M_f$ is by 
\ref{bounded} a bounded operator from $A^2_{-1+\epsilon}$ to $A^2_{-1+2k+\epsilon}$ for every $\epsilon>0$. The 
constant function $1$ is in $A^2_{-1+\epsilon}$ so $f$ itself is in $A^2_{-1+2k+\epsilon}$.
On the other the degree of the $k$th power of the canonical bundle is $2k(g-1)$ so by Riemann Roch $f$ has $2k(g-1)$
zeros counted with multiplicity. So by \ref{fixed} since $\Gamma$ acts freely we must have
 $s$ strictly greater than $\displaystyle 1+2k(g-1)\frac{4\pi}{covolume}=1+2k$.
\end{proof}

  \section{Trace vectors for the commutant of $\Gamma$.}
  
  We need an elementary result on Poincar\'e series, going back to Poincar\'e-\cite{poincare}. We prove it here because it is
  usually stated for $s$ a positive integer whereas we need it for real $s>1$ \cite{bers}. If $s$ is a positive even integer the
  next step after convergence is usually to show that the Poincar\'e series defines a cusp form. But for
  real $s$ this will not be the case because of the non-homomorphic nature of the branch of the logarithm.
  
  In the next lemma $F$ will be a fundamental domain for $\Gamma$, a Fuchsian group as usual with the convention established above for the meaning of $cz+d$,
  $s$ will be a real number bigger than one and a fixed branch of $log$ is used to define $(cz+d)^s$.
  
  \begin{lemma} \label{poincare}Let $\xi\in A^2_{s-2}$. Then the Poincar\'e series 
  $\displaystyle \sum_{\gamma \in \Gamma} \frac{\xi(\gamma(z))^2}{(cz+d)^{2s}}$
  converges locally uniformly in $\mathbb H$ as does $\displaystyle\sum_{\gamma \in \Gamma} \frac{|\xi(\gamma(z))|^2}{|cz+d|^{2s}}$, the former to a holomorphic function and the latter to (at least) a continuous one.
  \end{lemma}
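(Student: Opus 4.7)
The plan is to reduce both series (the positive-term one dominates the other in absolute value) to a uniform sub-mean-value estimate for $|\xi|^2$ against the weight $Im(\zeta)^{s-2}$. Using $Im(\gamma(z))=Im(z)/|cz+d|^2$ from Proposition \ref{imaginarybehaviour} and setting $G(w)=|\xi(w)|^2 Im(w)^s$, one has
$$\frac{|\xi(\gamma(z))|^2}{|cz+d|^{2s}}=\frac{G(\gamma(z))}{Im(z)^s},$$
so local uniform convergence of the positive series is equivalent to that of $\sum_\gamma G(\gamma(z))$. Once the positive series converges locally uniformly, the holomorphic series is dominated in modulus by it, so it too converges locally uniformly and its limit is holomorphic; the limit of the positive series is continuous as a locally uniform limit of continuous functions.

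The key estimate is: there exist constants $r>0$ and $C=C(s,r)$, independent of $w\in\mathbb H$, such that
$$G(w)\leq C\int_{B_h(w,r)} |\xi(\zeta)|^2 Im(\zeta)^{s-2}\,dxdy,$$
where $B_h(w,r)$ is a hyperbolic ball. To prove this, apply the submean-value inequality for the subharmonic function $|\xi|^2$ on the Euclidean disc of radius $\tfrac{1}{2}Im(w)$ centred at $w$ (which lies in $\mathbb H$); this disc sits inside a hyperbolic ball of a universal radius, on which $Im(\zeta)$ is comparable to $Im(w)$ by universal constants, and whose Euclidean area is comparable to $Im(w)^2$. Multiplying by $Im(w)^s$ and absorbing the weight factor into the constant gives the asserted bound with a constant depending only on $s$ and $r$.

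Applying this estimate with $w=\gamma(z)$ and summing,
$$\sum_\gamma G(\gamma(z))\leq C\int_{\mathbb H} |\xi(\zeta)|^2 Im(\zeta)^{s-2}\,N(z,\zeta)\,dxdy,$$
where $N(z,\zeta)=\#\{\gamma\in\Gamma:\gamma(z)\in B_h(\zeta,r)\}$. By discreteness of $\Gamma$, if two elements $\gamma_1,\gamma_2$ both satisfy $\gamma_i(z)\in B_h(\zeta,r)$ then $\gamma_2^{-1}\gamma_1\in\{\alpha\in\Gamma:d_h(\alpha z,z)<2r\}$, which is finite for each $z$ and uniformly bounded for $z$ ranging over any compact $K\subset\mathbb H$. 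Hence the sum is at most a constant (depending on $K$) times $\|\xi\|_{A^2_{s-2}}^2$, which proves pointwise finiteness uniformly on $K$.

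To upgrade this to locally uniform convergence, fix a base point $z_0\in\mathbb H$ and consider the tail over $\{\gamma:d_h(\gamma(z),z_0)>R\}$. The same argument bounds it by
$$C(K,r)\int_{\{d_h(\zeta,z_0)>R-r-\operatorname{diam}(K)\}} |\xi(\zeta)|^2 Im(\zeta)^{s-2}\,dxdy,$$
which tends to $0$ as $R\to\infty$ since $\xi\in A^2_{s-2}$, uniformly in $z\in K$. This gives local uniform convergence of the positive series, from which the remaining assertions follow as above. The main (and essentially only) obstacle is the uniform submean estimate; once the correct hyperbolic scaling is identified, the sum-then-overlap bookkeeping and the tail argument are routine.
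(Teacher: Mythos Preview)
Your argument is correct, but it is organised differently from the paper's proof. The paper fixes a fundamental domain $F$ and a compact ball $K\subset F$, sets $f_\gamma(z)=\xi(\gamma(z))^2/(cz+d)^{2s}$, and observes that the change of variables $w=\gamma(z)$ gives $\|\xi\|_{A^2_{s-2}}^2=\sum_\gamma\int_F|f_\gamma(z)|\,y^{s-2}\,dxdy$. Since each $f_\gamma$ is holomorphic, the mean value property on a small disc about $z\in K$ (contained in $F$) gives $|f_\gamma(z)|\le C\|f_\gamma\|_{L^1(F,y^{s-2}dxdy)}$ with $C$ independent of $\gamma$, and the Weierstrass $M$-test then gives uniform convergence on $K$ immediately; one covers $\mathbb H$ by varying the fundamental domain.

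So both proofs hinge on a (sub)mean-value inequality, but you apply it at each orbit point $\gamma(z)$ and then control the overlap of hyperbolic balls via discreteness, together with a separate tail estimate, whereas the paper applies it once on a fixed ball in the fundamental domain and gets the full $M$-test majorant $\sum_\gamma\|f_\gamma\|_1=\|\xi\|^2$ in one stroke. The paper's route is shorter and requires no overlap bookkeeping or tail argument; your route has the (minor) advantage of never invoking a fundamental domain and so reads verbatim for any discrete subgroup of \pslr, not only finite-covolume ones. One small clarification you might add: for uniform convergence you need the tail set to be a finite complement independent of $z\in K$; your inclusion of the $\operatorname{diam}(K)$ term shows you have this in mind, but it is cleanest to define the finite set as $\{\gamma:d_h(\gamma(z_0),z_0)\le R\}$ and then bound for $\gamma$ outside it.
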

  \begin{proof} Fix a ball $K$ in  $F$. Putting $\displaystyle f_\gamma(z)=\frac{\xi(\gamma(z))^2}{(cz+d)^{2s}}$, the square of the
  $L^2$ norm of $\xi$ is 
  $$\int_{\mathbb H} |\xi(z)|^2y^s \frac{dxdy}{y^2}=\sum_{\gamma\in \Gamma} \displaystyle \int_{F}
|f_\gamma(z)|y^s \frac{dxdy}{y^2}$$
Since $f_\gamma$ is holomorphic, by the mean value property there is a $C$ such that $\displaystyle |f_\gamma(z)|\leq  C||f_\gamma(z)||_1$ for
all $\gamma\in \Gamma$ and $z\in K$ where by $||-||_1$ we mean the $1$-norm on the fundamental domain $F$ for the measure $\displaystyle{\frac{dxdy}{y^{2-s}}}$. Since $\sum_\gamma  ||f||_1$ converges, convergence on $F$ of the two functions in the 
statement of the theorem  is guaranteed 
by the Weierstrass M-test. Locally uniform convergence everywhere follows by varying the fundamental domain.
  \end{proof}

  Radulescu in \cite{radulescu1},\cite{radulescu2} has given a description of the commutant $M=\vgm'$ on $A^2_{s-2}$. Given an $L^\infty$ function
 $f$ on $\mathbb H$ that is fixed by the action of $\Gamma$ (which is the same thing as an $L^\infty$ function
 on a fundamental domain), one can define the ``Toeplitz'' operator $T_f$ which is 
 the composition  $$T_f=PM_f:A^2_{s-2}\rightarrow A^2_{s-2}$$ where $P$ is the orthogonal projection from
 $L^2(\mathbb H)$ onto  $A^2_{s-2}$.
In \cite{radulescu2} we find:
\begin{theorem} The subspace of $M$ spanned by the $T_f$ is dense in the 2-norm $||x||_2=\sqrt{tr(x^*x)}$.
\end{theorem}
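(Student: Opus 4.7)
The plan is to prove the density statement by orthogonality in the 2-norm: I will show that if $y\in M$ satisfies $tr(y^{*}T_{f})=0$ for every $\Gamma$-invariant $f\in L^{\infty}(\mathbb{H})$, then $y=0$. Since $M$ is a \tf, this forces the closed linear span of the $T_{f}$ (in the 2-norm) to be all of $M$.

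First I would identify the trace on $M=vN_{\omega}(\Gamma)'$ in manageable form. Using Proposition \ref{imprimitivity} applied to $L^{2}(\mathbb{H},d\mu_{s})$ with the projection $q$ onto $L^{2}(F)$, one has the identification $L^{2}(\mathbb{H},d\mu_{s})\cong \ell^{2}(\Gamma)\otimes L^{2}(F)$. The commutant of $vN_{\omega}(\Gamma)$ on the ambient space is then $vN_{\omega}(\Gamma)'\otimes B(L^{2}(F))$, and Corollary \ref{gammaII1} shows that the canonical trace on $M$ is computed by the spatial trace $Tr_{B(\mathcal{H})}(q\,\cdot\,q)$, after compressing by the Bergman projection $P$. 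Concretely, if $\{\eta_{i}\}$ is an orthonormal basis of $L^{2}(F)$, then $tr(x)=\sum_{i}\langle xP\eta_{i},P\eta_{i}\rangle$ up to a fixed normalisation. This step is essentially bookkeeping using results already proved in the paper.

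Next I would compute $tr(y^{*}T_{f})$ explicitly using the reproducing kernel. Writing $k_{z}=\varepsilon_{z}/\|\varepsilon_{z}\|$ for the normalised kernel and using the resolution of identity
\[
I \;=\; c_{s}\int_{\mathbb{H}}|k_{z}\rangle\langle k_{z}|\,\|\varepsilon_{z}\|^{2}\,d\mu_{s}(z),
\]
which is the content of the reproducing-kernel property (Remark \ref{reproducing}), one unfolds $tr(y^{*}T_{f})=tr(y^{*}PM_{f})$ into an integral over $\mathbb{H}$. After using $\Gamma$-invariance of $y$ and of $f$ (via the imprimitivity setup) to fold this integral onto the fundamental domain $F$, one obtains the Berezin-symbol pairing
\[
tr(y^{*}T_{f}) \;=\; \int_{F}\overline{\tilde{y}(z)}\,f(z)\,\|\varepsilon_{z}\|^{2}\,d\mu_{s}(z), \qquad \tilde{y}(z):=\langle y k_{z},k_{z}\rangle.
\]
Vanishing of this pairing for every bounded $f$ on $F$ forces $\tilde{y}\equiv 0$ on $F$, hence by $\Gamma$-invariance on all of $\mathbb{H}$.

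Finally I would invoke the faithfulness of the Berezin symbol on bounded operators on $A^{2}_{s-2}$: from $\langle y k_{z},k_{z}\rangle=0$ for all $z$ one deduces, by polarisation and by applying $\pi_{s}(\gamma)$ to shift base points, that $\langle y\varepsilon_{z},\varepsilon_{w}\rangle=0$ for all $z,w\in\mathbb{H}$; since the reproducing kernels are total in $A^{2}_{s-2}$, this gives $y=0$. The main obstacle will be Step 2: getting the normalisations right when passing between the $\ell^{2}(\Gamma)\otimes L^{2}(F)$ description of the trace and the reproducing-kernel description in Bergman space, and in particular handling the projective cocycle $\omega$ and the weights $\|\varepsilon_{z}\|^{2}$ consistently so that the final pairing is genuinely the Berezin pairing against $L^{\infty}(F)$.
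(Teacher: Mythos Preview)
First, note that the paper does not itself prove this theorem: it is quoted from Radulescu \cite{radulescu2} without argument, so there is no in-paper proof to compare against. Your overall strategy---orthogonality in the $2$-norm, reducing $tr(y^{*}T_{f})=0$ to vanishing of the Berezin symbol $\tilde y(z)=\langle yk_{z},k_{z}\rangle$ via the trace/coherent-state machinery of \S3, and then injectivity of the Berezin map---is the standard route and is essentially how the result is established in \cite{radulescu1,radulescu2}. Steps 1--3 are the right shape, and you correctly flag that the bookkeeping in folding the trace pairing onto $F$ is where the care lies.

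There is, however, a genuine gap in your final step. From $\langle yk_{z},k_{z}\rangle=0$ for all $z$ you \emph{cannot} deduce $\langle y\varepsilon_{z},\varepsilon_{w}\rangle=0$ for all $z,w$ ``by polarisation and by applying $\pi_{s}(\gamma)$.'' Polarisation of the sesquilinear form $(\xi,\eta)\mapsto\langle y\xi,\eta\rangle$ requires vanishing of $\langle y\xi,\xi\rangle$ for all $\xi$ in a \emph{linear span}, whereas you only know it on the set $\{k_{z}\}$ itself, not on its linear combinations. And since $y$ commutes with $\pi_{s}(\Gamma)$, applying $\pi_{s}(\gamma)$ to $k_{z}$ merely returns (a scalar multiple of) $k_{\gamma^{-1}z}$, giving no new information. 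The correct argument is analytic, not algebraic: the kernel $F(z,w)=\langle y\varepsilon_{w},\varepsilon_{z}\rangle=(y\varepsilon_{w})(z)$ is holomorphic in $z$ and anti-holomorphic in $w$ (the latter because $\overline{F(z,w)}=(y^{*}\varepsilon_{z})(w)$); in the disc model one therefore has an expansion $F(z,w)=\sum_{m,n\ge 0}a_{mn}z^{m}\bar w^{\,n}$, and $F(z,z)\equiv 0$ forces all $a_{mn}=0$ by linear independence of the monomials $z^{m}\bar z^{\,n}$. Once you replace the polarisation claim with this holomorphy argument, the proof goes through.
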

and 
\begin{theorem} \label{radtrace}The trace in $M$ of $T_f$ is a multiple of  $\displaystyle\int_F f(z)\frac{dxdy}{y^2}$.
\end{theorem}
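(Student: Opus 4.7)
The plan is to realize the II$_1$ factor $M$ as a corner of the II$_\infty$ commutant of $\vgm$ acting on the ambient space $L^2(\mathbb H,\mu_s)$, and then to compute $tr_M(T_f)$ by a reproducing kernel calculation which unfolds using the $\Gamma$-invariance of $f$. Let $p$ denote the Bergman projection from $L^2(\mathbb H,\mu_s)$ onto $A^2_{s-2}$ and let $q$ be the projection onto $L^2(F,\mu_s)$, where $F$ is a fundamental domain for $\Gamma$. Multiplication by a $\Gamma$-invariant $L^\infty$ function commutes with $\pi_s(\Gamma)$, and $p$ preserves $A^2_{s-2}$, so $T_f=pM_fp\in M$. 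Setting $N=\vgm'$ to be the commutant of $\vgm$ on the full space $L^2(\mathbb H,\mu_s)$, which is a II$_\infty$ factor by \ref{imprimitivity}, one has $M=pNp$, and the canonical semifinite trace on $N$ satisfies $Tr_N(X)=Tr_{B(L^2(\mathbb H))}(qXq)$ for $X\in N_+$, as one sees by extending \ref{gammaII1} to positive elements using the decomposition $N\cong \vgm'\otimes B(qL^2(\mathbb H))$. Consequently $tr_M(T_f)=Tr_N(T_f)/Tr_N(p)$, with $Tr_N(p)=\frac{s-1}{4\pi}\mbox{covolume}(\Gamma)$ from \ref{vonNdimension}.

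Next I would express the numerator through the Bergman kernel $P(z,w)=\varepsilon_w(z)$. Since $pM_fp$ is an integral operator with kernel $K(z,w)=\int_{\mathbb H} P(z,u)\,f(u)\,P(u,w)\,d\mu_s(u)$, the formula for $Tr_N$ gives
$$Tr_{B(L^2(\mathbb H))}(qpM_fpq)\;=\;\int_F K(z,z)\,d\mu_s(z)\;=\;\int_F\!\int_{\mathbb H}|P(z,u)|^2 f(u)\,d\mu_s(u)\,d\mu_s(z).$$
The critical step is the unfolding. After swapping the order of integration, write $\mathbb H=\sqcup_\gamma \gamma F$ and change variables $u=\gamma v$ in the inner integral, using the real Jacobian $d\mu_s(\gamma v)=|c_\gamma v+d_\gamma|^{-2s}d\mu_s(v)$ implicit in \ref{imaginarybehaviour} together with $f(\gamma v)=f(v)$. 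This recasts the double integral as $\int_F f(v)H(v)\,d\mu_s(v)$ where
$$H(v)=\sum_{\gamma\in\Gamma}\int_F |P(z,\gamma v)|^2|c_\gamma v+d_\gamma|^{-2s}\,d\mu_s(z).$$

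The Bergman kernel transforms as $\pi_s(\gamma)\varepsilon_v=C(\gamma,v)\,\varepsilon_{\gamma v}$ with $|C(\gamma,v)|=|c_\gamma v+d_\gamma|^{-s}$ (by norm preservation and the formula for $\check\pi_s$), so $|P(z,\gamma v)|^2|c_\gamma v+d_\gamma|^{-2s}=|c_{\gamma^{-1}}z+d_{\gamma^{-1}}|^{-2s}|P(\gamma^{-1}z,v)|^2$. Substituting $w=\gamma^{-1}z$ inside the integral over $F$ and using the standard identity $c_{\gamma^{-1}}(\gamma w)+d_{\gamma^{-1}}=(c_\gamma w+d_\gamma)^{-1}$ converts the $\gamma$-th term of $H(v)$ into $\int_{\gamma^{-1}F}|P(w,v)|^2\,d\mu_s(w)$; summing over $\gamma$ gives $H(v)=\int_{\mathbb H}|P(w,v)|^2 d\mu_s(w)=\|\varepsilon_v\|^2=P(v,v)$. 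From the proof of \ref{vonNdimension} (equivalently \ref{onb}) one has $P(v,v)=\frac{s-1}{4\pi}\mathrm{Im}(v)^{-s}$, so
$$Tr_N(T_f)=\frac{s-1}{4\pi}\int_F f(v)\,\frac{dxdy}{y^2},$$
and dividing by $Tr_N(p)$ yields $tr_M(T_f)=\frac{1}{\mbox{covolume}(\Gamma)}\int_F f\,\frac{dxdy}{y^2}$, a scalar multiple of the claimed integral.

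The main obstacle is the unfolding step: because $s$ need not be an integer the factor $(c_\gamma z+d_\gamma)^s$ depends on the chosen branch of the logarithm and accounts for the projective cocycle $\omega$, so one must check that the various phases do not spoil the rearrangement. The saving feature is that in every quantity appearing in the trace the factor enters only as $|c_\gamma\cdot+d_\gamma|^{2s}$, so the projective phases cancel and one uses only the honest $PSL_2(\mathbb R)$-invariance of the measures $d\mu_0$ and, with the cocycle weight, $d\mu_s$.
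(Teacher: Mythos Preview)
The paper does not supply a proof of this theorem; it is quoted from R\u{a}dulescu \cite{radulescu2}, so there is no argument in the paper to compare against. Your proof is correct and self-contained within the framework of the paper: you identify $M$ with the corner $pNp$ of the II$_\infty$ commutant $N$ on the full $L^2(\mathbb H,\mu_s)$, use Proposition~\ref{imprimitivity} and Corollary~\ref{gammaII1} to express the semifinite trace as $Tr_N(X)=Tr_{B(L^2)}(qXq)$, and then compute $Tr_N(pM_fp)$ via the Bergman kernel. The unfolding step is the heart of the matter, and your verification of the covariance $|P(z,\gamma v)|^2|c_\gamma v+d_\gamma|^{-2s}=|c_{\gamma^{-1}}z+d_{\gamma^{-1}}|^{-2s}|P(\gamma^{-1}z,v)|^2$ together with the change of variables $w=\gamma^{-1}z$ correctly collapses $H(v)$ to $\|\varepsilon_v\|^2=P(v,v)=\frac{s-1}{4\pi}\,\mathrm{Im}(v)^{-s}$. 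In effect this is the same computation as Theorem~\ref{vonNdimension}, with the constant function $1$ replaced by the $\Gamma$-invariant symbol $f$, and it yields the precise constant $tr_M(T_f)=\mathrm{covolume}(\Gamma)^{-1}\int_F f\,y^{-2}\,dxdy$.

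Two minor points worth tightening. First, the interchange of integrals (Fubini) is justified because $|f|\le\|f\|_\infty$ and the remaining double integral $\int_F\!\int_{\mathbb H}|P(z,u)|^2\,d\mu_s(u)\,d\mu_s(z)=\int_F P(z,z)\,d\mu_s(z)=Tr_N(p)$ is finite by Theorem~\ref{vonNdimension}; you should say this explicitly. Second, the extension of $Tr_N(X)=Tr_{B(L^2)}(qXq)$ from projections to general trace-class $X\in N$ follows from the unitary $U$ of Proposition~\ref{imprimitivity} and the fact that $\varepsilon_{id}$ is a trace vector for $vN_\omega(\Gamma)'$ on $\ell^2(\Gamma)$; this is straightforward but deserves a sentence. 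Your remark that only the moduli $|c_\gamma\cdot+d_\gamma|^{2s}$ enter, so the branch choices and the cocycle $\omega$ play no role, is exactly right.
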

Note that by $\Gamma$-invariance  the integral does not depend on the fundamental domain.
\begin{definition} \label{tracelike}An element $\xi\in A^2_{s-2}$ will be called \emph{tracelike} if, for all $z\in\mathbb H$,
 $$  \sum_{\gamma\in\Gamma}\frac{|\xi(z)|^2}{|cz+d|^{2s}}=(constant)Im(z)^{-s} $$

\end{definition}
\begin{corollary}\label{commutrace} A function $\xi\in A^2_{s-2}$ is a trace vector for $M$ iff 
it is tracelike.
\end{corollary}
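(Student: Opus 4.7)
The plan is to reduce both sides of the equivalence to a single integral identity, then read the tracelike equation off by varying $f$. The key is Radulescu's density result: the span of the Toeplitz operators $T_f$ with $f\in L^\infty(\mathbb H)^\Gamma$ is dense in $M$ in the $2$-norm. Both $x\mapsto \langle x\xi,\xi\rangle$ and $x\mapsto c\cdot tr(x)$ are normal positive linear functionals on $M$, so $\xi$ is a trace vector iff they agree on that dense subspace, i.e.\ iff there is a constant $c$ with
\[
\langle T_f\xi,\xi\rangle \;=\; c\cdot tr(T_f)\qquad \text{for every } f\in L^\infty(\mathbb H)^\Gamma.
\]

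Next I would compute each side explicitly. Since $\xi\in A^2_{s-2}$ and $P$ is self-adjoint with $P\xi=\xi$, one has $\langle T_f\xi,\xi\rangle = \langle M_f\xi,\xi\rangle = \int_{\mathbb H} f(z)|\xi(z)|^2 y^s\,\frac{dx\,dy}{y^2}$. Now I would unfold this integral using that $f$ and $\frac{dx\,dy}{y^2}$ are $\Gamma$-invariant, while $\operatorname{Im}(\gamma(w))^s = \operatorname{Im}(w)^s/|cw+d|^{2s}$. Writing $\mathbb H = \bigsqcup_{\gamma\in\Gamma}\gamma(F)$ and substituting $z=\gamma(w)$, Tonelli (everything is positive) yields
\[
\langle T_f\xi,\xi\rangle \;=\; \int_F f(w)\,\operatorname{Im}(w)^s\Bigl(\sum_{\gamma\in\Gamma}\frac{|\xi(\gamma(w))|^2}{|cw+d|^{2s}}\Bigr)\frac{du\,dv}{v^2}.
\]
On the other side, Theorem \ref{radtrace} gives $tr(T_f) = c'\int_F f(w)\,\frac{du\,dv}{v^2}$.

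Equating these for all $f\in L^\infty(F)$, and using that $L^\infty$ separates integrable functions, is equivalent to the pointwise identity
\[
\operatorname{Im}(w)^s\sum_{\gamma\in\Gamma}\frac{|\xi(\gamma(w))|^2}{|cw+d|^{2s}} \;=\; \text{constant}
\]
almost everywhere on $F$. By Lemma \ref{poincare} the Poincar\'e series on the left converges locally uniformly to a continuous function, so the identity holds everywhere on $F$, and hence everywhere on $\mathbb H$ by $\Gamma$-invariance of both sides. That is exactly the condition of Definition \ref{tracelike}.

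The potentially delicate step is justifying the reduction to Toeplitz operators: I need $\xi$ to be a trace vector on all of $M$, not merely on a dense subspace. The way around this is that $\omega(x):=\langle x\xi,\xi\rangle$ and $\omega'(x):=c\cdot tr(x)$ are both normal states (up to normalization) on $M$, hence continuous in the $\sigma$-weak topology; 2-norm density of the Toeplitz operators plus Kaplansky density ensures that on bounded subsets of $M$ they are also $\sigma$-weakly dense, and that is enough for $\omega=\omega'$ on $M$. Once this is in place, the rest is a clean unfolding argument combined with the elementary fact that $L^\infty(F)$ is dense in $L^1(F,\frac{du\,dv}{v^2})^*$ in the appropriate duality.
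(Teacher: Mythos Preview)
Your argument is essentially the paper's own: compute $\langle T_f\xi,\xi\rangle$ by unfolding the integral over $\mathbb H$ into an integral over a fundamental domain, compare with Radulescu's formula for $tr(T_f)$, and invoke Lemma~\ref{poincare} to pass from an almost-everywhere identity to a pointwise one. The paper concludes ``by varying the fundamental domain'' where you invoke $\Gamma$-invariance; these amount to the same thing.

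Your final paragraph goes beyond what the paper writes in flagging the passage from agreement on the $T_f$ to agreement on all of $M$. One caution: Kaplansky density applies to $*$-subalgebras, and the linear span of the $T_f$ is $*$-closed but not obviously multiplicatively closed, so the argument as stated is not quite complete. The paper itself simply does not address this point.
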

\begin{proof} By \ref{radtrace} we have, up to constants, for a bounded $\Gamma$ invariant function on $\mathbb H$,
 $$\langle T_f\xi,\xi\rangle=\int_Ff(z) \frac{dxdy}{y^2}.$$
But $\displaystyle \langle T_f\xi,\xi\rangle=\int_{\mathbb H} f(z)|\xi(z)|^2y^s\frac{dxdy}{y^2}=\int_F f(z) \sum_{\gamma\in\Gamma}\frac{|\xi(\gamma(z))|^2}{|cz+d|^{2s}} y^s\frac{dxdy}{y^2}$. The series converges to a continuous
function by \ref{poincare}. When we subtract a constant times $y^{-s}$ we get a function orthogonal on $F$ to all bounded measurable functions. The corollary follows  by varying the fundamental domain. 
\end{proof}
\begin{remark} \rm{Cusp forms give us a supply of interesting Toeplitz operators.  We have seen in 
\ref{cuspbounded} that a cusp form $f$
gives a bounded $\vgm$-linear map $T_f$ between Bergman spaces. So if $f$ and $g$ are cusp forms of the
same weight $p$, $T_f^*T_g$ is in $M$. It is actually the Toeplitz operator for the $\Gamma$-invariant bounded
function $h(z)=\bar {f(z)} g(z) Im(z)^{p}$. Theorem \ref{radtrace} then shows that the trace in $M$ of
$T_f^*T_g$ is the integral of $h$ over the fundamental domain with hyperbolic measure, i.e. the well known Petersson inner product \cite{petersson}.
This result was also obtained in \cite{ghj}. Radulescu also claims in \cite{radulescu2} that the Toeplitz operators given by
cusp forms are dense in $M$ though it appears he has only proved it for \pslz.
}
\end{remark}
This allows us to state a theorem about existence of such functions.
\begin{theorem} There is a  tracelike $\xi\in A^2_{s-2}$  iff $\displaystyle s\leq1+\frac{4\pi}
{covolume(\Gamma)}$. Moreover if $\displaystyle s=1+\frac{4\pi}
{covolume(\Gamma)}$, the condition is \emph{equivalent} to $\xi$ being a wandering vector for $\Gamma$.
\end{theorem}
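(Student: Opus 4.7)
The plan is to package both conditions as statements about trace vectors for two mutually commutant von Neumann algebras, and then apply the von Neumann dimension machinery of Section 3 together with Radulescu's description.

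For the first equivalence, recall that by Corollary \ref{commutrace} a vector $\xi\in A^2_{s-2}$ is tracelike iff it is a trace vector for the commutant $M'=vN_\omega(\Gamma)'$. By part $(\romannumeral 9)$ of Theorem \ref{couplingproperties}, a (nonzero) trace vector for $M'$ exists iff $\dim_{M'}(A^2_{s-2})\ge 1$. Using $(\romannumeral 7)$ of the same theorem, this is equivalent to $\dim_{vN_\omega(\Gamma)}(A^2_{s-2})\le 1$, which by Theorem \ref{vonNdimension} reads $\tfrac{s-1}{4\pi}\,\mathrm{covolume}(\Gamma)\le 1$, i.e. $s\le 1+\tfrac{4\pi}{\mathrm{covolume}(\Gamma)}$. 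This settles the first statement.

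For the ``moreover'' clause I would first show the parallel characterization of wandering vectors: $\xi\in A^2_{s-2}$ is wandering iff it is a trace vector for $N:=vN_\omega(\Gamma)$ itself. One direction is Proposition \ref{standardsubspace}. Conversely, if $\xi$ is a trace vector for $N$, then for $\gamma\neq\mathrm{id}$,
\[
\langle \pi_s(\gamma)\xi,\xi\rangle \;=\; c\cdot tr\bigl(\lambda_\gamma^\omega\bigr)\;=\;0,
\]
by the explicit formula for $tr$ on $vN_\omega(\Gamma)$ given in Section 2, so $\xi$ is wandering.

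It now remains to show that at the critical value $s_0=1+\tfrac{4\pi}{\mathrm{covolume}(\Gamma)}$, a vector in $A^2_{s_0-2}$ is a trace vector for $N$ iff it is a trace vector for $N'=M$. At this value $\dim_N(A^2_{s_0-2})=1$, so by part $(\romannumeral 2)$ of Theorem \ref{couplingproperties} there is an $N$-linear unitary $A^2_{s_0-2}\cong L^2(N)$; it suffices to check the claim in the standard form. Representing vectors as $x\in L^2(N)$ with the inner product $\langle y,z\rangle=tr(z^*y)$, a straightforward computation yields
\[
\langle ax,x\rangle=tr(axx^*)\quad(a\in N),\qquad \langle xb,x\rangle=tr(bx^*x)\quad(b\in N),
\]
so $x$ is a trace vector for the left action of $N$ iff $xx^*=1$, and a trace vector for the right action (i.e.\ for $N'$) iff $x^*x=1$. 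Since $N$ is a II$_1$ factor, every isometry in $N$ is a unitary, so both conditions coincide (they both say $x$ is unitary). Hence trace vectors for $N$ and trace vectors for $N'$ coincide on $A^2_{s_0-2}$, and combining with Corollary \ref{commutrace} and the preceding paragraph gives: wandering $\iff$ trace vector for $N$ $\iff$ trace vector for $N'$ $\iff$ tracelike.

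The only genuinely delicate step is the last one — the identification of the two kinds of trace vectors in $L^2(N)$ — but once one works in the standard form it reduces to the elementary fact that in a II$_1$ factor isometries are automatically unitaries. Everything else is bookkeeping that packages the content of Theorems \ref{vonNdimension} and \ref{couplingproperties} together with Corollary \ref{commutrace}.
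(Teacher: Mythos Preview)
Your argument is correct and is exactly the approach the paper takes (only sketched there in two lines): for the first assertion you combine Corollary~\ref{commutrace} with items $(\romannumeral 7)$ and $(\romannumeral 9)$ of Theorem~\ref{couplingproperties} and Theorem~\ref{vonNdimension}, and for the second you transport to $L^2(N)$ via $\dim_N=1$ and compare trace vectors for the left and right actions. The one spot worth tightening is the last step: the conditions $xx^*=c$ and $x^*x=c$ are stated for $x\in L^2(N)$, not a priori in $N$, but either condition forces $|x|$ (resp.\ $|x^*|$) to be the scalar $\sqrt{c}$, hence $x\in N$ by polar decomposition of affiliated operators, after which your ``isometries in a II$_1$ factor are unitaries'' argument applies verbatim.
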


\begin{proof} These are immediate consequences of von Neumann dimension. Item (ix) of \ref{couplingproperties}
proves the first assertion and the equivalence of being a trace vector for $M$ and $M'$ is easy when 
the von Neumann dimension is one since then the Hilbert space is $M$-isomorphic to the $L^2$ closure of $M$.
\end{proof}
  

 \appendix\label{linebundleuse}
\section {Existence of nontrivial central extensions of Fuchsian groups arising from the nonintegral values of $s$.}\label{obstruction}

We observed in section \ref{fuchsian}  that, for $s>1$, the formula $$(\check\pi_s(g^{-1})f)(z)=\frac{1}{(cz+d)^{s}}f(g(z))$$
only defines a \emph{projective} unitary representation of \pslr on $A^2_{s-2}$, but that on restriction to a Fuchsian
group $\Gamma$ the representation might be adjusted to become honest. That is the case for instance if $\Gamma$
is a free product of cyclic groups (\cite{HKS})- simply adjust the unitaries representing the generators so that they
have the right order in the unitary group of $A^2_{s-2}$. It would have simplified the presentation in this paper if we could
do the same for all Fuchsian groups, but in this appendix we will show that this is not the case for fundamental groups
of surfaces of genus bigger than one.

\begin{theorem} \label{nolift}Let $\Gamma$ be the Fuchsian group of the fundamental group of a Riemann surface
$\Sigma$ of genus $g$.  The projective representation given by the restriction of $\check \pi_s$ (for $s>1$) is equivalent to an honest representation iff $\displaystyle s\in \frac{1}{g-1}\mathbb Z$.

\end{theorem}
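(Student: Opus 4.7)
The plan is to identify the obstruction class $[\omega]\in H^2(\Gamma,\mathbb T)$ of the projective representation $\check\pi_s|_\Gamma$ with an explicit element of $\mathbb R/\mathbb Z$, and to show that it vanishes precisely for $s\in\tfrac{1}{g-1}\mathbb Z$. To begin, any continuous choice of branch of $(cz+d)^s$ along paths in $G=PSL_2(\mathbb R)$ lifts uniquely to a smooth branch on the universal cover $\tilde G$, and the formula of Remark \ref{projective} then defines an honest unitary representation $\tilde\pi_s\colon \tilde G\to U(A^2_{s-2})$. Let $\tilde\Gamma\subset\tilde G$ be the preimage of $\Gamma$, which is a central extension $1\to \mathbb Z\to \tilde\Gamma\to\Gamma\to 1$ with kernel $\ker(\tilde G\to G)\cong \mathbb Z$.

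A standard obstruction argument shows that the cocycle $\omega$ of $\check\pi_s|_\Gamma$ is a coboundary if and only if the character $\chi_s\colon\mathbb Z\to\mathbb T$, by which the center of $\tilde G$ acts on $A^2_{s-2}$, extends to a character of $\tilde\Gamma$; equivalently, $[\omega]$ is the image of the extension class $e(\tilde\Gamma)\in H^2(\Gamma,\mathbb Z)\cong\mathbb Z$ under the coefficient change $\mathbb Z\to\mathbb T$ induced by $\chi_s$. Two ingredients remain. First, $G=PSL_2(\mathbb R)$ is topologically the oriented unit tangent bundle $T^1\mathbb H$, so $T^1\Sigma_g=G/\Gamma$ is a circle bundle over $\Sigma_g$ with Euler number $\chi(\Sigma_g)=2-2g$; its long exact homotopy sequence identifies $\pi_1(T^1\Sigma_g)$ with $\tilde\Gamma$ and the extension class $e(\tilde\Gamma)$ with $2-2g$. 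Second, consider the path $\theta\mapsto k_\theta=\bigl(\begin{smallmatrix}\cos\theta & -\sin\theta\\ \sin\theta &\cos\theta\end{smallmatrix}\bigr)$, $\theta\in[0,\pi]$: since $k_\pi=-I=I$ in $PSL_2$, its image in $G$ is a loop generating $\pi_1(G)=\mathbb Z$. Because $k_\theta$ fixes $i$ and $cz+d\bigl|_{z=i}=e^{i\theta}$ along this path, continuous tracking of the branch gives $\chi_s(1)=e^{i\pi s}=e^{2\pi i(s/2)}$, so $\chi_s$ corresponds to multiplication by $s/2$ modulo $\mathbb Z$.

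Combining these, under the identification $H^2(\Gamma,\mathbb T)\cong \mathbb R/\mathbb Z$ coming from the Bockstein of $0\to\mathbb Z\to\mathbb R\to\mathbb T\to 0$ (and using $H^3(\Gamma,\mathbb Z)=0$ together with the surjectivity of $H^1(\Gamma,\mathbb R)\to H^1(\Gamma,\mathbb T)$), the class $[\omega]$ equals $(2-2g)(s/2)=(1-g)s\pmod{\mathbb Z}$, which is zero precisely when $(g-1)s\in\mathbb Z$. The main obstacle is verifying the identification of $e(\tilde\Gamma)$ with the Euler number $2-2g$ using nothing more than the topology of $T^1\Sigma_g$, and pinning down sign conventions so that $\chi_s(1)=e^{i\pi s}$ is compatible with the branch of $\log$ fixed in Remark \ref{choiceofbranch}. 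Both are routine but must be done carefully since they are the only places where the specific geometry of $\Sigma_g$ enters the argument; once they are in hand, the conclusion $s\in\tfrac{1}{g-1}\mathbb Z$ follows immediately.
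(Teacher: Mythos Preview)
Your argument is correct and takes a genuinely different route from the paper. The paper works entirely in holomorphic line bundle language: it observes that the obstruction $obstr(s)=\prod_i[\check\pi_s(\gamma_i),\check\pi_s(\gamma_{i+1})]$ is a continuous homomorphism $\mathbb R\to\mathbb T$, and then, for each $s$ with $obstr(s)=1$, builds a holomorphic line bundle $L(s)$ on $\Sigma$ satisfying $L(s+t)\cong L(s)\otimes L(t)$ and $L(2)\cong K$. Non-integrality of the degree of $L(s)$ rules out $s\notin\frac{1}{g-1}\mathbb Z$, and to show $\frac{1}{g-1}\in\ker(obstr)$ the paper chooses a degree-one bundle $L$ with $L^{\otimes(2g-2)}\cong K$, lifts it to $\mathbb H$, and extracts a $(2g-2)$th root of the cocycle for $K$. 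Your approach instead pushes everything into the group cohomology of the central extension $1\to\mathbb Z\to\tilde\Gamma\to\Gamma\to1$: you identify $[\omega]$ as the image of the extension class under $\chi_s$, compute $e(\tilde\Gamma)=2-2g$ as the Euler number of $T^1\Sigma_g$, and read off $\chi_s$ by tracking the branch along the rotation loop in $K\subset G$. Both arguments ultimately hinge on the number $2g-2$, but yours reaches it topologically via the Euler class while the paper reaches it holomorphically via $\deg K$. Your method is cleaner and more uniform (one formula $[\omega]=(1-g)s\bmod\mathbb Z$ handles both directions at once), and it would adapt immediately to other cocompact lattices; the paper's method has the advantage of actually exhibiting the lifted cocycle when $s\in\frac{1}{g-1}\mathbb Z$, which is in the spirit of the line bundle constructions used elsewhere in the appendix. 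The caveats you flag about sign conventions and the identification of $e(\tilde\Gamma)$ with the Euler number are real but, as you say, routine.
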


\begin{proof} 
Let $\{\gamma_i\}$ be generators for
$\Gamma$ so that the defining relation for  $\Gamma$ is $\prod [\gamma_i,\gamma_{i+1}]=1$ (see \cite{fulton})
 then changing  liftings $\check \pi_s(\gamma)$ of the projective representation of $\Gamma$ to different unitaries does not affect $\prod [\check \pi(\gamma_i),\check \pi(\gamma_{i+1})]=1$ provided the liftings of inverses in $\Gamma$
 are inverse unitaries. By \ref{choiceofbranch} this is true for our careful definition of $\check \pi$.
 So the single number $$obstr(s)=\prod [\check \pi(\gamma_i),\check \pi(\gamma_{i+1})]$$ is exactly the
 obstruction to lifting the restriction of $\check \pi$ to an honest unitary representation.
 Observe also that $s\mapsto obstr(s)$, when written out as an explicit function of $s$ and $z$, is a continous homomorphism from 
 $\mathbb R$ to the circle $\mathbb T^1$.

 Thus the problem becomes: "what is the kernel of $obstr$?"
 (Any even integer $s$ is in the kernel since then $\displaystyle\frac{1}{(cz+d)^{s}}$ has the 
 cocycle property for all of \pslr.)This question has nothing to do with Bergman space.
 We will answer it using line bundles on $\Sigma$.
 We claim that  $ker(obstr)=\frac{1}{g-1}\mathbb Z$.
Our proof will use the following construction:
\begin{proposition}\label{linebundle}
Suppose $obstr(s)=1$ for some $s\in \mathbb R, s>1$. Then there is a holomorphic line bundle $L(s)$ over $\Sigma$ 
with the following two properties:
\begin{enumerate}
\item $L(s+t)=L(s)\otimes L(t) $
\item $L(2)$ is the canonical line bundle $K$.
\end{enumerate}

\end{proposition} 
\begin{proof}
Since $obstr(s)=1$, the projective representation of $\Gamma$ on $A^2_{s-2}$ may be lifted to an honest representation by defining,
 for  $w=\prod_1^n\alpha_i$where $\alpha_i$ is one of the generators for each $i=1,2,\cdots,n$
 $$\pi(w)=\prod_1^n\check \pi (\alpha_i)$$  and

It follows that $\pi({\gamma^{-1}})(\xi)(z)=j(\gamma,z)\xi(\gamma(z))$ where $j(\gamma)$ satisfies the cocycle
condition $j(\gamma_1\gamma_2,z)=j(\gamma_1,\gamma_2(z))j(\gamma_2,z)$, and $j(\gamma,z)$ is a 
product of holomorphic functions of $z$ of the form $exp(s\log(cz+d))$.

The cocycle condition is exactly what is required to define an action of $\Gamma$ on the line bundle (over 
 $\mathbb H$)
 $\mathbb H\times \mathbb C$:$$ \gamma(z,w)=(\gamma(z),j(\gamma,z) w)$$
 
 This action is properly discontinuous so, passing to the quotient, we obtain a line bundle $L(s)$ on $\Sigma$, which is
 holomorphic because $j$ is. 
 
 \begin{enumerate}
\item 
Change of local trivialisations of $L(s)$ are obtained by lifting to $\mathbb H\times \mathbb C$ and applying
elements of $\Gamma$, and tensor product of line bundles corresponds to multiplying the cocycle  defining the
action.Since $j$ is a product of terms $f(s)$ with $f(s+t)=f(s)f(t)$, the same is true for $j$ as a function of $s$.

\item When $s=2$, $\displaystyle\frac{1}{(cz+d)^{s}}$ is already a cocycle so it is equal to $j(\gamma, z)$. But the canonical
line bundle is that of holomorphic one-forms which are locally of the form $f(z)dz$ and transform under
the action of $\Gamma$ just as our action on $\mathbb H\times \mathbb C$ acts on functions. 
\end{enumerate}

\end{proof}

We return to the proof of \ref{nolift}. The rational number $2$ is in $ker(obstr)$ so it suffices to show:
\begin{enumerate}
\item $\displaystyle\frac{1}{g-1}\in ker(obstr)$

\item No rational number $r=1+\epsilon$, $0<\epsilon<\frac{1}{g-1}$ is in  $ker(obstr)$.
\end{enumerate} 
Let us begin with (2). Suppose $s=1+\frac{m}{n}\in ker(obstr)$ with $\frac{m}{n}<\frac{1}{g-1}$. Form the line bundle  $L(s)$ of \ref{linebundle} 
over $\Sigma$ and let its degree be $d$. Then $\otimes^{2n} L(s) \cong \otimes^{n+m}K$ by (2) of \ref{linebundle}.
Equating the degrees of both sides we get $2nd=2(g-1)(n+m)$ or $d=(g-1)(1+\frac{m}{n})$.
But $(g-1)\frac{m}{n}$ is not an integer, a contradiction.
\5
So we only have to show that $s=\frac{1}{g-1}\in ker(obstr)$. Note that another way of phrasing the lifting property
for $\check \pi_s$ is the following: does there exist a function $\omega:\Gamma\rightarrow \mathbb T$ so
that $\displaystyle \gamma \mapsto \frac{\omega(\gamma)}{(cz+d)^s}$ has the cocycle property.

Choose a holomorphic line bundle $L$ over $\Sigma$ of degree $1$. Tensoring $L$ if necessary by a line bundle
of degree $0$ we may assume that $\otimes^{2g-2} L$ is the canonical line bundle $K$ of holomorphic $1$-forms.
Now take the universal cover of $L$ to obtain $\tilde L$ over $\mathbb H$ which may be trivialised so that there
is an action of $\Gamma$ on $\mathbb H\times \mathbb C$ of the form 
$$\gamma(z,w)=(\gamma(z),\alpha(\gamma,z) w)$$
for some holomorphic cocycle $\alpha$. The $(2g-2)$th. power of $\alpha$ yields an action on
 $\mathbb H \times \mathbb C$ which is equivalent to the action yielding $K$, i.e. that coming from the
 cocycle $\displaystyle\frac{1}{(cz+d)^2}$. We conclude there is a nonvanishing holomorphic function $h(z)$ such that
 $$\alpha(\gamma,z)^{2g-2}=\frac{1}{(cz+d)^2}h(\gamma(z))h(z)^{-1}.$$
 Since $\mathbb H$ is simply connected choose for each $\gamma$ a branch of $\displaystyle h(\gamma(z))^{\frac{1}{2g-2}}$ to obtain
  $$\alpha(\gamma,z)=\omega(\gamma)\frac{1}{(cz+d)^s}h(\gamma(z))^{\frac{1}{2g-2}}(h(z)^{\frac{1}{2g-2}})^{-1}.$$
  for some $2(g-2)$th. roots of unity $\omega(\gamma)$.
 Since both $\alpha$ and $\displaystyle h(\gamma(z))^{\frac{1}{2g-2}}(h(z)^{\frac{1}{2g-2}})^{-1}$ are (holomorphic) cocycles,
 so is $\displaystyle\frac{\omega(\gamma)}{(cz+d)^s}$ which means $s\in ker(obstr)$.

\end{proof}
We would like to acknowledge a lot of help from Dan Freed with appendix A.

\section{An amusette: calculation of the algebra of modular forms.}

We will restrict our attention to \pslz\2though the method of this section surely applies in great generality. All the
results about modular forms in this section are extremely well known and elementary (\cite{zagier},\cite{serre}) but our derivation of them is somewhat different! The method should be applicable to a Fuchsian group provided there is an analogue of $\Delta$
(\cite{gannontriangle}). Relations in the algebra can be checked using zero sets.

\begin{lemma}\label{nocusp<12}
If $f$ is a cusp form of weight $p$  for \pslz then then $p\geq 12$.
\end{lemma}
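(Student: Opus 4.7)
The plan is a proof by contradiction using the multiplication-operator machinery of Proposition \ref{bounded} together with the dimension-counting underlying Theorem \ref{fixed}. Suppose $f$ is a nonzero cusp form of weight $p$. By \ref{bounded}, for every $s>1$ multiplication by $f$ gives a bounded, injective, $\Gamma$-equivariant operator $T_f \colon A^2_{s-2} \to A^2_{s+p-2}$. Since $p$ is an (even) integer, the projective cocycles of $\check\pi_s$ and $\check\pi_{s+p}$ coincide, so both Bergman spaces are modules over the same factor $M = vN_\omega(\Gamma)$, and $T_f$ is $M$-linear. Injectivity and Proposition following \ref{cuspbounded} force the closure $V = \overline{T_f A^2_{s-2}}$ to be an $M$-submodule of $A^2_{s+p-2}$ with $\dim_M V = \dim_M A^2_{s-2} = (s-1)/12$.

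Every function in $V$ vanishes, to at least the multiplicities of $f$, on the interior zero set of $f$. Hence by the dimension count behind Theorem \ref{fixed} applied in the codomain,
\[
\frac{s-1}{12} \;\leq\; \frac{s+p-1}{12} \;-\; \sum_i \frac{v_i}{|\mathrm{stab}_i|},
\]
which yields $p \geq 12 \sum_i v_i/|\mathrm{stab}_i|$. This captures the interior contribution to the classical valence formula. To finish, one must extract an extra $+1$ from the vanishing of $f$ at the cusp: for cusp forms like $\Delta$ with no interior zeros, the bound above only produces $p \geq 0$. My plan is to use the $q$-expansion $f(z) = \sum_{n\geq 1} a_n e^{2\pi i n z}$, which gives the decay $|f(z)| \leq C e^{-2\pi y}$ as $y \to \infty$, together with a Poincar\'e-series construction as in Lemma \ref{poincare}: average reproducing-kernel vectors $\varepsilon_{z_n} \in A^2_{s+p-2}$ over $\Gamma$ for a sequence $z_n$ with $y(z_n) \to \infty$ to produce a nonzero trace vector for $M$ (verified via the tracelike criterion of Corollary \ref{commutrace}) lying in $V^\perp$, since the pairing $\langle T_f\xi,\varepsilon_{z_n}\rangle = f(z_n)\xi(z_n)$ decays exponentially. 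This would upgrade the inequality to $p \geq 12(\sum_i v_i/|\mathrm{stab}_i| + 1) \geq 12$.

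The main obstacle is rigorously extracting this cusp-derived trace vector: the cusp is a boundary point of $\mathbb{H}$, not a $\Gamma$-orbit inside it, so Theorem \ref{fixed} does not apply directly. A natural fix is to pass to the Satake compactification $\overline{\mathbb{H}/\Gamma} = \mathbb{P}^1$ and reinterpret $T_f$ as producing sections of a line bundle vanishing at the added cusp point, after which an extra unit of $M$-codimension drops out of a Riemann--Roch-style count. Alternatively, one can produce the orthogonal trace vector directly as a weak limit of normalized $\Gamma$-averaged reproducing kernels $\sum_\gamma \pi_{s+p}(\gamma)\varepsilon_{z_n}$ and verify both the tracelike identity and orthogonality to $V$ using the exponential decay of $f$ near the cusp and the local uniform convergence of the Poincar\'e series in \ref{poincare}.
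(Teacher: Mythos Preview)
Your dimension-counting inequality $\frac{s-1}{12} \leq \frac{s+p-1}{12} - \sum_i \frac{v_i}{|\mathrm{stab}_i|}$ is fine, but as you yourself observe, it yields only $p \geq 12\sum_i v_i/|\mathrm{stab}_i|$, which is vacuous when $f$ has no interior zeros. Everything after that point is a plan, not a proof: the ``extra $+1$ from the cusp'' is never actually produced. The weak-limit construction you sketch is genuinely delicate---as $\mathrm{Im}(z_n)\to\infty$ the reproducing kernels $\varepsilon_{z_n}$ degenerate, the $\Gamma$-averages sit near the boundary of the fundamental domain where the Poincar\'e estimates of Lemma \ref{poincare} are weakest, and you would still have to verify that the putative limit is nonzero and tracelike. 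The Riemann--Roch alternative would require setting up a line-bundle framework foreign to the paper. So as written there is a real gap.

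The paper avoids the cusp entirely by a trick you are missing: instead of trying to squeeze value out of the vanishing of $f$ at $\infty$, it \emph{spends} that vanishing to manufacture an interior zero at a point of your choosing. Pick any $w\in\mathbb H$ not fixed by any element of $PSL_2(\mathbb Z)$ and set $g(z)=j(z)-j(w)$; then $fg$ is a holomorphic modular form of weight $p$ (the simple pole of $j$ at the cusp is killed by the cusp zero of $f$) which vanishes on the free orbit $\Gamma(w)$. Multiplying by a tiny power $\Delta^\epsilon$ restores enough decay at the cusp that $\Delta^\epsilon fg$ acts as a bounded multiplier $A^2_{s-2}\to A^2_{s-2+p+12\epsilon}$, producing a nonzero Bergman function vanishing on $\Gamma(w)$. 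Now Theorem \ref{ultimate} applies directly (no need for \ref{fixed}, no stabilisers, no cusp analysis) and forces $s+p+12\epsilon>13$; letting $s\downarrow 1$ and $\epsilon\downarrow 0$ gives $p\geq 12$. The moral: rather than building new machinery to handle the cusp, trade the cusp zero for an interior one via $j$.
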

\begin{proof} Choose a $w$ which is not a fixed point for \pslz. Then $g(z)=j(z)-j(w)$ vanishes on the orbit of $w$. 
And $fg$ is a modular form of weight $p$. Choosing a small $\epsilon>0$, $|\Delta^\epsilon fg|y^{\frac{p+12\epsilon}{2}}$ is
invariant under \pslz \2and bounded on a fundamental domain. Which gives 
$$|\Delta^\epsilon fg(z)|\leq (constant) y^{-\frac{p+12\epsilon}{2}}$$ So that for $\xi \in A^2_{s-2}$, 
$\Delta^\epsilon fg\xi\in A^2_{s-2+p+12\epsilon}$ and it vanishes on the orbit of $w$ which contradicts theorem \ref{ultimate}
if $s$ is close to $1$ and $\epsilon$ is small.
\end{proof}

\begin{corollary} There is no non-zero modular form of weight $2$.
\end{corollary}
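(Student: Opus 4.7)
The plan is to derive a contradiction by combining the automatic vanishing of any weight-$2$ modular form at the elliptic points of $\mathrm{PSL}_2(\mathbb Z)$ with the lower bound supplied by Theorem \ref{fixed}.

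First I would show that any non-zero weight-$2$ modular form $f$ for $\Gamma=\mathrm{PSL}_2(\mathbb Z)$ automatically vanishes at $i$ and at $\rho=e^{2\pi i/3}$. The stabilizer of $i$ contains $S:z\mapsto -1/z$, for which $cz+d=z$, so $f(i)=f(Si)=i^{2}f(i)=-f(i)$, hence $f(i)=0$. The stabilizer of $\rho$ is generated by an element with $c\rho+d=\rho+1=e^{i\pi/3}$, so $f(\rho)=(c\rho+d)^{2}f(\rho)=e^{2\pi i/3}f(\rho)$, and since $e^{2\pi i/3}\ne 1$ this forces $f(\rho)=0$.

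Next, exactly as in the proof of Lemma \ref{nocusp<12}, I would smooth $f$ into a ``cusp-form-like'' function of small fractional weight increment. Since $\Delta$ has no zeros on the simply connected $\mathbb H$, a branch of $\Delta^\epsilon$ is defined there for any $\epsilon>0$, and $\Delta^\epsilon f$ is holomorphic and transforms projectively with weight $2+12\epsilon$. The $\Gamma$-invariant quantity $|\Delta^\epsilon f(z)|\,y^{1+6\epsilon}$ is bounded on a fundamental domain: in the interior all factors are bounded, and near the cusp $|\Delta|^\epsilon\sim e^{-2\pi\epsilon y}$ dominates both $|f|\sim|a_0|$ and $y^{1+6\epsilon}$. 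So by the argument of Proposition \ref{bounded} (which uses only this growth bound and hence works for non-integer weight), multiplication by $\Delta^\epsilon f$ is bounded from $A^2_{s-2}$ to $A^2_{s+12\epsilon}$.

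Finally, for any $s>1$ pick a non-zero $\xi\in A^2_{s-2}$. The function $\Delta^\epsilon f\,\xi\in A^2_{s+12\epsilon}$ is non-zero and vanishes to order at least $1$ on the orbits $\Gamma(i)$ and $\Gamma(\rho)$. Applying Theorem \ref{fixed} with $v_i=v_\rho=1$, $|\mathrm{stab}_i|=2$, $|\mathrm{stab}_\rho|=3$, and noting $4\pi/\mathrm{covolume}(\Gamma)=12$, we would obtain
\[
s+12\epsilon+2\;>\;1+12\left(\tfrac12+\tfrac13\right)\;=\;11.
\]
But we may freely choose $s$ close to $1$ and $\epsilon$ close to $0$ (e.g.\ $s=2$, $\epsilon=10^{-2}$, giving $s+12\epsilon+2=4.12$), violating the inequality. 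The only real content is the first step: once we know a weight-$2$ form must vanish at both elliptic points, the weighted count $\tfrac12+\tfrac13=\tfrac56$ is already large enough for Theorem \ref{fixed} to finish the argument with enormous room to spare.
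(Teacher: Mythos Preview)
Your proof is correct and takes a genuinely different route from the paper's. The paper argues by a case split: if $f$ has a zero somewhere it invokes the $\Delta^\epsilon$ trick together with Theorem~\ref{ultimate} to reach a contradiction, while if $f$ is nowhere zero it forms the quotient $\Delta/f$, which is then a cusp form of weight $10<12$, contradicting Lemma~\ref{nocusp<12}. You instead begin with the elementary (and classical) observation that the transformation law for weight~$2$ \emph{forces} $f(i)=f(\rho)=0$, so the ``nowhere-vanishing'' case simply never arises and no case split is needed; you then feed both elliptic orbits simultaneously into Theorem~\ref{fixed}. Your approach trades the $\Delta/f$ trick for the more refined Theorem~\ref{fixed} (with its stabilizer weights $\tfrac12+\tfrac13$), whereas the paper gets by with the simpler Theorem~\ref{ultimate} at the cost of the extra case. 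As a bonus, your argument is robust against the minor gap in the paper's case~1: Theorem~\ref{ultimate} as stated requires the orbit to contain no fixed points, so if the only zeros of $f$ happened to lie on elliptic orbits the paper would tacitly need Theorem~\ref{fixed} anyway---your version handles this from the outset.
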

\begin{proof}
Suppose $f$ were such a modular form. If $f$ has a zero, then multiplying it by a small positive power of $\Delta$
times a vector $\xi$ as in lemma \ref{nocusp<12} gives an $A^2$ function vanishing on an orbit which contradicts
\ref{ultimate}.

If $f$ vanishes nowhere, one may form $\displaystyle\frac{\Delta}{f}$ which is a cusp form of weight less than $12$, disallowed
by \ref{nocusp<12}.
\end{proof}
Given the above and the Eisenstein series it is not hard to determine the whole algebra of modular forms. Uniqueness
of the modular form of weight 12 is given by dividing by $\Delta$ and the maximum modulus theorem, as usual \cite{zagier}.
For weights $p=4,6,8$ and $10$ just subtract the appropriate multiple of $G_{p\over 2}$ to obtain a cusp form
which must be zero by \ref{nocusp<12}. 

It is now routine to obtain the whole algebra of modular forms since
multiplication by $\Delta$ is clearly an injection of modular forms of weight $p$ onto cusp forms of weight $p+12$ 
and subtracting the appropriate multiple of the Eisenstein series gives a cusp form.
We conclude that the algebra of cusp forms is a graded commutative algebra freely generated by $G_2$ in
degree $4$ and $G_3$ in degree $6$. (See \cite{serre}.)


\begin{thebibliography}{99}
\bibitem{ake}
Akemann, C.(1981)
Operator algebras associated with Fuchsian groups.
{\em Houston J. Math.},
 {\bf 7}, 295--301.

\bibitem{AH}
Asaeda, M. and Haagerup, U. (1999).
Exotic subfactors of finite depth with Jones indices
${(5+\sqrt{13})}/{2}$ and ${(5+\sqrt{17})}/{2}$.
{\em Communications in Mathematical Physics},
{\bf 202}, 1--63.

\bibitem{a}
Atiyah, M. (1976)
Elliptic operators, discrete groups and von Neumann algebras.
{\em Asterisque},
{\bf 32/33}, 43--72.

\bibitem{as}
Atiyah, M. and Schmidt, W. (1977)
A geometric construction of the discrete series for semisimple Lie groups.
{\em Inventiones Mathematicae}
{\bf 42}, 1--62.

\bibitem{barg}
Bargman, V. (1947)
Irreducible Unitary representations of the Lorentz group.
{\em Ann. Math}
{\bf 48}568--640
\bibitem{bers}
Bers, L. (1965)
Automorphic Forms and Poincare Series for Infinitely Generated Fuchsian Groups
{\em American Journal of Mathematics}
{\bf 87} 196--214




\bibitem{bc}Bost, J.-B. and Connes, A. (1995)
 {\em Hecke algbebras, type III factors and
phase transitions with spontaneous symmetry breaking in number
theory},
{\bf Selecta Math. (New Series),1}, 411--457.

\bibitem{connci}
Connes, A. (1978)
Sur la th\'eorie non-commutative de l'int\'egration.

\bibitem{C20}
Connes, A. (1994).
Noncommutative geometry.
{\em Academic Press}.

\bibitem{dixmier}
Dixmier, J.,
Les alg\`ebres d'op\'erateurs dans l'espace hilbertien (alg\`ebres de von Neumann). (1957)
{\em Gauthier Villars, Paris}


\bibitem{duren}
Duren, P. (1970)
Theory of $H^p$ spaces.
{\em Academic Press}


\bibitem{gannontriangle}
Doran, C., Gannon, T.,  Movasati, H. and Shokri, K. (2013)
{\em Communications in
Number theory and Physics}
{\bf 7} 689Ð737



\bibitem{ek}D. Evans and Y. Kawahigashi, ``Quantum symmetries on operator algebras'', Oxford University Press (1998).

\bibitem{fulton}W. Fulton,(1995),
 Algebraic Topology: A First Course,
 {\em Graduate texts in mathematics}
  Springer,

\bibitem{ghj}F.M. Goodman, P.~{de la} Harpe, and V.F.R. Jones,(1989)
\emph{Coxeter graphs and  towers of algebras}, Springer-Verlag,



\bibitem{H5}
Haagerup, U. (1994).
Principal graphs of subfactors in the index range 
$4< 3+\sqrt2$. in {\em Subfactors ---
Proceedings of the Taniguchi Symposium, Katata ---},
(ed. H. Araki, et al.),
World Scientific, 1--38.

\bibitem{hkz}
Hedenmalm, H. Korenblum, B, Zhu, K. (2000).
Theory of Bergman Spaces,
Graduate texts in Mathematics, Springer-Verlag, volume 199

\bibitem{HKS}
Hoare, A., Karrass, A. and Solitar, D.(1972)
Subgroups of infinite index in Fuchsian groups.
{\em Math Zeitschrift}
{\bf 125} 59Ð-69 

\bibitem{horowitz}
Horowitz, C. (1974)
Zeros of functions in the Bergman spaces.
{\em Duke Math. Journal}
{\bf 41} 693--710

\bibitem{jo1}Jones,V. (1983)
Index for subfactors, 
{\em Invent. Math.} 
{\bf 72}  1--25.


\bibitem{J22}
Jones, V. F. R.
In and around the origin of quantum groups.
{\em Prospects in mathematical physics.} 
 {\em Contemp. Math., 437    Amer. Math. Soc.}  (2007) 101--126.
math.OA/0309199.


\bibitem{jo3} Jones,V.
 The annular structure of subfactors, 
in ``Essays on geometry and related topics'', {\em Monogr. Enseign. Math.} {\bf 38} (2001), 401--463.

\bibitem{jonesnotes}
 Jones,V, (2009)
  Von Neumann Algebras.
{\em https://math.berkeley.edu/ vfr/VonNeumann2009.pdf}

\bibitem{JMS}
Jones, V. Morrison, S. and Snyder, N. (2013).
The classification of subfactors of index $\leq 5$.
To appear.

\bibitem{keating}
Keating J. (2006)
 RANDOM MATRICES AND NUMBER THEORY. 
 {\em Br\'ezin \'E., Kazakov V., Serban D., Wiegmann P., Zabrodin A. (eds) Applications of Random Matrices in Physics. NATO Science Series II: Mathematics, Physics and Chemistry}
 {\bf  221}

\bibitem{mvn1}
Murray, F. and von Neumann, J.(1936)
On rings of operators.
{\em Ann. Math. }
{\bf 37} 116-229

\bibitem{petersson} Petersson, H. (1954)
\"Uber automorphe Orthogonalfunktionen und die Konstruktion der automorphen Formen von positiver reeller Dimension, 
{\em Math. Ann.}
{\bf 127}, 33--81. 

\bibitem{poincare}
Poincar\'e, H. (1882)
Memoire sur les fonctions Fuchsiennes.
{\em Acta Math.}, 
{Volume 1}  193--294.

\bibitem{radulescu1}
Radulescu, F. (1998)
On the $\Gamma$-equivariant form of the Berezin's quantization of the upper half plane.
{\em Memoirs of the American Mathematical Society}
{\bf 133} 1--126

\bibitem{radulescu2} 
Radulescu, F. (1994)
On the von Neumann algebra of Toeplitz operators
with automorphic symbol.
{\em Subfactors,Proceedings of the Taniguchi Symposium on
Operator Algebras, edts H. Araki, Y. Kawahigashi, H. Kosaki, World Scientific
Singapore-New Jersey} 268--273

\bibitem{RS} M. Reed, B. Simon, (1972). Methods of Modern Mathematical Physics: Functional Analysis vol 1 (New York: Academic).


\bibitem{RW}
Rolen, L. and Wagner, I. (2019)
Private communication.

\bibitem{rolfsenwiest}
Rolfsen, D. and Wiest, B. (2001)
Free group automorphisms, invariant orderings
and topological applications.
{\em Algebraic and Geometric Topology}
{\bf 1}  311--319 

\bibitem{serre}
Serre, J-P.[]
A Course in Arithmetic 

\bibitem{takesaki}M. Takesaki, \emph{Theory of Operator Algebras}.
EMS series on Operator Algebras, 2002.

\bibitem{voic}
Voiculescu,D. (1991)
Limit laws for random matrices and free products.
{\em Invent. Math. }
{\bf 104} 201--220

\bibitem{vNdirect}
von Neumann, J. (1939)
 On infinite direct products. 
{\em Compositio Mathematica,}
{\bf  6 } 1--77

\bibitem{zagier}
Zagier, D. (1991)
 Modular forms of one variable.
 {\em Notes based on a course given in Utrecht} 
http://people.mpim-bonn.mpg.de/zagier/files/tex/UtrechtLectures/UtBook.pdf.

\bibitem{zhu}
Zhu, K. (2004)
Translating Inequalities between Hardy and Bergman Spaces.
{\em American Mathematical Monthly }
{\bf 111} 520--525

\end{thebibliography}
\end{document}